\def\thm@space@setup{%
  \thm@preskip=\parskip \thm@postskip=0pt
}
\newtheorem{coco}{Corollary}
\newtheorem{Theorem}{Theorem}
\newtheorem{prp}{Proposition}
\newtheorem{Exa}{Example}
\author[Oussama Hamza]{Oussama Hamza}
\address{Institute for Advanced Studies in Mathematics, Harbin Institute of Technology, Harbin, China 150001}
\email{ohamza3@uwo.ca}
\title{Filtrations and cohomology on graph products}
\subjclass{20F05, 20F14, 20F40, 16S10, 17A45}
\keywords{Presentations of (pro-$p$) groups, graded and filtered algebras, the Zassenhaus and the lower central filtrations, cohomology, graph products}
\thanks{The author thanks Thomas Weigel for discussions on graphs, group filtrations, his interest on this work and several references. He is grateful to Henrique Souza for discussions on strict filtered maps. He also thanks Li Cai for several comments. The author acknowledges Donghyeok Lim for several discussions, readings, and his interest in this work. He is thankful to Christian Maire, J{\'a}n Min{\'a}{\v c}, Nguy{\^e}n Duy T{\^a}n, Jun Wang, Heon Lee, Yuanyuan Jing, Zhizheng Yu and Keping Huang for discussions and their support. He is also grateful to Elyes Boughattas, Baptiste Cercle, Sayantan Roy Chowdhury, Tao Gong, Vladimir Gorchakov, Olivier Wittenberg, Prajwal Udanshive, Matthias Franz, Larry So, Steven Amelotte, Karl Lorensen and Taras Panov for their interests in this work.}
\newcommand{\Q}{\mathbb{Q}}
\newcommand{\F}{\mathbb{F}}
\newcommand{\Z}{\mathbb{Z}}
\newcommand{\NN}{\mathbb{N}}
\def\N{{\rm N}}
\def\P{\mathbb{P}}
\def\log{{\rm log}}
\def\grad{{\rm Grad}}
  \DeclareFontFamily{U}{wncy}{}
    \DeclareFontShape{U}{wncy}{m}{n}{<->wncyr10}{}
    \DeclareSymbolFont{mcy}{U}{wncy}{m}{n}
    \DeclareMathSymbol{\Sh}{\mathord}{mcy}{"58} 
\def\bX{{\mathbf{X}}}
\def\bE{{\mathbf{E}}}
\def\Rr{{\mathcal R}}
\def\PP{{\mathcal P}}
\def\J{{\mathcal J}}
\def\E{{\mathcal E}}
\def\Ll{{\mathcal L}}
\def\U{{\mathcal U}}
\def\A{{\mathcal A}}
\def\AA{{\mathbb A}}
\def\rk{{\rm rank}}
\def\I{{\mathcal I}}
\def\fq{{\mathbb F}}
\begin{document}

\begin{abstract}
Let~$p$ be a prime. We resolve a question posed by Min{\'a}{\v c}–Rogelstad–Tân. We relate the Zassenhaus and the lower central series of pro-$p$ groups under a torsion-freeness condition.

We also study graph products of (pro-$p$) groups under natural assumptions. In particular, we compute their graded Lie algebras associated with the previous filtrations, as well as their cohomology over~$\mathbb{F}_p$. Our approach relies on various filtrations of amalgamated products, as studied in Leoni's PhD thesis. Explicit examples are provided using the Koszul property. 

As a concrete application, we compute the cohomology over~$\mathbb{F}_p$ and the graded Lie algebras associated with the filtrations of graph products of fundamental groups of surfaces. These groups furnish new examples satisfying the torsion-freeness condition, which arises in the question of Mináč–Rogelstad–Tân.
\end{abstract}

\maketitle

Right Angled Artin group (RAAG) theory associates a graph to a group. It plays a fundamental role in geometric group theory and combinatorics. For further details, consult~\cite{charneysurvey} and~\cite{bartholdi2020right}. In recent years, this theory also developed several applications for absolute Galois groups. Snopce and Zalesskii~\cite{snopce2022right} have precisely determined which RAAGs are realized as maximal pro-$p$ quotients of absolute Galois groups.
These results were later extended by Blumer, Quadrelli and Weigel \cite{blumer2023oriented}. Recently the author~\cite{hamza2025pythagorean} together with Maire, Min{\'a}{\v c} and Tân used RAAG theory to give presentations of maximal pro-$2$ quotients of absolute Galois groups of Formally Real Pythagorean fields of finite type. This result yielded new examples of pro-$2$ groups which are not maximal pro-$2$ quotients of absolute Galois groups.

Graph products, originally introduced in \cite{ruthgraph}, generalize RAAGs. With their filtrations and cohomology rings, they play a fundamental role in geometric group theory. We refer to~\cite{panovpseries} and~\cite[Chapter~$4$]{panovbook2015}. Concretely, we fix an undirected graph~$\Gamma\coloneq (\bX,\bE)$ on~$\bX\coloneq \{1,\dots ,k\}$ and a family of (pro-$p$) groups $G\coloneq \{{}_1G,\dots,{}_kG\}$. Note that the category of (pro-$p$) groups is endowed with a coproduct $\coprod$. Then, we define the graph product~$\Gamma G$ as the quotient of~$\coprod_{i=1}^k {}_iG$ by the normal (closed) subgroup generated by~$\lbrack {}_iG,{}_jG \rbrack$ where $\{i,j\}\in \bE$. 

In this paper, we study filtrations and cohomology groups of graph products under a torsion-freeness condition. This condition is motivated by a question of Mináč–Rogelstad–Tân \cite[Question~$2.13$]{MRT}, which we answer fully and affirmatively. The same torsion-freeness condition was also studied independently by Marmo–Riley–Weigel \cite{WeigelRiley}, where it is referred to as~$\gamma$-free. More precisely, \cite[Theorem B]{WeigelRiley} relates the Zassenhaus and lower central filtrations under this condition (see also Remark~\ref{Marmo-Riley-Weigel}). At present, few examples satisfying this torsion-freeness condition are known. In this work, we show that graph products provide several new examples derived from previously known ones.

\subsection*{Filtrations on groups and the question of~Min{\'a}{\v c}-Rogelstad-Tân} In general, we denote by $Q$ a finitely generated pro-$p$ group. Let~$\AA$ be either $\F_p$, the field with $p$ elements, or $\Z_p$, the ring of~$p$-adic integers. If~$A$ and $B$ are two subgroups of~$Q$, we define~$\lbrack A, B\rbrack$ the closed normal subgroup of~$Q$ generated by~$\lbrack a,b\rbrack\coloneq a^{-1}b^{-1}ab$. We also define~$AB$ the closed normal subgroup of~$Q$ generated by~$ab$ with $a\in A$ and $b\in B$. Consider~$Q_1(\AA):=Q$, and for every integer~$n\geq 2$:
\begin{equation}\label{def filt}
Q_n(\Z_p):=[Q,Q_{n-1}(\Z_p)], \quad \text{and} \quad Q_n(\F_p):=Q_{\lceil \frac{n}{p} \rceil}(\F_p)^p\prod_{i+j=n}[Q_i(\F_p),Q_j(\F_p)],
\end{equation}
where~$\lceil \frac{n}{p} \rceil$ is the smallest integer greater than or equal~$\frac{n}{p}$. The filtration $Q_\bullet(\F_p)$ is called the Zassenhaus filtration of $Q$, while $Q_\bullet(\Z_p)$ is the (topological) lower central series of~$Q$. For every positive integer~$n$, these two filtrations are related by the Lazard formula: 
$$Q_n(\F_p)=\prod_{ip^j\geq n}Q_i(\Z_p)^{p^j}.$$
See for instance~\cite[Theorem $11.2$]{DDMS} for a proof.

From these filtrations, we construct the $\AA$-graded (locally finite) Lie algebras:
\begin{multline*}
\Ll(\AA,Q):=\bigoplus_n \Ll_n(\AA,Q), \quad \text{where } \\\Ll_n(\AA,Q):=Q_n(\AA)/Q_{n+1}(\AA), \text{ and } \quad a_n(\AA,Q):=\rk_{\AA} \Ll_n(\AA,Q).
\end{multline*}
In this paper, we are mostly interested in groups $Q$ such that $\Ll(\AA, Q)$ is torsion-free over $\AA$. Although this condition is automatic when $\AA:=\F_p$, it is generally difficult to verify when $\AA:=\Z_p$. Our main example follows from Labute \cite[Theorem]{labutedescending} (see also \cite[Remark $2.12$]{MRT}). We also refer to~\cite{WeigelRiley} for several other examples. We consider~$\widehat{S_g}^p$, the pro-$p$ completion of the fundamental group of a surface of genus $g$, given by the presentation:
\begin{equation}\tag{$\P_g$}\label{example pres}
 \langle x_1,\dots x_g, y_1, \dots, y_g \mid \prod_{j=1}^{g} \lbrack x_j,y_j\rbrack=1 \rangle.
 \end{equation}

Similarly to~\cite[Theorem]{labutedescending}, we have an explicit formula for $a_n(\AA,\widehat{S_g}^p)$ depending only on~$p$,~$n$ and~$g$. Let~$n$ be a positive integer, and let us write~$n\coloneq mp^{\nu_p(n)}$, where~$\nu_p(n)$ is the $p$-adic valuation of $n$, and $m$ and $p$ are coprime. As a consequence of~\cite[Lemma~$5.4$ and Theorem~$2.9$]{MRT}, we have the relation:
\begin{equation*}
a_n(\F_p,\widehat{S_g}^p)=a_m(\Z_p,\widehat{S_g}^p)+a_{mp}(\Z_p,\widehat{S_g}^p)+a_{mp^2}(\Z_p,\widehat{S_g}^p)+\dots + a_n(\Z_p,\widehat{S_g}^p).
\end{equation*}

Assume that~$Q$ is a finitely generated pro-$p$ group such that~$\Ll(\Z_p,Q)$ is torsion-free. Min{\'a}{\v c}-Rogelstad-T{\^ a}n \cite[Question $2.13$]{MRT} asked whether the above equality holds for~$Q$. In~\cite[Theorem $3.5$]{HAMZA2023172}, the author gave a partial answer. In this paper, we give a complete answer:

\begin{Theorem}\label{answerminac}
Assume that $\Ll(\Z_p,Q)$ is torsion-free. Let~$n$ be a positive integer, and let us write~$n\coloneq mp^{\nu_p(n)}$, with $m$ coprime to $p$. Then
\begin{equation}\label{equation minac}
a_n(\F_p,Q)=a_m(\Z_p,Q)+a_{mp}(\Z_p,Q)+a_{mp^2}(\Z_p,Q)+\dots + a_n(\Z_p,Q).
\end{equation}
\end{Theorem}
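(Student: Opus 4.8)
The plan is to work inside the completed group algebra $\F_p[[Q]]$ and to compute, in two different ways, the Hilbert series $E(t):=\sum_{n\ge 0}\bigl(\dim_{\F_p} I^n/I^{n+1}\bigr)t^n$, where $I$ is the augmentation ideal of $\F_p[[Q]]$; throughout I write $a_n:=a_n(\Z_p,Q)$ and $b_n:=a_n(\F_p,Q)$. The first computation is unconditional: the Jennings--Lazard--Zassenhaus theorem gives $Q_n(\F_p)=Q\cap(1+I^n)$ together with a graded $\F_p$-algebra isomorphism $\mathrm{gr}\,\F_p[[Q]]\cong u\bigl(\Ll(\F_p,Q)\bigr)$ onto the restricted universal enveloping algebra (see e.g.\ \cite{DDMS}), so the restricted Poincar\'e--Birkhoff--Witt theorem gives
\[
E(t)=\prod_{n\ge 1}\Bigl(\frac{1-t^{pn}}{1-t^{n}}\Bigr)^{b_n}.
\]
The second computation is where torsion-freeness will be used: I would show $E(t)=\prod_{n\ge 1}(1-t^n)^{-a_n}$. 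Granting both, one equates the two products. A formal power series with constant term $1$ factors uniquely as $\prod_{n\ge1}(1-t^n)^{c_n}$ with $c_n\in\Z$ (the exponents are recovered inductively from the coefficients of $t,t^2,\dots$, all products converging in $\Z[[t]]$), so matching the exponent of $(1-t^n)$ on the two sides gives $b_n=a_n$ when $p\nmid n$ and $b_n=a_n+b_{n/p}$ when $p\mid n$. Iterating along $n,\ n/p,\ \dots,\ m=n/p^{\nu_p(n)}$ then yields exactly \eqref{equation minac}.

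It thus remains to establish $E(t)=\prod_{n}(1-t^n)^{-a_n}$, equivalently that the associated graded $\mathrm{gr}\,\Z_p[[Q]]$ of $\Z_p[[Q]]$ along its augmentation ideal $J$ is $\Z_p$-torsion-free with graded ranks equal to the coefficients of $\prod_k(1-t^k)^{-a_k}$. The key map is the natural degree-preserving $\Z_p$-algebra homomorphism $\psi\colon U_{\Z_p}\bigl(\Ll(\Z_p,Q)\bigr)\to\mathrm{gr}\,\Z_p[[Q]]$ sending the class of $q\in Q_1(\Z_p)$ to that of $q-1$ in $J/J^2$. It is surjective, because $\mathrm{gr}\,\Z_p[[Q]]$ is generated in degree one and $\psi$ is onto in degree one. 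Since $Q$ is topologically finitely generated, each $\Ll_n(\Z_p,Q)$ is a finitely generated $\Z_p$-module, hence free (torsion-free over a PID), so ordinary PBW makes $U_{\Z_p}\bigl(\Ll(\Z_p,Q)\bigr)$ a free $\Z_p$-module whose Hilbert series is $\prod_k(1-t^k)^{-a_k}$; in particular it embeds into its base change to $\Q_p$. On the other side, the rational form of Quillen's theorem on associated graded group rings identifies $\mathrm{gr}\,\Z_p[[Q]]\otimes_{\Z_p}\Q_p$ with $U_{\Q_p}\bigl(\Ll(\Z_p,Q)\otimes\Q_p\bigr)$ (cf.\ \cite{MRT}), so $\psi\otimes\Q_p$ is an isomorphism; hence $\psi$, the restriction of an injective map to a $\Z_p$-submodule, is itself injective, and therefore an isomorphism. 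Thus $\mathrm{gr}\,\Z_p[[Q]]$ is $\Z_p$-free with the asserted Hilbert series; being torsion-free (equivalently, $p$ acts injectively on it), the formation of the associated graded commutes with reduction modulo $p$, so $I^n/I^{n+1}\cong\bigl(J^n/J^{n+1}\bigr)\otimes_{\Z_p}\F_p$ and $\dim_{\F_p}I^n/I^{n+1}=\rk_{\Z_p}J^n/J^{n+1}$, which is the coefficient of $t^n$ in $\prod_k(1-t^k)^{-a_k}$. This gives the second identity.

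The main obstacle is the injectivity of $\psi$: the surjection from an enveloping algebra onto $\mathrm{gr}\,\Z_p[[Q]]$ is in general very far from injective — it fails already for $Q=\Z/p$, where $\Ll(\Z_p,Q)\cong\Z/p$ has torsion and \eqref{equation minac} itself is false — and torsion-freeness is invoked exactly to make the source of $\psi$ a \emph{free} $\Z_p$-module (so that injectivity may be checked after inverting $p$) and again to commute $\mathrm{gr}$ with mod-$p$ reduction. Equivalently, the content is the structural statement that under the hypothesis $\Ll(\F_p,Q)$ is isomorphic, as a restricted Lie algebra, to $\bigoplus_{j\ge0}\bigl(\Ll(\Z_p,Q)\otimes\F_p\bigr)^{(j)}$, the sum of the iterated Frobenius twists of $\Ll(\Z_p,Q)\otimes\F_p$ with the $p$-operation realising the shift $j\mapsto j+1$; \eqref{equation minac} is then immediate on graded dimensions, and a telescoping product recovers the first Hilbert series identity from the second. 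For context, the inequality $b_n\le\sum_{j=0}^{\nu_p(n)}a_{n/p^j}$ alone already follows directly from Lazard's formula $Q_n(\F_p)=\prod_{ip^j\ge n}Q_i(\Z_p)^{p^j}$, which exhibits a surjection $\bigoplus_{j=0}^{\nu_p(n)}\bigl(\Ll_{n/p^j}(\Z_p,Q)\otimes\F_p\bigr)\twoheadrightarrow\Ll_n(\F_p,Q)$; but proving that surjection injective seems to need the very same separation that the group-algebra argument provides.
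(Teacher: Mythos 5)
Your proof is correct, and its skeleton coincides with the paper's: both arguments reduce Theorem~\ref{answerminac} to the freeness of $\mathrm{gr}\,\Z_p[[Q]]\simeq\U(\Ll(\Z_p,Q))$ together with the identification of its reduction modulo $p$ with $\mathrm{gr}\,\F_p[[Q]]$ --- that is, to the equality $c_n(\F_p,Q)=c_n(\Z_p,Q)$ of Theorem~\ref{answerminac2} --- and then compare the ordinary and restricted PBW Hilbert series. The differences lie in how that key step is established. You prove injectivity of $\U(\Ll(\Z_p,Q))\to\mathrm{gr}\,\Z_p[[Q]]$ by base change to $\Q_p$ and Quillen's rational theorem, using torsion-freeness only to embed the source into its rationalization; the paper instead quotes \cite{hartlfox} for $\U(\Ll(\Z_p,Q))\simeq\E(\Z_p,Q)$ (Lemma~\ref{PBW torsion freeness}) and obtains the mod-$p$ comparison by a diagram chase with strictly filtered maps through the free pro-$p$ group and the relation ideal (Lemmas~\ref{strictfiltr} and~\ref{surj}, Theorem~\ref{endanswermin}), concluding with the count $c_n(\AA,F)=i_n(\AA)+c_n(\AA,Q)$. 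Your route is more self-contained --- it is essentially the mechanism the paper itself deploys later for Theorem~\ref{main theo Zp} via Lemma~\ref{torsion and Q} --- whereas the paper's detour additionally yields $i_n(\F_p)=i_n(\Z_p)$ (Remark~\ref{kernel rema}), which is reused in Proposition~\ref{ast and graph}. Your final combinatorial step (unique factorization of Hilbert series as $\prod_n(1-t^n)^{c_n}$, giving $a_n(\F_p,Q)=a_n(\Z_p,Q)$ for $p\nmid n$ and $a_n(\F_p,Q)=a_n(\Z_p,Q)+a_{n/p}(\F_p,Q)$ otherwise) replaces the citation of~\eqref{equation minac2} from \cite{MRT} and is equivalent to it. The one terse spot is the assertion that torsion-freeness makes $\mathrm{gr}$ commute with reduction modulo $p$: besides torsion-freeness of each $J^n/J^{n+1}$ one needs the separatedness $\bigcap_n J^n=0$ to conclude $J^n\cap p\Z_p[[Q]]=pJ^n$; this holds for finitely generated pro-$p$ groups and is exactly the content of Lemma~\ref{surj}.
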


The proof of Theorem~\ref{answerminac} is presented in the first section. It is based on Theorem~\ref{answerminac2}, which relies on strict filtered maps. These maps were well studied by Leoni~\cite{leoni2024zassenhaus}. Let us note that Marmo, Riley and Weigel~\cite[Theorem~$B$]{WeigelRiley} allows us to deduce an alternative proof of Theorem~\ref{answerminac2} (see Remark~\ref{Marmo-Riley-Weigel}).

\subsection*{Results on filtrations of graph products} We now use graph product theory to obtain new examples of pro-$p$ groups $Q$ such that~$\Ll(\Z_p,Q)$ is torsion-free. 

From a collection $G:=\{{}_1G,\dots, {}_kG\}$ of finitely generated pro-$p$ groups, we obtain a collection of~$\AA$-graded Lie algebras~$\Ll(\AA,G):=\{\Ll(\AA, {}_1G), \dots, \Ll(\AA, {}_k G)\}$. In this paper, we consider families $G$ satisfying the property:
\begin{equation}\tag{$\ast_{\AA}$}\label{condA} 
\text{For every $1\leq i \leq k$, the algebra $\Ll(\AA, {}_iG)$ is torsion-free over $\AA$.}
\end{equation}
This condition is automatic over~$\F_p$, it is introduced only to standardize the notations.
Since the category of graded algebras also admits a coproduct (that we also denote by~$\coprod$), we define the graph product:
\begin{equation*}
\begin{aligned}
 \Gamma \Ll(\AA,G)&\coloneq \coprod_{i=1}^k \Ll(\AA,{}_iG)/ \langle [\Ll(\AA,{}_uG),\Ll(\AA,{}_vG)]|\quad \{u,v\}\in \bE \rangle.
\end{aligned}
\end{equation*}
This paper is devoted to the study of filtrations on $\Gamma G$. Precisely, we show:

\begin{Theorem}\label{main result}
Let $G$ be a family of finitely generated pro-$p$ groups. Assume that $G$ satisfies \eqref{condA}, then $(i)$ the algebra $\Ll(\AA, \Gamma G)$ is torsion-free, and
$(ii)$ we obtain the equality
$$\Ll(\AA, \Gamma G)\simeq \Gamma \Ll(\AA,G).$$ 
\end{Theorem}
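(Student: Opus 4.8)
The strategy is to reduce the graph product to an iterated amalgamated (free) product and apply Leoni's theory of strict filtered maps, proceeding by induction on the number of vertices $k$. The key structural fact is that if $j\in\bX$ is a vertex and $\Gamma'$ is the full subgraph on $\bX\setminus\{j\}$, with $\mathrm{lk}(j)$ the link of $j$ and $\mathrm{st}(j)=\mathrm{lk}(j)\cup\{j\}$, then $\Gamma G$ decomposes as the amalgamated product of $\Gamma' G$ and $(\mathrm{st}(j))G$ over the common retract $(\mathrm{lk}(j))G$. The first step is therefore to establish (or cite) this decomposition in the pro-$p$ category, together with the observation that $(\mathrm{st}(j))G \simeq {}_jG \times (\mathrm{lk}(j))G$, whose graded Lie algebra, by the torsion-free hypothesis \eqref{condA} on ${}_jG$ and the inductive hypothesis applied to $(\mathrm{lk}(j))G$, is $\Ll(\AA,{}_jG)\oplus\Ll(\AA,(\mathrm{lk}(j))G)$ and is torsion-free.

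The second step is the analysis of the amalgamated product. Here I would invoke the results from Leoni's thesis (cited as \cite{leoni2024zassenhaus}) on filtrations of amalgamated products: when the inclusions of the amalgamating subgroup $(\mathrm{lk}(j))G$ into each factor are \emph{strict} with respect to the relevant filtration ($\F_p$-Zassenhaus or $\Z_p$-lower central), the graded Lie algebra of the amalgam is the pushout (in graded Lie algebras) of the graded Lie algebras of the factors over that of the amalgamated subgroup, and torsion-freeness is preserved. The inclusion $(\mathrm{lk}(j))G \hookrightarrow \Gamma'G$ being strict follows from the inductive hypothesis (the link subgraph is a full subgraph, and one checks the graded pieces inject as a direct summand), and $(\mathrm{lk}(j))G \hookrightarrow (\mathrm{st}(j))G$ is strict because it is a direct factor. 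Combining, $\Ll(\AA,\Gamma G)$ is the pushout $\Ll(\AA,\Gamma'G) *_{\Ll(\AA,(\mathrm{lk}(j))G)} \Ll(\AA,(\mathrm{st}(j))G)$ and is torsion-free, proving $(i)$.

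For $(ii)$, I would show that this Lie-algebra pushout is exactly $\Gamma\Ll(\AA,G)$. On the one hand, the canonical maps $\Ll(\AA,{}_iG)\to \Ll(\AA,\Gamma G)$ kill all brackets $[\Ll(\AA,{}_uG),\Ll(\AA,{}_vG)]$ for $\{u,v\}\in\bE$, since these brackets already vanish in $\Gamma G$; hence there is a surjection $\Gamma\Ll(\AA,G)\twoheadrightarrow \Ll(\AA,\Gamma G)$. On the other hand, the same amalgam/pushout decomposition, carried out on the combinatorially-defined side, shows $\Gamma\Ll(\AA,G) \simeq \Gamma'\Ll(\AA,G) *_{(\mathrm{lk}(j))\Ll(\AA,G)} (\mathrm{st}(j))\Ll(\AA,G)$ (the coproduct-with-relations construction for graph products of Lie algebras satisfies the same vertex-deletion recursion), and by induction the outer two terms agree with $\Ll(\AA,\Gamma'G)$ and $\Ll(\AA,(\mathrm{st}(j))G)$. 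Since pushouts are unique, the surjection is an isomorphism. I expect the main obstacle to be the verification that the relevant inclusions of subgroups into the factors are genuinely \emph{strict filtered maps} — this is exactly the delicate point that the earlier parts of the paper (Theorems \ref{answerminac} and \ref{answerminac2}) are set up to handle, and getting the base case of the induction and the compatibility of the link subgroup's filtration to line up with Leoni's hypotheses is where the real work lies; the formal pushout bookkeeping in $(ii)$ should then be routine.
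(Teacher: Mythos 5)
Your overall architecture --- induction on the number of vertices, a decomposition of $\Gamma G$ as an amalgamated product of graph products over smaller subgraphs, strictness of the embeddings of the amalgamating subgroup obtained from the retraction onto a full subgraph, and Leoni's theorem on filtrations of amalgams --- matches the paper's strategy for the field case. (The paper decomposes along two non-adjacent vertices $a,b$, writing $\Gamma G$ as the amalgam of the graph products over $\bX\setminus\{a\}$ and $\bX\setminus\{b\}$ over the one on $\bX\setminus\{a,b\}$, rather than your star/link decomposition at a single vertex; both recursions are legitimate and both bottom out at the complete-graph case, where the graph product is a direct product.)

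The genuine gap is in the case $\AA=\Z_p$. You assert that when the embeddings are strict, ``the graded Lie algebra of the amalgam is the pushout \dots\ and torsion-freeness is preserved.'' Leoni's theorem, as it is available, concerns the Zassenhaus filtration, i.e.\ graded Lie algebras over the field $\F_p$; the paper extends it only to the fraction field $\Q_p$, not to $\Z_p$. There is no amalgam theorem over $\Z_p$ to invoke, and torsion-freeness of $\Ll(\Z_p,\Gamma G)$ is precisely the hard content of part $(i)$ --- it cannot be read off from the pushout description. The paper's route is: first prove $\Ll(\Q_p,\Gamma G)\simeq\Gamma\Ll(\Q_p,G)$ via the amalgam argument over the field $\Q_p$; separately prove that the \emph{combinatorial} side $\Gamma\Ll(\Z_p,G)$ is torsion-free (Proposition~\ref{ast and graph}, a delicate argument lifting $\F_p$-bases of the defining ideals to $\Z_p$-bases using the surjection $\mu_p$ of Theorem~\ref{endanswermin} and controlling the intersection of the two ideals); then combine the canonical surjection $\alpha\colon\Gamma\Ll(\Z_p,G)\twoheadrightarrow\Ll(\Z_p,\Gamma G)$ with the rank equality $a_n(\Gamma\Ll(\Z_p,G))=a_n(\Gamma\Ll(\Q_p,G))=a_n(\Ll(\Q_p,\Gamma G))\le a_n(\Ll(\Z_p,\Gamma G))$ to force $\alpha$ to be an isomorphism and $\Ll(\Z_p,\Gamma G)$ to be torsion-free simultaneously. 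Your proposal as written would need to supply this descent from $\Q_p$ to $\Z_p$; without it, the $\Z_p$ half of the theorem is not proved.
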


The proof of Theorem~\ref{main result} is presented in Section~$2$. The main argument follows from filtrations on amalgamated products studied in Leoni's thesis~\cite[Chapter~$4$]{leoni2024zassenhaus}. This is precisely Theorem~\ref{leoni result} of this paper. We refer to~\cite[§9.2]{RibesZal} for further details on amalgamated products. 

\subsection*{Koszulity and gocha series of graph products}
A graded algebra over~$\F_p$ is said to be Koszul if it is generated in degree~$1$, with relations of degree~$2$ and admits a linear resolution. The Koszul property, also called Koszulity, allows us to efficiently compute the Zassenhaus filtration and the dimensions of the cohomology groups over~$\F_p$. This property has significant applications in current algebra and geometry. For further details, we refer for instance to~\cite{polishchuk2005quadratic}, \cite{weigel2015graded} and~\cite{weigel652koszul}.

Consider $E(\AA,Q)$ the completed group algebra of $Q$ over~$\AA$. This algebra is endowed with the filtration~$\{E_n(\AA,Q)\}_{n\in \NN}$ given by the $n$-th power of the augmentation ideal. This allows us to introduce the (locally finite) graded-$\AA$-algebra
$$\E(\AA,Q)\coloneq \grad(E(\AA,Q))\coloneq \bigoplus_{n\in \NN} \E_n(\AA,Q), \quad \text{where } \E_n(\AA,Q)\coloneq E_n(\AA,Q)/E_{n+1}(\AA,Q).$$
We also define
\begin{equation*}
\begin{aligned}
gocha(\AA, Q,t)&\coloneq 1+\sum_{n\geq 1} c_n(\AA,Q)t^n, \quad \text{ where } c_n(\AA,Q)\coloneq \dim_{\F_p}\E_n(\AA,Q),
\\ \log(gocha(\AA,Q,t))&\coloneq -\sum_{n\geq 1}\frac{(gocha(\AA,Q,t)-1)^n}{n}\coloneq \sum_{n\in \NN} b_n(\AA,Q)t^n, \quad  \text{ where } b_n(\AA,Q) \in \Q.
\end{aligned}
\end{equation*}

In this paper, computations of gocha series are justified by the following explicit relations. They come from \cite[Theorem~$2.9$]{MRT}. 
Let $\mu$ be the M{\"o}bius function and we write $n\coloneq mp^{\nu_p(n)}$ where~$m$ and~$n$ are coprime. We have
\begin{equation}\label{equation minac2}
\begin{aligned}
a_n(\Z_p,Q)=& \frac{1}{n}\times \sum_{m|n}\mu(n/m) mb_m(\Z_p,Q), 
\\a_n(\F_p,Q)=& \frac{1}{n}\times \sum_{v=0}^{\nu_p(n)}\sum_{p^vm|n}\mu\left(\frac{n}{p^vm}\right)p^vmb_m(\F_p,Q).
\end{aligned}
\end{equation}
Theorem~\ref{answerminac} is a consequence of Theorem~\ref{answerminac2}: for every positive integer~$n$, we have~$c_n(\F_p,Q)=c_n(\Z_p,Q)$ (and so~$b_n(\F_p,Q)=b_n(\Z_p,Q)$). Precisely, Equalities~\eqref{equation minac2} and Theorem~\ref{answerminac2} allow us to recover the Equality~\eqref{equation minac}.

Let~$H^\bullet(Q)$ be the graded (continuous) cohomology algebra of~$Q$ over~$\F_p$.
When~$\E(\F_p,Q)$ is Koszul, it was shown in~\cite[Proposition $1$]{Hamza25} (see also~\cite{leoni2024zassenhaus}) that~$H^\bullet(Q)$ is the quadratic dual of~$\E(\F_p,Q)$. So~$H^\bullet(Q)$ is Koszul.  We denote by~$\E$ the algebra of noncommutative polynomials over $\{X_1,\dots, X_d\}$, with~$d$ the minimal number of generators of $Q$. We write~$H^\bullet(Q)\coloneq \E/\I^!(Q)$, where~$\I^!(Q)$ is a two-sided ideal generated by a family~$S^!(Q)$ of homogeneous polynomials of degree~$2$. 
We define~$h^n(Q)\coloneq \dim_{\F_p}H^n(Q)$, which is well defined when $\E(\F_p, Q)$ is Koszul. 
We infer:

\begin{prp}[Theorem~\ref{Koszul graph}, Proposition~\ref{computation gocha series} and Corollaries~\ref{coho graph} and~\ref{app Koszul}] \label{computekosz}
Assume that $\E(\F_p,{}_iG)$ is Koszul, for $1\leq i \leq k$, with a minimal system of generators $\{{}_iX_j\}$ for $1\leq j \leq {}_id$. Then $\E(\F_p, \Gamma G)$ is Koszul, with a minimal system of generators~$\{{}_iX_j\}$ for~$1\leq i \leq k$ and~$1\leq j \leq {}_id$.

Let~$\E$ be the graded algebra of noncommutative series on $\{{}_iX_j\}$ where $1\leq i\leq k$ and $1\leq j\leq {}_id$.
We have:
\begin{multline*}
H^\bullet(\Gamma G)\simeq \E/\I^!(\Gamma G), \quad \text{and}
\\ gocha(\Gamma G,t)=\frac{1}{H^\bullet(\Gamma G,-t)}, \quad \text{where } \quad H^\bullet(\Gamma G,t)\coloneq 1+\sum_{n\in \NN} h^n(\Gamma G)t^n.
\end{multline*}
Furthermore $\I^!(\Gamma G)$ is the two-sided ideal of $\E$ generated by the family:
\begin{multline*}
S^!(\Gamma G)= \bigcup_{i=1}^k S({}_iG)^! \cup S_\Gamma^{(1)}\cup S_\Gamma^{(2)}, \quad \text{where }
\\ S_\Gamma^{(1)}\coloneq \{({}_uX_a)({}_vX_b)+({}_vX_b)({}_uX_a) \text{ for } 1\leq u < v \leq k \text{ and } 1\leq a\leq {}_ud, 1\leq b\leq {}_vd \}
\\S_{\Gamma}^{(2)}\coloneq \{({}_uX_a)({}_vX_b), \text{ for } u\neq v, \{ u,v\} \notin \mathbf{E} \text{ and } 1\leq a\leq {}_ud, 1\leq b\leq {}_vd \}\subset \E_2.
\end{multline*}
Moreover: 
$$h^n(\Gamma G)=\sum_{m\geq 1}\sum_{ (i_1,\dots, i_m) \in \Gamma_m}\sum_{n_1+\dots+n_m=n} h^{n_1}({}_{i_1}G)\dots h^{n_m}({}_{i_m}G),$$
where $\Gamma_m$ is the set of $m$-cliques (complete subgraphs of $\Gamma$ with~$m$ vertices).
\end{prp}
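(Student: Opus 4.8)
The strategy is to break the statement into four essentially independent assertions and assemble them from results already quoted in the excerpt. First, the Koszulity of $\E(\F_p,\Gamma G)$: since each $\E(\F_p,{}_iG)$ is Koszul with generators in degree $1$ and relations $S({}_iG)^!$ in degree $2$, and since (by Theorem~\ref{main result} applied with $\AA=\F_p$) we have $\Ll(\F_p,\Gamma G)\simeq\Gamma\Ll(\F_p,G)$, we should identify $\E(\F_p,\Gamma G)$ with the graph product $\coprod_i\E(\F_p,{}_iG)$ modulo the commutation relations $S_\Gamma^{(1)}$ between distinct factors that are \emph{not} joined by an edge—wait, more precisely the relations forcing $[{}_uX_a,{}_vX_b]=0$ when $\{u,v\}\in\bE$ and the relations ${}_uX_a\cdot{}_vX_b=0$ when $\{u,v\}\notin\bE$; this is exactly the combinatorial content of a graph product of associative algebras. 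The key input is that a graph product of quadratic Koszul algebras is again quadratic and Koszul: this follows because the quotient by the extra degree-$2$ relations is a normal/PBW-type extension, and one can run the Koszul-product argument (e.g.\ via the Backelin–Fröberg criterion, or via the Golod–Shafarevich/Hilbert-series test together with the Froberg inequality which becomes equality for Koszul algebras). I would cite the graph-product Koszulity from the RAAG/graph-algebra literature already in the bibliography (\cite{polishchuk2005quadratic}, \cite{panovpseries}) rather than reprove it.

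Second, once $\E(\F_p,\Gamma G)$ is Koszul, \cite[Proposition~1]{Hamza25} gives $H^\bullet(\Gamma G)\simeq\E(\F_p,\Gamma G)^!$, the quadratic dual, and the quadratic dual of a quotient $\E/\I$ by quadratic relations $S$ is $\E/\I^!$ where $\I^!$ is generated by the orthogonal complement $S^\perp$ in $\E_2$ under the standard pairing. So I would simply compute $S^\perp$ for the relation set of $\E(\F_p,\Gamma G)$. The relations of $\E(\F_p,\Gamma G)$ are: the relations $S({}_iG)$ inside each factor, the commutators ${}_uX_a\cdot{}_vX_b-{}_vX_b\cdot{}_uX_a$ for edges $\{u,v\}\in\bE$, and the monomials ${}_uX_a\cdot{}_vX_b$ and ${}_vX_b\cdot{}_uX_a$ for non-edges $u\neq v$, $\{u,v\}\notin\bE$. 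Orthogonality: $(S({}_iG))^\perp$ within the degree-$2$ part spanned by ${}_iX_\bullet{}_iX_\bullet$ gives $S({}_iG)^!$; the span of $\{{}_uX_a{}_vX_b,{}_vX_b{}_uX_a\}$ for a fixed non-edge pair is $2$-dimensional inside the $2$-dimensional space spanned by the same two monomials, so its orthogonal complement there is $0$—hence non-edges contribute \emph{nothing} to $\I^!$... but that is backwards from the claim, so the correct bookkeeping is that the relation "${}_uX_a{}_vX_b=0$ (non-edge)" is dual to "no relation between these monomials", while "${}_uX_a{}_vX_b-{}_vX_b{}_uX_a=0$ (edge)" is self-orthogonal up to sign, giving the symmetrized generator ${}_uX_a{}_vX_b+{}_vX_b{}_uX_a$. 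Thus in $\E(\F_p,\Gamma G)^!$ the cross-relations are: anticommutation $S_\Gamma^{(1)}$ for \emph{all} pairs $u<v$, together with the vanishing $S_\Gamma^{(2)}$ precisely for the non-edges (the edges' cross-monomials survive)—which is exactly the stated $S^!(\Gamma G)$. I would write out this orthogonality computation carefully since the edge/non-edge duality swap is the easiest place to make a sign or indexing error.

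Third and fourth, the Hilbert series identities. For any Koszul algebra $A=\E(\F_p,\Gamma G)$ one has the numerical Koszul duality $A(t)\cdot A^!(-t)=1$, i.e.\ $gocha(\Gamma G,t)\cdot H^\bullet(\Gamma G,-t)=1$, which is \cite[Proposition~1]{Hamza25} or standard; this gives the displayed reciprocal formula. Finally the explicit formula for $h^n(\Gamma G)$: from $H^\bullet(\Gamma G)\simeq\E/\I^!(\Gamma G)$ with the relations described, a monomial basis of $H^\bullet(\Gamma G)$ consists of products ${}_{i_1}w_1\cdots{}_{i_m}w_m$ where $w_\ell$ is a basis monomial of $H^\bullet({}_{i_\ell}G)$ of positive degree and $(i_1,\dots,i_m)$ is an ordered tuple that must be pairwise adjacent in $\Gamma$ (edges commute, allowing reordering into increasing index order; non-edges kill the product; equal consecutive indices merge into one factor of $H^\bullet({}_iG)$)—so each such basis element corresponds to a clique $\{i_1<\dots<i_m\}$ with a choice of positive-degree cohomology class in each vertex's algebra. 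Summing degrees gives exactly $h^n(\Gamma G)=\sum_m\sum_{(i_1,\dots,i_m)\in\Gamma_m}\sum_{n_1+\dots+n_m=n}\prod_\ell h^{n_\ell}({}_{i_\ell}G)$. The one technical point to nail down here is that the monomials just described really form a basis of the quotient $\E/\I^!(\Gamma G)$—i.e.\ a diamond-lemma / PBW argument for graph products of quadratic algebras—which is again a known fact for graph products of algebras and which I would invoke rather than reprove; the main obstacle overall is precisely making the reduction-to-normal-form (PBW basis for a graph product of Koszul algebras) rigorous, and I would handle it by citing the relevant statement from the graph-algebra literature.
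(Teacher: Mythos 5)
Your proposal reaches all four conclusions correctly, but by a genuinely different route from the paper. For Koszulity, the paper does \emph{not} invoke a general ``graph products of Koszul algebras are Koszul'' theorem: it runs an induction on the number of vertices, writing $\Gamma G \simeq ({}^a\Gamma{}^aG)\coprod_{({}^c\Gamma{}^cG)}({}^b\Gamma{}^bG)$ when $\Gamma$ is not complete and applying Blumer's lemma on amalgams of Koszul algebras over a common Koszul subalgebra (the complete-graph base case being a tensor product, handled by Polishchuk--Positselski); be aware that the references you name (\cite{polishchuk2005quadratic}, \cite{panovpseries}) do not literally contain the graph-product Koszulity statement you want to cite, so if you go your way you would in effect have to reprove it by exactly this induction. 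For the presentation of $H^\bullet(\Gamma G)$, your direct orthogonality computation of $S^\perp$ (with the edge/non-edge swap you correctly untangle after the initial slip --- non-edges impose \emph{no} relations in $\E(\F_p,\Gamma G)$, hence their full $2$-dimensional cross-space dies in the dual) is a legitimate alternative to the paper's argument, which only checks $S^!(\Gamma G)\subset\I^!(\Gamma G)$ and then counts dimensions via $|S(\Gamma G)|+|S^!(\Gamma G)|=d^2$. For the formula for $h^n(\Gamma G)$, the paper again inducts, using Lemaire's additivity $H^\bullet(\Gamma G,t)=H^\bullet({}^a\Gamma{}^aG,t)+H^\bullet({}^b\Gamma{}^bG,t)-H^\bullet({}^c\Gamma{}^cG,t)$ for the amalgam together with K\"unneth in the complete case, whereas you propose a clique-indexed monomial basis of the quadratic dual; that basis claim is true but is precisely the PBW-type normal-form statement you flag as needing a citation, and the paper's Mayer--Vietoris-style induction is what replaces it. In short: your plan is sound and the endpoint identical, but where you would import two black boxes (graph-product Koszulity and a diamond-lemma basis for the dual), the paper manufactures both from the single amalgamated-product decomposition, which also underlies the identification $\E(\F_p,\Gamma G)\simeq\Gamma\E(\F_p,G)$ that your argument silently relies on.
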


\subsection*{Application to abstract groups}
We obtain analogous results for abstract groups. Let $Q$ be a finitely generated abstract group. Assume that~$A$ and~$B$ are subgroups of~$Q$. We define~$AB$ the normal subgroup of~$Q$ generated by~$ab$, and~$\lbrack A, B\rbrack$ the normal subgroup of~$Q$ generated by~$\lbrack a,b\rbrack\coloneq a^{-1}b^{-1}ab$, for~$a$ in $A$ and~$b$ in~$B$. We consider~$A^p$ the normal subgroup of~$A$ generated by all~$p$-th powers in~$A$. Analogously to~\eqref{def filt}, we introduce the Zassenhaus filtration~$Q_\bullet(\F_p)$ and the lower central series~$Q_\bullet(\Z)$ of~$Q$. Let~$\mathbb{B}$ be either the ring~$\F_p$ or~$\Z$, both endowed with trivial filtration (or gradation). As in the~pro-$p$ case, we define~$\Ll(\mathbb{B},Q)$ and~$a_n(\mathbb{B},Q)$. Since~$Q$ is finitely generated, the numbers~$a_n(\mathbb{B}, Q)$ are well-defined for all positive integers~$n$. From a family~$G\coloneq \{ {}_1G,\dots ,{}_k G\}$ of finitely generated abstract groups, we obtain families~$\Ll(\mathbb{B},G)\coloneq \{\Ll(\mathbb{B},{}_1G),\dots, \Ll(\mathbb{B},{}_kG)\}$ of locally finite graded~$\mathbb{B}$-Lie algebras.  As in the pro-$p$ case, we define graph products of abstract groups and Lie algebras over~$\Z$.

We fix a prime~$p$, and we denote by~$\widehat{Q}^p$ the pro-$p$ completion of~$Q$. Observe that~$\widehat{Q}^p$ is a finitely generated pro-$p$ group. We define~$c_Q\colon Q\to \widehat{Q}^p$ the natural morphism from~$Q$ to its pro-$p$ completion, which induces two maps:
$$c_Q(\F_p)\colon \Ll(\F_p,Q)\to \Ll(\F_p, \widehat{Q}^p), \quad \text{and} \quad c_Q(\Z_p)\colon \Ll(\Z, Q)\otimes_{\Z}\Z_p\to \Ll(\Z_p, \widehat{Q}^p).$$ 
A well-known topological argument (following for instance from~\cite[Proposition~$1.19$]{DDMS}) shows that~$c_Q(\F_p)\colon \Ll(\F_p,Q)\to \Ll(\F_p, \widehat{Q}^p)$ is an isomorphism when~$Q$ is finitely generated. Let us also observe that a family~$G\coloneq \{ {}_1G,\dots ,{}_k G\}$ of finitely generated abstract groups, defines a family~$\widehat{G}^p \coloneq \{ \widehat{{}_1G}^p,\dots ,\widehat{{}_k G}^p\}$ of finitely generated pro-$p$ groups, and canonical morphisms that we denote by~${}_ic \colon {}_iG \to \widehat{{}_iG}^p$ rather than~$c_{{}_iG}$. We say that~$G$ satisfies the condition~$(\ast_\Z)$, if for every $1\leq i \leq k$, and every prime~$p$, we have the following condition:
\begin{equation}\tag{$\ast_\Z$}
\left\{ \begin{aligned}
(i) & \text{ the } \Z_p\text{-module } \Ll(\Z_p, \widehat{{}_iG}^p) \text{ is free,}
\\ (ii) & \text{ the map } {}_ic(\Z_p)\colon \Ll(\Z,{}_iG)\otimes_\Z \Z_p \to \Ll(\Z_p, \widehat{{}_iG}^p) \text{ is an isomorphism.}.
\end{aligned} \right.
\end{equation} 
We show in this paper the following result:

\begin{coco}[Theorem~\ref{abstract computations}]\label{application abstract}
Let $G\coloneq \{ {}_1G,\dots ,{}_k G\}$ be a family of finitely generated abstract groups and let~$p$ be a prime. 
Then~$\Ll(\F_p, \Gamma G)\simeq \Gamma \Ll(\F_p,G)$.

Additionally, assume that $G$ satisfies~$(\ast_\Z)$, 
then~$\Ll(\Z, \Gamma G)$ is torsion-free and
$$\Ll(\Z,\Gamma G)\simeq \Gamma \Ll(\Z,G), \quad \text{and} \quad\Ll(\Z,\Gamma G)\otimes_\Z \Z_p\simeq \Ll(\Z_p, \Gamma \widehat{G}^p)  \simeq \Gamma \Ll(\Z_p, \widehat{G}^p).$$ 
Thus, for every positive integer $n$, and every prime~$p$, we have~$a_n(\Z,\Gamma G)=a_n(\Z_p,\Gamma \widehat{G}^p)$.
\end{coco}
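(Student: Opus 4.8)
The plan is to derive Corollary~\ref{application abstract} from Theorem~\ref{main result} by transferring everything through pro-$p$ completion. First I would handle the $\F_p$-statement. Since $\widehat{G}^p=\{\widehat{{}_1G}^p,\dots,\widehat{{}_kG}^p\}$ is a family of finitely generated pro-$p$ groups, it automatically satisfies~\eqref{condA} over $\AA=\F_p$, so Theorem~\ref{main result} gives $\Ll(\F_p,\Gamma\widehat{G}^p)\simeq\Gamma\Ll(\F_p,\widehat{G}^p)$. Then I use the fact recalled in the excerpt that for finitely generated $Q$ the canonical map $c_Q(\F_p)\colon\Ll(\F_p,Q)\to\Ll(\F_p,\widehat{Q}^p)$ is an isomorphism; applying this to $Q=\Gamma G$ and to each $Q={}_iG$, and noting that pro-$p$ completion commutes with the graph-product construction (the graph product is a colimit built from coproducts and quotients by commutator relations, all of which are preserved by the left adjoint $Q\mapsto\widehat{Q}^p$, so $\widehat{\Gamma G}^p\simeq\Gamma\widehat{G}^p$), I get $\Ll(\F_p,\Gamma G)\simeq\Ll(\F_p,\Gamma\widehat{G}^p)\simeq\Gamma\Ll(\F_p,\widehat{G}^p)\simeq\Gamma\Ll(\F_p,G)$, the last isomorphism again by $c_{{}_iG}(\F_p)$ on each factor together with functoriality of $\Gamma(-)$ in the family of Lie algebras.

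Next I would treat the $\Z_p$-statements under hypothesis~$(\ast_\Z)$. Part~$(i)$ of $(\ast_\Z)$ says each $\Ll(\Z_p,\widehat{{}_iG}^p)$ is a free (hence torsion-free) $\Z_p$-module, so $\widehat{G}^p$ satisfies~\eqref{condA} over $\AA=\Z_p$, and Theorem~\ref{main result} yields that $\Ll(\Z_p,\Gamma\widehat{G}^p)$ is torsion-free and $\Ll(\Z_p,\Gamma\widehat{G}^p)\simeq\Gamma\Ll(\Z_p,\widehat{G}^p)$. Using $\widehat{\Gamma G}^p\simeq\Gamma\widehat{G}^p$ again, this is $\Ll(\Z_p,\widehat{\Gamma G}^p)$. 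Now part~$(ii)$ of $(\ast_\Z)$ gives isomorphisms ${}_ic(\Z_p)\colon\Ll(\Z,{}_iG)\otimes_\Z\Z_p\xrightarrow{\ \sim\ }\Ll(\Z_p,\widehat{{}_iG}^p)$; since $\Gamma(-)$ of a family of graded Lie algebras is built from coproducts and quotients, and $-\otimes_\Z\Z_p$ is right exact and commutes with coproducts (direct sums) of graded modules, one has a natural isomorphism $\Gamma\bigl(\Ll(\Z,G)\bigr)\otimes_\Z\Z_p\simeq\Gamma\bigl(\Ll(\Z,G)\otimes_\Z\Z_p\bigr)\simeq\Gamma\Ll(\Z_p,\widehat{G}^p)$. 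Combining: $\Ll(\Z_p,\Gamma\widehat{G}^p)\simeq\Gamma\Ll(\Z_p,\widehat{G}^p)\simeq\Gamma\Ll(\Z,G)\otimes_\Z\Z_p$.

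It remains to produce $\Ll(\Z,\Gamma G)$ itself and compare it with $\Gamma\Ll(\Z,G)$. Here I would argue that the composite map $\Ll(\Z,\Gamma G)\otimes_\Z\Z_p\to\Ll(\Z_p,\widehat{\Gamma G}^p)$ — the $\Z_p$-completion comparison map $c_{\Gamma G}(\Z_p)$ — fits into a commutative square with the natural map $\Ll(\Z,\Gamma G)\to\Gamma\Ll(\Z,G)$ (induced by the factor inclusions ${}_iG\hookrightarrow\Gamma G$, which are compatible with the commutator relations), and that after $\otimes_\Z\Z_p$ the latter becomes the isomorphism $\Gamma\Ll(\Z,G)\otimes_\Z\Z_p\simeq\Ll(\Z_p,\Gamma\widehat{G}^p)$ established above. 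Since $\Gamma\Ll(\Z,G)$ is graded and each graded piece is a finitely generated $\Z$-module (each ${}_iG$ being finitely generated), and since $-\otimes_\Z\Z_p$ is faithfully exact enough on finitely generated $\Z$-modules to detect isomorphisms degreewise across all primes $p$ simultaneously, the natural map $\Ll(\Z,\Gamma G)\to\Gamma\Ll(\Z,G)$ is an isomorphism; in particular $\Ll(\Z,\Gamma G)$ is torsion-free because each $\Gamma\Ll(\Z,G)_n\otimes_\Z\Z_p$ is free for every $p$. The rank identity $a_n(\Z,\Gamma G)=a_n(\Z_p,\Gamma\widehat{G}^p)$ then follows by taking $\Z_p$-ranks of the degree-$n$ pieces of $\Ll(\Z,\Gamma G)\otimes_\Z\Z_p\simeq\Ll(\Z_p,\Gamma\widehat{G}^p)$.

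\emph{Main obstacle.} The routine parts are the identification $\widehat{\Gamma G}^p\simeq\Gamma\widehat{G}^p$ (formal from adjointness) and the flat/right-exact base-change bookkeeping. The genuinely delicate point is showing that the \emph{abstract} Lie algebra $\Ll(\Z,\Gamma G)$ — defined by the lower central series of the discrete group $\Gamma G$, \emph{not} by completion — maps isomorphically onto $\Gamma\Ll(\Z,G)$, i.e. that no torsion or collapse is introduced by the graph-product relations at the level of the integral lower central series. I expect to control this by the commutative-square argument above, reducing it to the already-proven pro-$p$ statements at every prime $p$ together with finite generation of each graded piece; the subtlety to be careful about is that $-\otimes_\Z\Z_p$ is not faithful on all $\Z$-modules, so the degreewise finite-generation hypothesis (from finite generation of the ${}_iG$) is essential and must be invoked explicitly.
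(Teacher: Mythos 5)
Your proposal follows essentially the same route as the paper: reduce the $\F_p$-statement to Theorem~\ref{main result} via $\widehat{\Gamma G}^p\simeq\Gamma\widehat{G}^p$ and the isomorphism $c_Q(\F_p)$, establish $\Gamma\Ll(\Z,G)\otimes_\Z\Z_p\simeq\Ll(\Z_p,\Gamma\widehat{G}^p)$ from $(\ast_\Z)$ (the paper's Lemma~\ref{graph product Z}), and then upgrade to an integral isomorphism by checking the comparison map is an isomorphism after $\otimes_\Z\Z_p$ for every prime and invoking degreewise finite generation --- exactly the paper's strategy in Theorem~\ref{abstract computations}.

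One correction: the canonical map runs $\alpha\colon\Gamma\Ll(\Z,G)\twoheadrightarrow\Ll(\Z,\Gamma G)$, not the other way around. The factor inclusions ${}_iG\hookrightarrow\Gamma G$ induce maps $\Ll(\Z,{}_iG)\to\Ll(\Z,\Gamma G)$ which assemble, via the universal property of the coproduct and the vanishing of the edge brackets, into a \emph{surjection out of} the graph product of Lie algebras; there is no natural map out of $\Ll(\Z,\Gamma G)$ into $\Gamma\Ll(\Z,G)$. With the arrow reversed your argument is exactly the paper's: $\alpha_p$ is surjective, the composite $c_{\Gamma G}(\Z_p)\circ\alpha_p\circ\omega_p$ is the identity on degree-one generators and hence an isomorphism, so $\alpha_p$ is an isomorphism for every $p$, and degreewise finite generation forces $\alpha$ itself to be an isomorphism, with torsion-freeness inherited from $\Gamma\Ll(\Z,G)$.
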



The isomorphism $\Ll(\F_p, \Gamma G)\simeq \Gamma \Ll(\F_p,G)$ was previously proved in the following cases:
\\$(i)$ The graph $\Gamma$ is free or complete. We refer to~\cite{Lichtman} and~\cite{leoni2024zassenhaus} for generalizations.
\\$(ii)$ The group~$\Gamma G$ is a RAAG. For instance, see~\cite[Theorem $6.4$]{wade2016lower}.
\\$(iii)$ The group~$\Gamma G$ is a graph product of a family~$G$ given by~${}_1G = {}_2G = \dots= {}_kG\coloneq \Z/p\Z$. This case was shown by Panov and Rahmatullaev~\cite{panovpseries}. When~$p=2$, we infer Coxeter groups.

The isomorphism $\Ll(\Z,\Gamma G)\simeq \Gamma \Ll(\Z,G)$ still holds for the case $(ii)$ of RAAGs. We refer to~\cite[Theorems $6.3$ and $6.4$]{wade2016lower}. The previous isomorphism also holds in the following case given by~\cite[Corollary]{Smel}. Assume that~$\Gamma^f$ is a free graph. We take~$G$ a family of groups such that, for every~$1\leq i \leq k$, $(a)$ the $\Z$-module~$\Ll(\Z,{}_iG)$ is torsion-free and $(b)$~we have the equality~$\bigcap_{n\in \N}{}_iG_n(\Z)=\{1\}$. It was shown that~$\Ll(\Z, \Gamma^f G)\simeq \Gamma^f \Ll(\Z,G)$. The condition~$(a)$ is indeed necessary. Let~$\Gamma G$ be a Coxeter group (as described in the case~$(iii)$). Then the algebra~$\Ll({\Z},\Gamma G)$ has torsion. See~\cite[Proposition $4.1$]{veryovkincoxeter}. Veryovkin also showed \cite[Example $4.3$]{veryovkincoxeter}, for $\Gamma^{f,2}$ the free graph on two vertices, that 
$$\Ll_3({\Z},\Gamma^{f,2} G)\not\simeq (\Gamma^{f,2} \Ll)_3({\Z},G)\coloneq (\Gamma^{f,2}G)_2(\Z)/(\Gamma^{f,2}G)_3(\Z),$$

Using geometric tools, the cohomology groups over~$\F_p$ (with the trivial action) of graph products of abstract groups were computed in~\cite[Theorem~2.35]{bahri2010polyhedral}. In this paper, we relate the cohomology of graph products in the abstract and pro-$p$ cases. We say that a finitely generated group~$Q$ is~$p$-cohomologically complete if for every positive integer~$n$ we have~$H^n(Q)\simeq H^n(\widehat{Q}^p)$.
Labute~\cite{Labute1967} 
gave first examples of~$p$-cohomologically complete groups. This notion was later studied by~Lorensen \cite{LORENSEN20106}. He showed that RAAGs are~$p$-cohomologically complete. He also proved that this property is stable by some HNN extensions and some amalgamated products. 
In this paper, we show:

\begin{prp}[Corollary~\ref{cohomological completness} and Remark~\ref{computation cohomology abstract groups}]\label{main complete coho}
Let~$G\coloneq \{{}_1G, \dots, {}_kG\}$ be a family of finitely generated abstract groups. Let~$p$ be a prime. Then, for every positive integer~$n$:
$$h^n(\Gamma G)=\sum_{m\geq 1}\sum_{ (i_1,\dots, i_m) \in \Gamma_m}\sum_{n_1+\dots+n_m=n} h^{n_1}({}_{i_1}G)\dots h^{n_m}({}_{i_m}G).$$

Furthermore, assume that for every~$1\leq i \leq k$ the group~${}_iG$ is $p$-cohomologically complete. Then the group~$\Gamma G$ is $p$-cohomologically complete.
\end{prp}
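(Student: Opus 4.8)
The plan is to reduce everything to the pro-$p$ case already established in Proposition~\ref{computekosz}, together with the geometric computation of the cohomology of graph products of abstract groups. First, I would invoke \cite[Theorem~2.35]{bahri2010polyhedral}, which expresses $H^\bullet(\Gamma G)$ for abstract groups as the direct sum, over cliques $(i_1,\dots,i_m)\in\Gamma_m$, of the tensor products $H^\bullet({}_{i_1}G)\otimes\cdots\otimes H^\bullet({}_{i_m}G)$. Taking dimensions degree by degree yields immediately
$$h^n(\Gamma G)=\sum_{m\geq 1}\sum_{(i_1,\dots,i_m)\in\Gamma_m}\sum_{n_1+\dots+n_m=n}h^{n_1}({}_{i_1}G)\cdots h^{n_m}({}_{i_m}G),$$
which is the first assertion. (The same formula, with the pro-$p$ groups $\widehat{{}_iG}^p$ in place of ${}_iG$, is the last display of Proposition~\ref{computekosz}.)

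For the second assertion, suppose each ${}_iG$ is $p$-cohomologically complete, i.e.\ $H^n({}_iG)\simeq H^n(\widehat{{}_iG}^p)$ for all $n$; in particular $h^n({}_iG)=h^n(\widehat{{}_iG}^p)$ for all $i,n$. The combinatorial formula above holds on both sides — for $\Gamma G$ by \cite{bahri2010polyhedral}, and for $\Gamma\widehat{G}^p$ by Proposition~\ref{computekosz} — and both are governed by the same clique data of the fixed graph $\Gamma$. Substituting $h^n({}_iG)=h^n(\widehat{{}_iG}^p)$ into the right-hand side gives $h^n(\Gamma G)=h^n(\Gamma\widehat{G}^p)$ for every $n$. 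It then remains to identify $\widehat{\Gamma G}^p$ with $\Gamma\widehat{G}^p$: the functor of pro-$p$ completion sends coproducts to coproducts and is compatible with quotients by normal subgroups generated by the commutators $[{}_iG,{}_jG]$, so the defining presentation of $\Gamma G$ passes to the defining presentation of $\Gamma\widehat{G}^p$; hence $\widehat{\Gamma G}^p\simeq\Gamma\widehat{G}^p$. Therefore $h^n(\Gamma G)=h^n(\widehat{\Gamma G}^p)$ for all $n$.

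Finally, equality of dimensions in every degree is not quite enough: one must check that the natural comparison map $c_{\Gamma G}^*\colon H^n(\widehat{\Gamma G}^p)\to H^n(\Gamma G)$ is an isomorphism. In degree $1$ this is automatic since $H^1$ only depends on the $p$-Frattini quotient, which is preserved by pro-$p$ completion. In degree $2$ one compares the spaces of relations: the presentation of $\Gamma G$ built from presentations of the ${}_iG$'s together with the commuting relations from $\bE$ maps onto the analogous presentation of $\Gamma\widehat{G}^p$, and $p$-cohomological completeness of the factors ensures the relation modules match, so $H^2(\widehat{\Gamma G}^p)\to H^2(\Gamma G)$ is surjective; as it is a surjection between spaces of equal finite dimension it is an isomorphism. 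For the higher degrees I would argue that the isomorphisms $H^\bullet({}_iG)\xrightarrow{\sim}H^\bullet(\widehat{{}_iG}^p)$ are ring homomorphisms compatible with the cup products appearing in the Künneth/clique decompositions of both \cite{bahri2010polyhedral} and Proposition~\ref{computekosz}, so the comparison map $c_{\Gamma G}^*$ is, clique by clique, a tensor product of isomorphisms, hence an isomorphism in every degree; this shows $\Gamma G$ is $p$-cohomologically complete. The main obstacle is precisely this last point — upgrading the numerical equality $h^n(\Gamma G)=h^n(\widehat{\Gamma G}^p)$ to an honest isomorphism induced by $c_{\Gamma G}$ — which requires knowing that the clique decomposition on the abstract side and the one coming from the Koszul computation on the pro-$p$ side are identified by the completion map; I expect this to follow formally once one checks that the decomposition of \cite{bahri2010polyhedral} is natural with respect to the maps $c_{{}_iG}$.
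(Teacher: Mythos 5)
Your derivation of the dimension formula is exactly what the paper does in Remark~\ref{computation cohomology abstract groups}: it is a direct quotation of \cite[Theorem~2.35]{bahri2010polyhedral}, and that part is fine. The second assertion, however, has a genuine gap, in fact two. First, your argument leans on Proposition~\ref{computekosz} to get the clique formula for $h^n(\Gamma\widehat{G}^p)$ on the pro-$p$ side, but Proposition~\ref{computekosz} requires each $\E(\F_p,{}_iG)$ to be Koszul, and Proposition~\ref{main complete coho} makes no such assumption. Without Koszulity you do not even obtain the numerical equality $h^n(\Gamma G)=h^n(\Gamma\widehat{G}^p)$ by this route; the paper itself flags in Remark~\ref{computation cohomology abstract groups} that the dimension-counting argument only gives an \emph{alternative} proof of Corollary~\ref{cohomological completness} under the extra Koszulity hypothesis. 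Second, even granting Koszulity, the decisive step --- upgrading equality of dimensions to the statement that the comparison map $c_{\Gamma G}^*\colon H^n(\widehat{\Gamma G}^p)\to H^n(\Gamma G)$ is an isomorphism --- is exactly what you leave as ``I expect this to follow formally.'' The claim that the clique decompositions on both sides are natural with respect to the completion maps and identify $c_{\Gamma G}^*$ clique by clique as a tensor product of isomorphisms is the whole content of the theorem; it is not established by matching Poincar\'e series.

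The paper's actual proof avoids both problems and does not use the dimension formula at all. It argues by induction on the number of vertices of $\Gamma$: if $\Gamma$ is complete, $\Gamma G\simeq\prod_i{}_iG$ and the abstract and pro-$p$ K\"unneth formulas give the isomorphism directly; if $\Gamma$ is not complete, one picks two non-adjacent vertices $a,b$ and writes $\Gamma G\simeq({}^a\Gamma\,{}^aG)\coprod_{({}^c\Gamma\,{}^cG)}({}^b\Gamma\,{}^bG)$ (and likewise for $\Gamma\widehat{G}^p$, using Lemma~\ref{p-embed} and \cite[Proposition~$9.2.1$]{RibesZal}), then invokes the induction hypothesis together with Lorensen's theorem \cite[Corollary~$3.2$]{LORENSEN20106} that $p$-cohomological completeness is preserved under such amalgamated products. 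That Mayer--Vietoris-type argument produces the isomorphism $c_{\Gamma G}^*$ directly, with no Koszulity hypothesis. To repair your proof you would either need to restrict the statement to the Koszul setting and then genuinely verify the naturality of the two clique decompositions, or switch to the amalgamated-product induction.
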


\subsection*{Example}
Let~$\{{}_1g,\dots,{}_kg\}$ be a collection of positive integers. We denote by $S_{{}_ig}$ a surface of genus~${}_ig$. We define~${}_iG$ the fundamental group of~$S_{{}_ig}$. Let us write~$S\coloneq\{S_{{}_1g},\dots, S_{{}_kg}\}$ and~$G\coloneq \{{}_1G,\dots, {}_kG\}$. Let~$\Gamma$ be an undirected graph. From~\cite[Proposition~$2.9$]{panovpseries}, there exists a pointed topological space, that we denote by~$\Gamma S$, with fundamental group~$\Gamma G$. The space~$\Gamma S$ is constructed as the graph product of the family~$S$ by~$\Gamma$. We refer to~\cite[Construction~$2.1$]{panovpseries}. In this example, we study~$\Gamma G$. Let us first observe from~\cite[Theorem~$5.6$]{ruthgraph} and~\cite[§1.3]{Baumslag} that the group~$\Gamma G$ is residually~$p$-finite, i.e.\ the group~$\Gamma G$ injects into its pro-$p$ completion~$\widehat{\Gamma G}^p$. Equivalently, we infer~$\bigcap_{n\in \NN}(\Gamma G)_n(\F_p)=\{1\}$.

For every~$1\leq i \leq k$ and every prime~$p$, both the abstract group~${}_iG$, and the pro-$p$ group~$\widehat{{}_iG}^p$, admit presentations:
\begin{equation}\tag{$\P_{{}_ig}$} 
\langle {}_ix_1,\dots ,{}_ix_{{}_ig}, {}_iy_1,\dots, {}_iy_{{}_ig}|\quad \prod_{j=1}^{{}_ig}[{}_ix_j,{}_iy_j]=1\rangle.
\end{equation}
Since~$\widehat{{}_iG}^p$ is a Demushkin group, it follows from~\cite[Theorem~C]{minac2021koszul}, that the algebra~$\E(\F_p, \widehat{{{}_i}G}^p)$ is Koszul. Moreover
$$H^\bullet(\widehat{{}_iG}^p, t)=1+2{}_ig t+t^2, \quad gocha(\F_p,\widehat{{}_iG}^p,t)=\frac{1}{1-2{}_igt+t^2}.$$
By Labute \cite{labutedescending}, the family $G$ also satisfies the condition $(i)$ and $(ii)$ of Corollary~\ref{application abstract}. 
Consequently, for every graph~$\Gamma$, the algebra $\Ll(\Z,\Gamma G)$ is torsion-free. Thus Proposition~\ref{computekosz} provides
$$\Ll(\Z,\Gamma G)\simeq \Gamma \Ll(\Z,G), \quad \text{and } gocha(\Gamma \widehat{G}^p,t)= \frac{1}{H^\bullet(\Gamma \widehat{G}^p,-t)}, \text{ for every prime } p.$$

Furthermore, we deduce from Labute~\cite{Labute1967} that for every~$1\leq i \leq k$, the group~${}_iG$ is~$p$-cohomologically complete. As a consequence of Proposition~\ref{main complete coho}, the group~$\Gamma G$ is~$p$-cohomologically complete. Hence, we show that, for every positive integer~$n$:
$$h^n(\Gamma G)=h^n(\Gamma \widehat{G}^p)=\sum_{m\geq 1}\sum_{( i_1,\dots, i_m)\in \Gamma_m}\sum_{n_1+\dots+n_m=n} h^{n_1}({{}_{i_1}G})\dots h^{n_m}({{}_{i_m}G}),$$
where $h^0({{}_iG})=1$, $h^1({{}_iG})=2{}_ig$, $h^2({{}_iG})=1$ and $h^n({{}_iG})=0$ for~$n\geq 3$.

Finally, let us also observe that~Corollary~\ref{application abstract} yields for every positive integer~$n$ and every prime~$p$ the equality~$a_n(\Z_p,\Gamma \widehat{G}^p)=a_n(\Z,\Gamma G)$.
\begin{Exa}
As a concrete example, let us consider the graph $\Gamma$ given by:

\centering{
\begin{tikzcd}
2 &                                           & 3 \\
  & 1 \arrow[lu, no head] \arrow[ru, no head] &  
\end{tikzcd}}

\justifying
We take ${}_1g={}_2g={}_3g=2$. For~$1\leq i \leq k$, the group~${{}_iG}$ is presented~by:
\begin{equation}\tag{$\P_{{{}_iG}}$} 
\langle {}_ix_1 ,{}_ix_{2}, {}_iy_1, {}_iy_{2}|\quad {}_il\coloneq [{}_ix_1,{}_iy_1][{}_ix_2,{}_iy_2]=1\rangle.
\end{equation}

The group $\Gamma G$ (resp.\ $\Gamma \widehat{G}^p$) is an amalgamated product and admits a presentation $(\P_{\Gamma G})$ (resp.\ a pro-$p$ presentation~$(\P_{\Gamma \widehat{G}^p})$) given by 

$\bullet$ $12$ generators $\{ {}_ix_j, {}_iy_j\}$ for $1\leq i\leq 3$ and $1\leq j\leq 2$,

$\bullet$ $35$ relations: ${}_1l$, ${}_2l$, ${}_3l$, $[{}_ux_a, {}_vx_b]$, $[{}_uy_a, {}_vy_b]$ and $[{}_ux_a, {}_vy_b]$ for $\{u,v\}=\{1,2\}$ or $\{u,v\}=\{1,3\}$ and $1\leq a,b \leq 2$.
\\The dimensions of the cohomology groups of~$\Gamma G$ over~$\F_p$ (with trivial action) are given by:
$$h^1(\Gamma G)=12, \quad h^2(\Gamma G)=35, \quad h^3(\Gamma G)=16, \quad h^4(\Gamma G)=2, \text{ and } h^n(\Gamma G)=0 \text{ for } n\geq 5.$$

For every prime~$p$, the gocha and Poincaré series of $\Gamma \widehat{G}^p$ are given by:
$$gocha(\Gamma \widehat{G}^p,t)=\frac{1}{1-12t+35t^2-16t^3+2t^4}, \quad H^\bullet(\Gamma \widehat{G}^p, t)=1+12t+35t^2+16t^3+2t^4.$$

From Theorem~\ref{answerminac}, we obtain for instance:
\begin{multline*}
a_1(\Z,\Gamma G)=12, \quad a_2(\Z,\Gamma G)=31, \quad a_3(\Z,\Gamma G)=168, \quad a_4(\Z,\Gamma G)=928, 
\\ \text{and } \quad a_5(\Z,\Gamma G)=5704.
\end{multline*}
\end{Exa}

\subsection*{Structure of the paper}
We begin with some prerequisites on graded and filtered algebras, as well as the Magnus isomorphism. Then the first section contains the proof of Theorem~\ref{answerminac}. The second section provides various new examples fitting the conditions of Theorem~\ref{answerminac}. This section establishes Theorem~\ref{main result}. It is based on arguments from~\cite[Chapter~$4$]{leoni2024zassenhaus}. The third section studies the cohomology of graph products of pro-$p$ groups. The main argument follows from Koszulity and this section shows Proposition~\ref{computekosz}. Finally, the last section generalizes the results on graph products of pro-$p$ groups to the case of finitely presented abstract groups. It yields proofs of Corollary~\ref{application abstract} and Proposition~\ref{main complete coho}.

\tableofcontents

As this paper is technical in nature, we provide a table of notations to assist the reader. We restrict our attention to pro-$p$ groups. The corresponding notations in the abstract group setting are given in the last section. For a fixed prime~$p$, all of the graded~$\F_p$-Lie algebras are assumed to be~$p$-restricted.

\newpage
\begin{table}[H]
\raggedright
\renewcommand{\arraystretch}{1.2}
\begin{tabular}{@{}ll@{}}
\toprule
\textbf{Symbol} & \textbf{Meaning} \\
\midrule
$p$ & A prime number. \\
$\AA$ & Either $\mathbb{F}_p$ (field with $p$ elements) or $\mathbb{Z}_p$ (ring of $p$-adic integers). \\
$[x,y]$ & Commutator $x^{-1}y^{-1}xy$. \\
$E(\AA)$ & Noncommutative series over~$\{X_1,\dots, X_d\}$, filtered by~$\deg(X_i)=1$ for~$1\leq i \leq d$. \\
$E_n(\AA)$ & The $n$-th power of the augmentation ideal. \\
$\E(\AA)$ & Noncommutative polynomials over~$\{X_1,\dots, X_d\}$, graded by~$\deg(X_i)=1$. \\
$\Ll(\AA)$ & Free graded $\AA$-Lie algebra on~$\{X_1,\dots, X_d\}$, with~$\deg(X_i)=1$. \\
$\mathcal{U}(\Ll)$ & Universal enveloping algebra of a graded $\AA$-Lie algebra $\Ll$. \\
$F$ & Free pro-$p$ group with~$d$ generators. \\
$R$ & Normal closed subgroup of~$F$ generated by~$r$ relations $\{l_1,\dots, l_r\}$. \\
$Q \coloneq F/R$ & Finitely presented pro-$p$ group with~$d$ generators and~$r$ relations. \\
$\psi_\AA$ & Magnus isomorphism of $\AA$-filtered algebras between
\\ & the completed group algebra of~$F$ over~$\AA$ and~$E(\AA)$. \\
$I(\AA,R)$ & Closed two-sided ideal of~$E(\AA)$ generated by~$\psi_{\AA}(r-1)$ with~$r\in R$.\\
$E(\AA,Q)$ & Quotient of~$E(\AA)$ by~$I(\AA,R)$. \\
$E_n(\AA,Q)$ & The $n$-th power of the augmentation ideal of $E(\AA,Q)$. \\
$\psi_{\AA,Q}$ & Isomorphism of $\AA$-filtered algebras (from Magnus) between
\\ & the completed group algebra of~$Q$ over~$\AA$ and~$E(\AA,Q)$. \\
$Q_n(\mathbb{Z}_p)$ & The $n$-th term of the lower central series of $Q$. \\
$Q_n(\mathbb{F}_p)$ & The $n$-th term of the Zassenhaus filtration of $Q$. \\
$\E(\AA,Q)$ & Quotient of~$\E(\AA)$ isomorphic to $ \bigoplus_{n\in \NN} E_n(\AA,Q)/E_{n+1}(\AA,Q)$. \\
$c_n(\AA,Q)$ & $\AA$-Rank of $E_n(\AA,Q)/E_{n+1}(\AA,Q)$ \\
$gocha(Q,t)$ & Gocha (Golod-Shafarevich) series of~$Q$ given by~$\sum_n c_n(\F_p,Q) t^n$. \\ 
$\Ll(\AA,Q)$ & Quotient of~$\Ll(\AA)$ isomorphic to~$\bigoplus_{n\in \NN} Q_n(\AA)/Q_{n+1}(\AA)$. \\
$a_n(\AA,Q)$ & Rank of $\Ll_n(\AA,Q)$ over $\AA$. \\
$H^\bullet(Q)$ & Continuous group cohomology of $Q$ over $\mathbb{F}_p$. \\
$H^\bullet(Q,t)$ & Poincaré series of~$Q$, given by $\sum_n h^n(Q)t^n$, with $h^n(Q)\coloneq \dim_{\mathbb{F}_p}(H^n(Q))$. \\
$\Gamma=(\bX,\bE)$ & Undirected graph with set of vertices $\bX=\{1,\dots ,k\}$ and set of edges $\bE$. \\
$\Gamma_n$ & Set of $n$-cliques (complete subgraphs with $n$ vertices) of $\Gamma$. \\
$(i_1,\dots, i_n)\in \Gamma_n$ & An $n$-cliques of $\Gamma$, satisfying~$i_1<i_2<\dots <i_n$. \\
${}_iG$ & A finitely presented pro-$p$ group.\\
$\{{}_ix_j\}_{j=1}^{{}_id}$, $\{{}_il_j\}_{j=1}^{{}_ir}$ & Systems of generators and relations of~${}_iG$.\\
$G \coloneq \{{}_iG \}_{i=1}^k$ & A family of finitely presented pro-$p$ groups. \\
$\E(\AA,G)$ & Family $\{\E(\AA,{}_iG) \}_{i=1}^k$ of graded locally finite $\AA$-algebras. \\
$\Ll(\AA,G)$ & Family $\{\Ll(\AA,{}_iG) \}_{i=1}^k$ of graded locally finite Lie-$\AA$-algebras. \\
$\A\coloneq \{{}_i\A\}_{i=1}^k$ & $\A$ is either $G$, $\E(\AA, G)$ or $\Ll(\AA,G)$. \\
$\Gamma \A$ & Graph product of $\A$ by $\Gamma$. \\
${}'\Gamma\coloneq ({}'\bX, {}'\bE)$, ${}'\A$ & ${}'\Gamma$ is a subgraph of~$\Gamma$ and~${}'\A$ is the subfamily of~$\A$ indexed by~${}'\bX$. \\
$\iota_{{}'\Gamma/\Gamma}$ & Canonical map from~${}'\Gamma {}'\A\to \Gamma \A$. \\
\bottomrule
\end{tabular}
\caption{Summary of notations used in the paper.}
\end{table}

\newpage

\section*{Prerequisites on algebras and the Magnus isomorphism}
We introduce some well-known facts on graded/filtered $\AA$-Lie algebras. For further background, we suggest that the reader consult~\cite{lazard1965groupes}, \cite{HAMZA2023172}, \cite{leoni2024zassenhaus} and~\cite{Hamza25}. We only study pro-$p$ groups in this section. We refer to the last section for results in the abstract case.

\subsection*{Pro-$p$ groups and Lie algebras} In this subsection, we recall some facts on pro-$p$ groups and some related Lie algebras.



$\bullet$ The ring~$\AA$ is either seen as a filtered ring (with trivial filtration) or as a graded ring (with trivial gradation). This allows us to make the abuse of notations~$\grad(\AA)=\AA$.
A filtered algebra (resp.\ graded, resp.\ graded Lie) is said to be over~$\AA$ if it is an $\AA$-module. If $(A, \{A_n\}_{n\in \NN})$ is a filtered algebra, we define the degree of an element~$a\in A$, denoted by $\omega_A(a)$, as the least integer $n$ such that $a$ is in $A_n\setminus A_{n+1}$. We also have a notion of degree of graded rings. We say that~$\phi\colon A \to B$ is a morphism of filtered algebra if $\omega_B(\phi(a))\geq \omega_A(a)$. A map $\phi$ is strictly filtered if for every~$b\in {\rm Im}(\phi)$ there exists $a\in A$ such that $\phi(a)=b$ and $\omega_A(a)=\omega_B(b)$.

$\bullet$ We assume the extra condition that~$\F_p$-graded Lie algebras are restricted, i.e.\ endowed with an operation~${\bullet}^p$ satisfying some specific properties (see~\cite[Section~$12.1$]{DDMS} for further references). A graded (Lie) $\AA$-algebra~$\A\coloneq \bigoplus_{n\in \NN} \A_n$ is called locally finite if~$\rk_\AA \A_n$ is finite for every positive integer~$n$. In this paper all of our graded (Lie)~$\AA$-algebras are locally finite.

$\bullet$ We have a functor~$\grad$ from the category of $\AA$-filtered algebras to $\AA$-graded algebras. Let us refer to \cite[Chapitre Premier, §$1,2$]{lazard1965groupes} for further details. We denote by~$\U(\A)$ the $\AA$-universal envelope of a $\AA$-graded Lie algebra $\A$.

$\bullet$ Let $Q$ be a finitely presented pro-$p$ group with set of generators $\{x_1,\dots,x_d\}$ and relations $\{l_1,\dots,l_r\}$.
We fix a presentation:
\begin{equation}\tag{$\P_Q$}
1\to R \to F \to^{\pi_Q} Q \to 1.
\end{equation}

$\bullet$ Assume that each variable $X_u$ of~$E(\AA)$ is endowed with degree~$1$. Then~$E(\AA)$ is a filtered algebra, and we denote its filtration by~$\{E_n(\AA)\}_{n\in \NN}$.
Magnus~\cite[Chapitre~II, Partie~$3$]{lazard1965groupes} gave an isomorphism of $\AA$-filtered algebras~$\psi_\AA$ between the completed group algebra of~$F$ over~$\AA$ and~$E(\AA)$. For every~$1\leq i \leq d$, the morphism~$\psi_\AA$ sends~$x_i$ to~$1+X_i$. We denote by~$I(\AA,R)$ the closed two sided ideal of $E(\AA)$ generated by $\psi_{\AA}(r-1)$ for $r$ in~$R$. We define~$\I(\AA,R)\coloneq \bigoplus_{n\in \NN} \I_n(\AA,R))$, where $I_n(\AA,R)\coloneq I(\AA,R)\cap E_n(\AA)$. Consider $E(\AA,Q)\coloneq E(\AA)/I(\AA,R)$ and~$\E(\AA,Q)\coloneq \E(\AA)/\I(\AA,R)$. The $\AA$-algebra~$E(\AA,Q)$ is endowed with a (quotient) filtration, that we denote by~$\{E_n(\AA,Q)\}_{n\in \NN}$. We also have~$\E(\AA,Q)\coloneq \bigoplus_{n\in \NN}\E_n(\AA,Q)$ where~$\E_n(\AA,Q)\coloneq E_n(\AA,Q)/E_{n+1}(\AA,Q)$. From the Magnus isomorphism and the presentation~$(\P_Q)$, we infer an isomorphism of filtered~$\AA$-algebras~$\psi_{\AA,Q}$ between~$E(\AA,Q)$ and the completed group algebra of~$Q$ over~$\AA$. We deduce presentations
\begin{equation*}
\begin{aligned}
0 \to I(\AA, R)\to^{\iota_{\AA,Q}} E(\AA)\to^{e\pi_{\AA,Q}} E(\AA,Q)\to 0,
\\ 0\to \I(\AA, R) \to^{\grad(\iota_{\AA,Q})} \E(\AA)\to^{\grad(e\pi_{\AA,Q})} \E(\AA,Q)\to 0.
\end{aligned}
\end{equation*}
  Observe that~$(E(\AA,Q),\{E_n(\AA,Q)\}_{n\in \NN})$ is isomorphic to the completed group algebra of~$Q$ over $\AA$ filtered by the $n$-th power of the augmentation ideal. 

$\bullet$ From Magnus isomorphism, we deduce an isomorphism of graded Lie $\AA$-algebras:~$\Ll(\AA) \simeq \bigoplus_n F_n(\AA)/F_{n+1}(\AA)$. We define $\Ll(\AA, Q)$ from~$(\P_Q)$ by the quotient of~$\Ll(\AA)$ isomorphic, through the Magnus isomorphism, to~$\bigoplus_{n\in \NN} \Ll_n(\AA,Q)$ where $\Ll_n(\AA,Q)\coloneq Q_n(\AA)/Q_{n+1}(\AA)$. In this paper, we assume (or show) that $\Ll(\AA,Q)$ is free over~$\AA$.  Since~$Q$ is finitely generated, we define
$$c_n(\AA,Q)\coloneq \rk_\AA \E_n(\AA,Q),\quad \text{and} \quad a_n(\AA,Q)\coloneq \rk_\AA \Ll_n(\AA,Q).$$
From~\cite[Theorem~$1.3$]{hartlfox} and~\cite[Corollary~$1.4$]{HAMZA2023172}, if $\Ll(\AA,Q)$ is torsion-free over $\AA$, then~$\U(\Ll(\AA,Q))\simeq \E(\AA,Q)$, which is also torsion-free by the PBW Theorem.

$\bullet$ When $\AA$ is clear from the context, in order to simplify the notations, we prefer to write $E(Q)$, $\E(Q)$, $\Ll(Q)$, $I(R)$, $\I(R)$... rather than $E(\AA, Q)$, $\E(\AA, Q)$, $\Ll(\AA, Q)$, $I(\AA, R)$, $\I(\AA, R)$...  

\subsection*{Graph theory and algebraic structures} 
We denote an~$n$-clique of $\Gamma$, i.e.\ a maximal complete subgraph of~$\Gamma$ with~$n$ vertices, by its $n$-uplet vertices $(i_1,\dots, i_n)$. We order this~$n$-uplet of vertices by~$i_j<i_{j+1}$. We define~$\Gamma_n$ the set of~$n$-cliques of~$\Gamma$. 
\\We consider $\A\coloneq \{ {}_1\A,\dots,{}_k\A\}$ a family of objects in one of the following categories:
\begin{enumerate}
\item finitely generated pro-$p$ groups, where $[\bullet , \bullet]$ denotes the commutator, and $\langle \bullet \rangle$ is the normal closed subgroup generated by $\bullet$,
\item (locally finite) $\AA$-graded Lie algebras, where $[\bullet , \bullet]$ denotes the Lie-bracket, and $\langle \bullet \rangle$ is the $\AA$-Lie-ideal generated by $\bullet$,
\item finitely generated $\AA$-filtered algebras, where $[X , Y]\coloneq XY-YX$ for $X$ and $Y$ elements in a finitely generated $\AA$-filtered algebra, and $\langle \bullet \rangle$ is the closed two-sided ideal generated by $\bullet$.
\end{enumerate} 
All of these categories are endowed with a coproduct. For every~$i\in \bX$, we have canonical maps~${}_i\A \to \coprod_{j=1}^k {}_j\A$. We define the graph product~$\Gamma \A$ of the family $\A$ by $\Gamma$~by:
$$\Gamma\A\coloneq \coprod_{i=1}^k {}_i\A/\langle \lbrack \iota_u({}_ua),\iota_v({}_va)\rbrack| \quad \{u,v\}\in \bE \rangle.$$
Let~${}'\Gamma\coloneq ('\bX, '\bE)$ be a subgraph of~$\Gamma$. We define the subfamily~${}'\A\coloneq \{{}_j\A\}_{j\in {}'\bX}$ of~$\A$ indexed by~${}'\bX$. This allows us to define the graph product~${}'\Gamma ('\A)$. We also infer canonical maps~$\iota_{{}' \Gamma/ \Gamma}\colon {}'\Gamma ({}'\A)\to \Gamma \A$, and~$\iota_{\Gamma/ {}'\Gamma}\colon \Gamma \A \to {}'\Gamma {}'\A$.

Let $G\coloneq \{ {}_1G,\dots ,{}_k G\}$ be a family of finitely presented pro-$p$ groups. For~$1\leq i \leq k$, we denote by $\{{}_ix_1,\dots, {}_ix_{{}_id}\}$ a set of generators and~$\{ {}_il_1,\dots, {}_il_{{}_ir}\}$ a set of relations of~${}_iG$, for~$1\leq i \leq k$. We have a free presentation~${}_iG\simeq {}_iF/{}_iR$. Let~${}_i\E(\AA)$ be the graded algebra on~${}_iX_j$ over~$\AA$ where~$1\leq j \leq {}_id$, and every variable~${}_iX_j$ has degree one. We also define~$\I(\AA, {}_iR)$ the two sided ideal of~${}_i\E(\AA)$ generated, through the Magnus isomorphism, by~$\grad({}_iR)$. This allows us to define~$\E(\AA,{}_iG)\coloneq {}_i\E(\AA)/\I(\AA,{}_iR)$. 

\section{Proof of Theorem \ref{answerminac} and some applications}
This section proves Theorem~\ref{answerminac} using techniques from the author~\cite{HAMZA2023172}, Lazard~\cite{lazard1965groupes} and Leoni~\cite[Chapter $1$, §1.1-§1.3]{leoni2024zassenhaus}. Then we apply these techniques to infer Proposition~\ref{ast and graph}, which is a result on the torsion-freeness of graph products over some algebras.

We assume here that $Q$ is a finitely generated pro-$p$ group and $\Ll(\Z_p,Q)$ is torsion-free. Observe that \cite[Corollary $4.2$]{QUILLEN1968411} allows us to recover the notations from~\cite{HAMZA2023172}, i.e.\ for every positive integer~$n$: 
$$Q_n(\Z_p)=\{q\in Q\mid  \psi_{\Z_p,Q}(q-1)\in E_n(\Z_p,Q) \}.$$


\subsection{Proof of Theorem~\ref{answerminac}}
Strictly filtered maps play a fundamental role in our proof. We refer to the previous section and~\cite[Chapter $1$, §1.1-§1.3]{leoni2024zassenhaus} for general references on them. Recall, from Equations \eqref{equation minac} and \eqref{equation minac2}, that it is sufficient to show the following result. 

\begin{theo}\label{answerminac2}
For every positive integer~$n$, we have~$c_n(\F_p,Q)=c_n(\Z_p,Q)$. 
\end{theo}

\begin{rema}\label{Marmo-Riley-Weigel}
Let us also note that Theorem~\ref{answerminac2} can be deduced from \cite[Theorem~B]{WeigelRiley} and the associated universal enveloping algebras. Here we present an alternative proof that does not involve~$p$-restricted Lie algebras.
\end{rema}
Let us introduce the group algebra $\AA[Q]$, which is, through the Magnus isomorphism, dense in $E(\AA,Q)$. Thus, we have a map 
$$\phi_{p,Q}\colon E(\Z_p,Q)\to E(\F_p,Q); \quad \text{defined by }\sum_{q\in Q} a_q q \in \Z_p\lbrack Q\rbrack \mapsto \sum_{q\in Q} \overline{a_q}q\in \F_p\lbrack Q\rbrack,$$
 where $\overline{a_q}$ is the image of $a_q$ in $\Z_p$ modulo $p\Z_p$. This map is a surjection of filtered~$\Z_p$-algebras. 
By abuse of notation, we denote by $\phi_p$ the map $\phi_{p,F}\colon E(\Z_p)\to E(\F_p)$. This application maps a series $P\coloneq \sum_w a_w w$ to $\phi_p(P)\coloneq \sum_w \overline{a_w}w$, for $w$ monomials, $a_w$ coefficients in $\Z_p$ and $\overline{a_w}$ in $\F_p$ the image of $a_w$ modulo~$p\Z_p$. This map is surjective and strictly filtered.

Let us also note that the map~$e\pi_{\AA, Q}$ is defined, through the Manus isomorphism, by the application which sends~$\sum_{f\in F} a_f f$ in~$\AA \lbrack F \rbrack$ to the element~$\sum_{q\in Q} \left(\sum_{f\mid \pi_Q(f)=q} a_f\right) q$ in~$\AA\lbrack Q\rbrack$.

\begin{lemm}\label{strictfiltr}
We have the following commutative diagram, where all maps are strictly filtered:

\centering{
\begin{tikzcd}
0 \arrow[r] &  I(\Z_p) \arrow[r, "\iota_{\Z_p, Q}", hook] &  E(\Z_p) \arrow[r, "e\pi_{\Z_p,Q}", two heads] \arrow[dd, "\phi_p", two heads] &   {E(\Z_p,Q)} \arrow[r] \arrow[dd, "\phi_{p,Q}", two heads] &   0 \\
               &                                                              &                                                                   &                                                          &    \\
0 \arrow[r]   & I(\F_p) \arrow[r,"\iota_{\F_p, Q}", hook]                                 &   E(\F_p) \arrow[r, "e\pi_{\F_p,Q}", two heads]                                   & {E(\F_p,Q)} \arrow[r]                                 &   0
\end{tikzcd}}

\justifying
Thus we deduce the commutative diagram:

\centering{
\begin{tikzcd}
0 \arrow[r] &  \I(\Z_p) \arrow[r, "\grad(\iota_{\Z_p,Q})", hook] &  \E(\Z_p) \arrow[r, "\grad(e\pi_{\Z_p,Q})", two heads] \arrow[dd, "\grad(\phi_p)", two heads] &   {\E(\Z_p,Q)} \arrow[r] \arrow[dd, "\grad(\phi_{p,Q})", two heads] &   0 \\
               &                                                              &                                                                   &                                                          &    \\
0 \arrow[r]   & \I(\F_p) \arrow[r,"\grad(\iota_{\F_p, Q})", hook]                                 &   \E(\F_p) \arrow[r, "\grad(e\pi_{\F_p,Q})", two heads]                                   & {\E(\F_p,Q)} \arrow[r]                                 &   0
\end{tikzcd}}
\end{lemm}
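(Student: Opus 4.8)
The plan is to establish the first (filtered) commutative diagram with all maps strictly filtered, and then obtain the second (graded) diagram by applying the exact functor $\grad$; the key point is that strictness of a filtered map is exactly what guarantees $\grad$ turns a short exact sequence of filtered modules into a short exact sequence of graded modules, so once strictness is in place the second diagram is essentially formal.

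\textbf{Step 1: the two rows are exact and strictly filtered.} The top and bottom rows are the presentations $0\to I(\AA,R)\to^{\iota_{\AA,Q}} E(\AA)\to^{e\pi_{\AA,Q}} E(\AA,Q)\to 0$ recorded in the Prerequisites, for $\AA=\Z_p$ and $\AA=\F_p$ respectively. Exactness is immediate from the definitions $E(\AA,Q)\coloneq E(\AA)/I(\AA,R)$ and $I(\AA,R)=\ker(e\pi_{\AA,Q})$. For strictness: $\iota_{\AA,Q}$ is the inclusion of $I(\AA,R)$ filtered by $I_n(\AA,R)=I(\AA,R)\cap E_n(\AA)$, which is strict by the very definition of the induced filtration; $e\pi_{\AA,Q}$ is strict because $E(\AA,Q)$ carries the quotient filtration, so any $b\in E_n(\AA,Q)$ lifts to some $a\in E_n(\AA)$ with $\omega(a)=\omega(b)=n$. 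These are the standard facts about sub- and quotient-filtrations (cf.\ Lazard~\cite[Chapitre Premier, §1,2]{lazard1965groupes}).

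\textbf{Step 2: the vertical maps are strictly filtered and the squares commute.} Commutativity of the right square: both composites $E(\Z_p)\to E(\F_p,Q)$ send $\sum_{f\in F} a_f f$ to $\sum_{q\in Q}\big(\sum_{f\mid\pi_Q(f)=q}\overline{a_f}\big)q$ — going right then down reduces coefficients mod $p$ first, going down then right collects fibres first; since reduction mod $p$ is additive these agree. Commutativity of the left square then follows by restriction, since $\phi_p$ maps $I(\Z_p)=I(\Z_p,R)$ into $I(\F_p)=I(\F_p,R)$ (the generators $\psi_{\Z_p}(r-1)$ map to $\psi_{\F_p}(r-1)$). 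Surjectivity of $\phi_p$ and $\phi_{p,Q}$ is clear, and surjectivity of the restriction $\phi_p|_{I(\Z_p)}\colon I(\Z_p)\to I(\F_p)$ follows by a diagram chase (or directly: a generator of $I(\F_p)$ lifts to the corresponding generator of $I(\Z_p)$). Strictness of $\phi_p$: writing $P=\sum_w a_w w$ over monomials $w$, one has $\omega(P)=\min\{|w|: a_w\notin pE(\Z_p)\text{-coefficient survives}\}$... more carefully, $\phi_p(P)\in E_n(\F_p)$ means every monomial of degree $<n$ has coefficient in $p\Z_p$; subtracting $p$ times the corresponding lift produces $P'\in E_n(\Z_p)$ with $\phi_p(P')=\phi_p(P)$, giving strictness. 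Strictness of $\phi_{p,Q}$ follows from strictness of $\phi_p$ together with strictness of $e\pi_{\Z_p,Q}$ and $e\pi_{\F_p,Q}$ and commutativity (a short chase: lift $b\in E_n(\F_p,Q)$ along $e\pi_{\F_p,Q}$ strictly, lift that along $\phi_p$ strictly, then push down along $e\pi_{\Z_p,Q}$).

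\textbf{Step 3: apply $\grad$.} Since every map in the first diagram is strictly filtered, applying the functor $\grad$ yields maps of graded $\AA$-algebras, and $\grad$ carries each short exact sequence of filtered modules with strict maps to a short exact sequence of graded modules (this is the standard fact that for a strict map $\grad(\ker)=\ker(\grad)$ and $\grad(\mathrm{im})=\mathrm{im}(\grad)$; see~\cite[Chapitre Premier]{lazard1965groupes} or~\cite[Chapter 1, §1.1–§1.3]{leoni2024zassenhaus}). This gives exactness of both rows of the second diagram, with $\grad(\iota_{\AA,Q})$ the inclusion of $\I(\AA,R)$ and $\grad(e\pi_{\AA,Q})$ the projection onto $\E(\AA,Q)$, matching the presentations $0\to\I(\AA,R)\to\E(\AA)\to\E(\AA,Q)\to 0$ from the Prerequisites. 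Functoriality of $\grad$ preserves commutativity of the squares, and $\grad$ of a surjective strict map is surjective, so $\grad(\phi_p)$, $\grad(\phi_{p,Q})$ remain epimorphisms. The main obstacle is purely the bookkeeping in Step 2 — verifying strictness of $\phi_p$ and $\phi_{p,Q}$ directly from the coefficient description and the commutativity chase; none of it is deep, but it is the only place where something has to be checked rather than quoted.
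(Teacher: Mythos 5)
Your proposal is correct and follows essentially the same route as the paper: strictness of the rows comes from the sub- and quotient-filtrations, strictness of $\phi_p$ from the coefficient description, strictness of $\phi_{p,Q}$ from the same lift–push commutativity chase the paper uses, and the graded diagram from the standard fact that $\grad$ sends strict exact sequences to exact sequences (the paper quotes Leoni's Lemma~$1.3.6$ for this and Forr\'e for commutativity, where you verify commutativity directly). The only cosmetic difference is your discussion of a ``left square'' and of $\phi_p|_{I(\Z_p)}$, which is not part of the stated diagrams and is only needed later in Theorem~\ref{endanswermin}.
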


\justifying
\begin{proof}
The first diagram is commutative (see for instance~\cite[Proposition~$4.3$]{Forre}). Furthermore, by definition, the maps defined in the rows are all strictly filtered.

Since the map $\phi_p\colon E(\Z_p)\to E(\F_p)$ is surjective and strictly filtered, then from~\cite[Lemma $1.1.5$]{leoni2024zassenhaus}, the map~$e\pi_{\F_p,Q}\circ \phi_p$ is strict. Let us now show that~$\phi_{p,Q}$ is strict. For this purpose, we take $u$ in $E(\F_p,Q)$ of degree~$n$, and we construct~$v$ in~$E(\Z_p,Q)$ of degree~$n$ such that $\phi_{p,Q}(v)=u$. 

There exists $\widetilde{u}$ in $E(\Z_p)$ of degree $n$, such that $e\pi_{\F_p,Q}\circ \phi_p(\widetilde{u})=u$. Let us denote by~$m$ the degree of~$v\coloneq e\pi_{\Z_p,Q}(\widetilde{u})$, so $m\geq n$. Since the diagram is commutative, then~$\phi_{p,Q}(v)=u$, thus $m\leq n$. Therefore~$m=n$, and so $\phi_{p,Q}$ is strictly filtered.
Consequently, all maps of the previous diagram are strictly filtered. We conclude with \cite[Lemma $1.3.6$]{leoni2024zassenhaus}.
\end{proof}

\begin{rema}
Lemma~\ref{strictfiltr} holds independently of the hypothesis that~$\Ll(\Z_p, Q)$ is torsion-free.
\end{rema}

Let us now study the kernel of $\grad(\phi_{p,Q})$. For this purpose, we need the following Lemmata:

\begin{lemm}\label{PBW torsion freeness}
The following conditions are equivalent:

    $(i)$ The $\Z_p$-graded module $\Ll(\Z_p, Q)$ is free.

    $(ii)$ The $\Z_p$-graded module $\E(\Z_p, Q)$ is free.
    \\Moreover, under these conditions, we have~$\U(\Ll(\Z_p, Q))\simeq \E(\Z_p, Q)$.
\end{lemm}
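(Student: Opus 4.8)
The plan is to prove the equivalence by first reducing ``free'' to ``torsion-free'' and then treating the two implications. Since $\Z_p$ is a discrete valuation ring and all the graded modules in play are locally finite (each homogeneous component is a finitely generated $\Z_p$-module, because $Q$ is finitely generated), a graded $\Z_p$-module here is free if and only if it is torsion-free: every component, being finitely generated and torsion-free over the principal ideal domain $\Z_p$, is free, and a graded module which is free in each degree is free. So it suffices to prove that $\Ll(\Z_p,Q)$ is torsion-free if and only if $\E(\Z_p,Q)$ is. Throughout I will use the canonical surjective graded $\Z_p$-algebra homomorphism $\theta\colon \U(\Ll(\Z_p,Q))\to \E(\Z_p,Q)$, induced by the Lie map $\Ll_n(\Z_p,Q)\to \E_n(\Z_p,Q)$ sending the class of $q\in Q_n(\Z_p)$ to the class of $\psi_{\Z_p,Q}(q-1)$ modulo $E_{n+1}(\Z_p,Q)$; this is well defined and a morphism of Lie algebras by the standard congruences $\psi_{\Z_p,Q}(q_1q_2-1)\equiv \psi_{\Z_p,Q}(q_1-1)+\psi_{\Z_p,Q}(q_2-1)$ and $\psi_{\Z_p,Q}([q_1,q_2]-1)\equiv [\psi_{\Z_p,Q}(q_1-1),\psi_{\Z_p,Q}(q_2-1)]$ modulo higher degree, and it is surjective because $\E(\Z_p,Q)$ is generated in degree $1$.

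For $(i)\Rightarrow(ii)$, assume $\Ll(\Z_p,Q)$ is free, hence torsion-free. By \cite[Theorem~$1.3$]{hartlfox} combined with \cite[Corollary~$1.4$]{HAMZA2023172} (recalled in the Prerequisites), $\theta$ is an isomorphism $\U(\Ll(\Z_p,Q))\simeq \E(\Z_p,Q)$. Since $\Ll(\Z_p,Q)$ is free as a graded $\Z_p$-module, the PBW Theorem shows $\U(\Ll(\Z_p,Q))$ is again free as a graded $\Z_p$-module, whence $\E(\Z_p,Q)$ is free. Once the equivalence of $(i)$ and $(ii)$ is established, this argument also yields the ``moreover'' statement $\U(\Ll(\Z_p,Q))\simeq \E(\Z_p,Q)$ under either hypothesis.

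For $(ii)\Rightarrow(i)$, assume $\E(\Z_p,Q)$ is torsion-free and suppose, for contradiction, that $\Ll_n(\Z_p,Q)$ has nonzero torsion for some $n\geq 1$. Over the discrete valuation ring $\Z_p$, a nonzero torsion module contains an element annihilated by $p$, so there is $q\in Q_n(\Z_p)\setminus Q_{n+1}(\Z_p)$ with $q^p\in Q_{n+1}(\Z_p)$. Put $u\coloneq \psi_{\Z_p,Q}(q-1)\in E_n(\Z_p,Q)$. Using the identification $Q_m(\Z_p)=\{q\in Q\mid \psi_{\Z_p,Q}(q-1)\in E_m(\Z_p,Q)\}$ recalled at the start of the section (\cite[Corollary~$4.2$]{QUILLEN1968411}), the hypothesis $q\notin Q_{n+1}(\Z_p)$ says precisely that $u\notin E_{n+1}(\Z_p,Q)$, i.e.\ the class $\bar u$ of $u$ in $\E_n(\Z_p,Q)$ is nonzero. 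On the other hand $\psi_{\Z_p,Q}(q^p-1)=(1+u)^p-1= pu + (\text{a $\Z_p$-combination of } u^2,\dots,u^p)$, and every power $u^i$ with $i\ge 2$ lies in $E_{2n}(\Z_p,Q)\subseteq E_{n+1}(\Z_p,Q)$ since $n\ge 1$; hence $\psi_{\Z_p,Q}(q^p-1)\equiv pu \pmod{E_{n+1}(\Z_p,Q)}$. As $q^p\in Q_{n+1}(\Z_p)$ gives $\psi_{\Z_p,Q}(q^p-1)\in E_{n+1}(\Z_p,Q)$, we conclude $pu\in E_{n+1}(\Z_p,Q)$, i.e.\ $p\,\bar u=0$ in $\E_n(\Z_p,Q)$ with $\bar u\neq 0$, contradicting torsion-freeness of $\E(\Z_p,Q)$. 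Therefore $\Ll(\Z_p,Q)$ is torsion-free, and hence free by the preliminary reduction.

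The only genuinely arithmetic ingredient is the congruence $\psi_{\Z_p,Q}(q^p-1)\equiv p\,\psi_{\Z_p,Q}(q-1)$ modulo $E_{n+1}(\Z_p,Q)$, coupled with Quillen's description of $Q_n(\Z_p)$; the rest is formal. I expect the main point requiring care to be the legitimacy of working degreewise in the locally finite graded $\Z_p$-setting --- in particular reducing general torsion to $p$-torsion and ensuring $\theta$ and the reduction ``torsion-free $\Leftrightarrow$ free'' are applied correctly component by component --- which is exactly where finite generation of $Q$ (hence local finiteness of $\Ll(\Z_p,Q)$ and $\E(\Z_p,Q)$) enters.
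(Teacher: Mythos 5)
Your reduction of ``free'' to ``torsion-free'' degreewise, your treatment of $(i)\Rightarrow(ii)$, and the ``moreover'' clause all match the paper, which likewise invokes \cite[Corollary~$1.4$]{HAMZA2023172} (PBW) and \cite[Theorem~$1.3$]{hartlfox}. The problem is in your elementary argument for $(ii)\Rightarrow(i)$, where the paper instead cites \cite[Theorem~$2.4$]{labute2006mild}; your replacement has a genuine gap.

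The gap is the step ``the hypothesis $q\notin Q_{n+1}(\Z_p)$ says precisely that $u\notin E_{n+1}(\Z_p,Q)$.'' This invokes the identification $Q_{n+1}(\Z_p)=\{q\in Q\mid \psi_{\Z_p,Q}(q-1)\in E_{n+1}(\Z_p,Q)\}$, i.e.\ that the lower central series coincides with the dimension subgroup filtration $D_\bullet$. The paper establishes this (via \cite[Corollary~$4.2$]{QUILLEN1968411}) only \emph{under the hypothesis that $\Ll(\Z_p,Q)$ is torsion-free} --- which is exactly the conclusion you are trying to reach in this direction, so you may not use it. Unconditionally one only has the easy inclusion $Q_{n+1}(\Z_p)\subseteq D_{n+1}$; the reverse inclusion is the dimension subgroup problem and is precisely where the difficulty lies. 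Without it, your computation shows only that $p\,\bar u=0$ in $\E_n(\Z_p,Q)$, and torsion-freeness of $\E_n(\Z_p,Q)$ then forces $\bar u=0$, i.e.\ $q\in D_{n+1}$. Since you cannot rule out $q\in D_{n+1}\setminus Q_{n+1}$, no contradiction results: the argument merely proves that every $p$-torsion class of $\Ll_n(\Z_p,Q)$ dies under the canonical map $\Ll_n(\Z_p,Q)\to\E_n(\Z_p,Q)$, which is consistent with $\Ll(\Z_p,Q)$ having torsion. To close the gap you would need to show, under $(ii)$, that $D_{n+1}\cap Q_n(\Z_p)=Q_{n+1}(\Z_p)$; that is the actual content of Labute's theorem and is not formal.
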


\begin{proof}
    The PBW Theorem (see \cite[Corollary $1.4$]{HAMZA2023172}) provides the implication $(i)\implies (ii)$. Observe that \cite[Theorem~$2.4$]{labute2006mild} shows $(ii)\implies (i)$.
    
    The last assertion is given by~\cite[Theorem~$1.3$]{hartlfox}. His proof can be easily adapted to the profinite case.
\end{proof}

\begin{lemm}\label{surj}
We have $\ker(\grad(\phi_{p,Q}))=p\E(\Z_p,Q)$, and so an isomorphism:
$$\E(\Z_p,Q)/p\E(\Z_p,Q)\simeq \E(\F_p,Q).$$
\end{lemm}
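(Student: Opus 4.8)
The plan is to show the two inclusions $p\E(\Z_p,Q)\subseteq \ker(\grad(\phi_{p,Q}))$ and $\ker(\grad(\phi_{p,Q}))\subseteq p\E(\Z_p,Q)$ degree by degree, and then read off the quotient isomorphism. The first inclusion is immediate: the map $\phi_{p,Q}\colon E(\Z_p,Q)\to E(\F_p,Q)$ is reduction of coefficients modulo $p$, hence it kills $pE(\Z_p,Q)$, and passing to the associated graded it kills $p\E(\Z_p,Q)$. So the real content is the reverse inclusion $\ker(\grad(\phi_{p,Q}))\subseteq p\E(\Z_p,Q)$, equivalently that $\grad(\phi_{p,Q})$ induces an injection $\E(\Z_p,Q)/p\E(\Z_p,Q)\hookrightarrow \E(\F_p,Q)$ (surjectivity being clear since $\phi_{p,Q}$, and hence $\grad(\phi_{p,Q})$, is onto).

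The key input is Lemma~\ref{PBW torsion freeness}: since $\Ll(\Z_p,Q)$ is torsion-free, $\E(\Z_p,Q)$ is a free $\Z_p$-module, say with a homogeneous basis $\{e_\lambda\}$ with $e_\lambda$ of degree $n_\lambda$. Pick an element $\xi\in \E_n(\Z_p,Q)$ with $\grad(\phi_{p,Q})(\xi)=0$. Write $\xi=\sum_{n_\lambda=n} a_\lambda e_\lambda$ with $a_\lambda\in\Z_p$. I would then argue that the reductions $\overline{e_\lambda}$ form an $\F_p$-basis of $\E_n(\F_p,Q)$, so that $\grad(\phi_{p,Q})(\xi)=\sum \overline{a_\lambda}\,\overline{e_\lambda}=0$ forces $a_\lambda\equiv 0\pmod p$ for all $\lambda$, i.e.\ $\xi\in p\E_n(\Z_p,Q)$. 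To see that the $\overline{e_\lambda}$ of degree $n$ form an $\F_p$-basis of $\E_n(\F_p,Q)$, I would use the commutative (graded) diagram of Lemma~\ref{strictfiltr}, whose bottom row presents $\E(\F_p,Q)$ as $\E(\F_p)/\I(\F_p,R)$, together with the surjectivity and strictness of $\grad(\phi_{p,Q})$: the $\overline{e_\lambda}$ span $\E_n(\F_p,Q)$, and a count of ranks via $c_n(\F_p,Q)\le c_n(\Z_p,Q)$ (from surjectivity of $\grad(\phi_{p,Q})$) combined with $c_n(\Z_p,Q)=\dim_{\F_p}\E_n(\Z_p,Q)/p\E_n(\Z_p,Q)\le c_n(\F_p,Q)$ (which is exactly the first inclusion plus surjectivity) pins the dimensions down to equality, so the spanning family $\overline{e_\lambda}$ is in fact a basis. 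This yields $\ker(\grad(\phi_{p,Q}))=p\E(\Z_p,Q)$ and the isomorphism $\E(\Z_p,Q)/p\E(\Z_p,Q)\simeq \E(\F_p,Q)$.

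Alternatively, and perhaps more cleanly, I would route the argument through the exact-sequence rows of the second diagram of Lemma~\ref{strictfiltr}: since all four vertical maps and both rows are strict (hence the rows stay exact after grading), a diagram chase shows $\grad(\phi_{p,Q})$ restricted to $\E(\Z_p)/\ker$ matches $\grad(\phi_p)$ modulo the ideals; because $\grad(\phi_p)\colon \E(\Z_p)\to\E(\F_p)$ is simply coefficient reduction on noncommutative polynomials, its kernel is visibly $p\E(\Z_p)$, and strictness of the maps $\grad(\iota_{\ast,Q})$ ensures $\grad(\phi_p)$ carries $\I(\Z_p,R)$ onto $\I(\F_p,R)$ with kernel $p\E(\Z_p)\cap\I(\Z_p,R)=p\I(\Z_p,R)$ (here using freeness again). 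Taking quotients gives $\ker(\grad(\phi_{p,Q}))=p\E(\Z_p,Q)$.

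The main obstacle I anticipate is the passage from ``kernel is $p$-something before grading'' to ``kernel is $p$-something after grading'': $\mathrm{gr}$ is not exact in general, so one genuinely needs the strictness of all the maps in Lemma~\ref{strictfiltr} (so that the graded rows remain short exact) together with the freeness of $\E(\Z_p,Q)$ over $\Z_p$ from Lemma~\ref{PBW torsion freeness} (so that multiplication by $p$ behaves predictably and $p\E\cap(\text{graded piece})$ is just $p\cdot(\text{graded piece})$). Once those two facts are in hand, the argument is essentially bookkeeping.
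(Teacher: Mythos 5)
Your easy inclusion $p\E(\Z_p,Q)\subseteq\ker(\grad(\phi_{p,Q}))$ and your identification of the relevant ingredients (strictness, freeness of $\E(\Z_p,Q)$) are fine, but both of your arguments for the reverse inclusion are circular. In the main argument the whole weight rests on the inequality $c_n(\Z_p,Q)=\dim_{\F_p}\E_n(\Z_p,Q)/p\E_n(\Z_p,Q)\le c_n(\F_p,Q)$, which you justify by ``the first inclusion plus surjectivity''. But the first inclusion plus surjectivity only give a surjection $\E_n(\Z_p,Q)/p\E_n(\Z_p,Q)\twoheadrightarrow \E_n(\Z_p,Q)/\ker_n\simeq\E_n(\F_p,Q)$, i.e.\ the \emph{opposite} inequality $c_n(\Z_p,Q)\ge c_n(\F_p,Q)$; the inequality you actually need is, given the easy inclusion, literally equivalent to the statement of the lemma. (The equality $c_n(\Z_p,Q)=c_n(\F_p,Q)$ is Theorem~\ref{answerminac2}, whose proof uses this lemma, so it is not available here.) The alternative route has the same problem one step earlier: you need $\grad(\phi_p)$ to carry $\I(\Z_p)$ \emph{onto} $\I(\F_p)$, and this is exactly Theorem~\ref{endanswermin}, which the paper deduces \emph{from} Lemma~\ref{surj}. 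Strictness of the inclusions $\iota_{\AA,Q}$ only says that the filtration on $I(\AA)$ is the induced one; it does not provide a degree-preserving lift of elements of $\I(\F_p)$ into $\I(\Z_p)$.

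The non-circular argument (the paper's) works at the filtered level rather than with dimension counts. Since $\Ll(\Z_p,Q)$ is torsion-free, $\E(\Z_p,Q)$ is a free graded $\Z_p$-module, hence by Lazard $E(\Z_p,Q)$ is filtered-free; this is what gives $pE(\Z_p,Q)\cap E_n(\Z_p,Q)=pE_n(\Z_p,Q)$ and, using torsion-freeness of each $\E_n(\Z_p,Q)$, an isomorphism $\grad\bigl(pE(\Z_p,Q)\bigr)\simeq p\E(\Z_p,Q)$. Note that the nontrivial content sits at the level of the filtered ideal $pE(\Z_p,Q)$, not, as in your last paragraph, at the level of the intersection of $p\E$ with a graded piece (which is trivially $p$ times that piece). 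One then observes that $0\to pE(\Z_p,Q)\to E(\Z_p,Q)\to E(\F_p,Q)\to 0$ is a strict exact sequence of filtered modules and applies Leoni's graded-exactness lemma to this sequence; the resulting graded exact sequence is precisely the claim. Your closing paragraph correctly names the two facts that are needed, but the bookkeeping you sketch does not deploy them where they are actually required.
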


\begin{proof}
From~Lemma~\ref{PBW torsion freeness}, the~graded~$\Z_p$-module~$\E(\Z_p,Q)$ is torsion-free. Then from~\cite[Chapitre~I, $(2.3.12)$]{lazard1965groupes}, the module~$E(\Z_p,Q)$ is filtered-free over~$\Z_p$. Consequently, we deduce, from~\cite[Chapitre~I, $(2.3.12)$]{lazard1965groupes}, for every positive integer~$n$ a well-defined map:
$$pE_n(\Z_p,Q)\to p\E_n(\Z_p,Q);\quad pu\mapsto p\overline{u},$$
where $\overline{u}$ is the image of $u$ modulo $E_{n+1}(\Z_p,Q)$. Since $\E_n(\Z_p,Q)$ is torsion-free, then $p\overline{u}=0$ implies that $u$ is in $E_{n+1}(\Z_p,Q)$. Thus, the kernel of the previous map is $pE_{n+1}(\Z_p,Q)$. Since for every integer~$n$, $pE(\Z_p,Q)\cap E_n(\Z_p,Q)=pE_n(\Z_p,Q)$, we deduce:
$$\grad(pE(\Z_p,Q))\coloneq \bigoplus_{n\in \NN} pE_n(\Z_p,Q)/pE_{n+1}(\Z_p,Q)\simeq p\E(\Z_p,Q).$$
Furthermore, we have the following strict exact sequence:
$$0\to pE(\Z_p,Q)\to E(\Z_p,Q)\to^{\phi_{p,Q}} E(\F_p,Q)\to 0.$$
From \cite[Lemma $1.3.6$]{leoni2024zassenhaus}, we obtain the graded exact sequence:
$$0\to \grad(pE(\Z_p,Q))\simeq p\E(\Z_p,Q)\to \E(\Z_p,Q)\to^{\grad(\phi_{p,Q})} \E(\F_p,Q)\to 0.$$
\end{proof}

We conclude by showing that the image of the map~$\mu_p\coloneq \grad(\phi_p)|_{\I(\Z_p)}$ is~$\I(\F_p)$.

\begin{theo}\label{endanswermin}
We have a surjective map $\mu_p$ which makes the following diagram commutative:

\centering{
\begin{tikzcd}
0 \arrow[r] &  \I(\Z_p) \arrow[r, "\grad(\iota_{\Z_p,Q})", hook] \arrow[dd, two heads, "\mu_p"] &  \E(\Z_p) \arrow[r, "\grad(e\pi_{\Z_p,Q})", two heads] \arrow[dd, "\grad(\phi_p)", two heads] &   {\E(\Z_p,Q)} \arrow[r] \arrow[dd, "\grad(\phi_{p,Q})", two heads] &   0 \\
               &                                                              &                                                                   &                                                          &    \\
0 \arrow[r]   & \I(\F_p) \arrow[r,"\grad(\iota_{\F_p, Q})", hook]                                 &   \E(\F_p) \arrow[r, "\grad(e\pi_{\F_p,Q})", two heads]                                   & {\E(\F_p,Q)} \arrow[r]                                 &   0
\end{tikzcd}}
\end{theo}

\begin{proof}
\justifying
We define $\mu_p\coloneq \grad(\phi_p)\circ \grad(\iota_{\Z_p, Q})$ and compute its image in $\E(\F_p)$. This image is isomorphic to 
$$\left( \I(\Z_p)+p\E(\Z_p)\right)/p\E(\Z_p)\simeq \I(\Z_p)/\left(\I(\Z_p)\cap p\E(\Z_p)\right).$$
From Lemma~\ref{PBW torsion freeness}, the~$\Z_p$-module~$\E(\Z_p,Q)$ is torsion-free. Then~$\I(\Z_p)\cap p\E(\Z_p)=p\I(\Z_p)$. Thus the image of $\mu_p$ is $\I(\Z_p)/p\I(\Z_p)$. 

Let us now show that $\I(\Z_p)/p\I(\Z_p)\simeq \I(\F_p)$. From Lemma \ref{surj}, we have the chain of isomorphisms: 
$$\E(\F_p)/\I(\F_p)\simeq \E(\F_p,Q)\simeq \E(\Z_p,Q)/p\E(\Z_p,Q)\simeq \left(\E(\Z_p)/\I(\Z_p)\right)/p\left(\E(\Z_p)/\I(\Z_p)\right).$$
Similarly, since $\E(\Z_p,Q)$ is torsion-free, we deduce from the canonical map $p\E(\Z_p)\to p\E(\Z_p,Q), pu\mapsto p \grad(e\pi_{\Z_p,Q})(u)$, the isomorphism:
$$p\E(\Z_p,Q)\simeq p\E(\Z_p)/p\I(\Z_p).$$
Thus, we have 
$$\E(\F_p)/\I(\F_p)\simeq \left( \E(\Z_p)/p\E(\Z_p)\right)/\left(\I(\Z_p)/p\I(\Z_p)\right).$$ 
So $\I(\F_p)\simeq \I(\Z_p)/p\I(\Z_p)$. Then the image of $\mu_p$ is $\I(\F_p)$ and the kernel of $\mu_p$ is~$p\I(\Z_p)$.
\end{proof}

\begin{proof}[Proof Theorem \ref{answerminac2}]
Let us write $i_n(\AA)\coloneq \rk_{\AA}\I_n(\AA)$. From Theorem \ref{endanswermin}, we have :
$$c_n(\Z_p,F)=c_n(\F_p,F), \quad \text{and} \quad c_n(\Z_p,Q)\geq c_n(\F_p,Q), \quad \text{and} \quad i_n(\Z_p)\geq i_n(\F_p).$$ From $c_n(\AA,F)=i_n(\AA)+c_n(\AA,Q)$, it follows that~$c_n(\Z_p,Q)=c_n(\F_p,Q)$.
\end{proof}

\begin{rema}\label{kernel rema}
The proof of Theorem~\ref{answerminac2} also shows that $i_n(\F_p)=i_n(\Z_p)$ for every positive integer $n$.
\end{rema}

\subsection{Graph products of graded algebras} 
We aim here to study torsion on graph products of algebras using techniques from the previous section. Let us start with the following lemma:

\begin{lemm}\label{PBW application bis}
Assume that $G$ satisfies $(\ast_\AA)$, then the $\AA$-universal envelope of~$\Gamma \Ll (\AA,G)$, denoted by $\mathcal{U}(\Gamma \Ll (\AA,G))$ is isomorphic to $\Gamma \E(\AA, G)$.
\end{lemm}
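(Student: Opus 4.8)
The statement to prove is Lemma~\ref{PBW application bis}: under hypothesis $(\ast_\AA)$, we have $\mathcal{U}(\Gamma\Ll(\AA,G))\simeq \Gamma\E(\AA,G)$. The plan is to reduce everything to the behaviour of the universal enveloping functor under colimits and quotients, together with the freeness consequences of $(\ast_\AA)$. First I would recall that $\mathcal{U}$, being a left adjoint (to the forgetful functor from $\AA$-algebras to $\AA$-Lie algebras, with the restricted variant over $\F_p$), commutes with colimits; in particular it sends the coproduct $\coprod_{i=1}^k \Ll(\AA,{}_iG)$ in the category of graded (restricted) $\AA$-Lie algebras to the coproduct $\coprod_{i=1}^k \mathcal{U}(\Ll(\AA,{}_iG))$ in the category of graded $\AA$-algebras, and it sends a quotient by a Lie ideal $\langle S\rangle$ to the quotient by the two-sided ideal generated by the image of $S$. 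Concretely, writing $\Gamma\Ll(\AA,G)=\coprod_i\Ll(\AA,{}_iG)/\langle[\Ll(\AA,{}_uG),\Ll(\AA,{}_vG)]\mid\{u,v\}\in\bE\rangle$, applying $\mathcal{U}$ yields $\mathcal{U}(\coprod_i\Ll(\AA,{}_iG))$ modulo the two-sided ideal generated by the elements $XY-YX$ with $X\in\Ll(\AA,{}_uG)$, $Y\in\Ll(\AA,{}_vG)$, $\{u,v\}\in\bE$.

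Next I would identify each piece. Since $G$ satisfies $(\ast_\AA)$, each $\Ll(\AA,{}_iG)$ is torsion-free (hence free) over $\AA$, so by the PBW theorem in the form already invoked in the excerpt (\cite[Corollary~$1.4$]{HAMZA2023172}, and Lemma~\ref{PBW torsion freeness} in the $\Z_p$ case) we have $\mathcal{U}(\Ll(\AA,{}_iG))\simeq \E(\AA,{}_iG)$ as graded $\AA$-algebras. Substituting this identification into the coproduct gives $\mathcal{U}(\coprod_i\Ll(\AA,{}_iG))\simeq\coprod_i\E(\AA,{}_iG)$, the coproduct now taken in graded $\AA$-algebras. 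Under this isomorphism the Lie-bracket relations $XY-YX$ indexing the quotient are precisely the commutator relations $[\iota_u X,\iota_v Y]\coloneq \iota_u(X)\iota_v(Y)-\iota_v(Y)\iota_u(X)$ appearing in the definition of $\Gamma\E(\AA,G)$; moreover it suffices to impose them for $X,Y$ ranging over $\AA$-module generators of $\Ll(\AA,{}_uG)$ resp.\ $\Ll(\AA,{}_vG)$, and since $\Ll(\AA,{}_iG)$ generates $\E(\AA,{}_iG)$ as an algebra (it sits in degree-generating position via the PBW filtration), the two-sided ideal they generate coincides with the one generated by all of $\E(\AA,{}_uG)$ commuting with $\E(\AA,{}_vG)$ for $\{u,v\}\in\bE$. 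Hence the quotient is exactly $\Gamma\E(\AA,G)$, proving the isomorphism.

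The step I expect to be the main obstacle is the careful verification that the two-sided ideal generated by the Lie-bracket relations $[\Ll(\AA,{}_uG),\Ll(\AA,{}_vG)]$ inside $\coprod_i\E(\AA,{}_iG)$ agrees with the two-sided ideal generated by $[\E(\AA,{}_uG),\E(\AA,{}_vG)]$ used in the definition of $\Gamma\E(\AA,G)$. This is a purely algebraic point but needs the observation that if $[X,a]$ and $[X,b]$ lie in an ideal $J$ then so does $[X,ab]=[X,a]b+a[X,b]$, so commuting $\Ll(\AA,{}_uG)$ past $\Ll(\AA,{}_vG)$ forces, by induction on word length in the PBW basis, all of $\E(\AA,{}_uG)$ to commute past all of $\E(\AA,{}_vG)$ modulo $J$. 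The other mild subtlety is making sure the adjunction/colimit argument is valid in the restricted setting over $\F_p$, i.e.\ that $\mathcal{U}$ of a restricted Lie algebra is still a left adjoint and still commutes with coproducts and quotients by restricted ideals; this is standard (see \cite[§12.1]{DDMS}) and I would simply cite it. With these two points in hand, the chain of isomorphisms closes.
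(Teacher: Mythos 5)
Your proof is correct, and its skeleton is the same as the paper's: both arguments rest on the fact that the universal envelope functor $\mathcal{U}$ is a left adjoint together with the PBW identification $\mathcal{U}(\Ll(\AA,{}_iG))\simeq \E(\AA,{}_iG)$ supplied by $(\ast_\AA)$. The one place you genuinely diverge is in how the graph-product structure is transported across $\mathcal{U}$. The paper additionally invokes the fact that $\mathcal{U}$ preserves (finite) products and then cites Panov's description of graph products to conclude that $\mathcal{U}$ preserves graph products wholesale, so the ideal bookkeeping is hidden in that citation. You instead work directly with the presentation of $\Gamma\Ll(\AA,G)$ as a coproduct modulo a Lie ideal, push that presentation through the left adjoint, and then verify by hand --- via the Leibniz identity $[X,ab]=[X,a]b+a[X,b]$ and induction on PBW word length --- that the two-sided ideal generated by $[\Ll(\AA,{}_uG),\Ll(\AA,{}_vG)]$ coincides with the one generated by $[\E(\AA,{}_uG),\E(\AA,{}_vG)]$ appearing in the definition of $\Gamma\E(\AA,G)$. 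Your route is more self-contained and makes explicit a point the paper leaves implicit (that it suffices to impose commutation on Lie-algebra generators), at the cost of the extra ideal computation; the paper's route is shorter but leans on the external colimit-of-products description. Your remark about checking the adjunction in the restricted setting over $\F_p$ is also the right caveat and is handled the same way in the paper (all $\F_p$-graded Lie algebras are taken restricted throughout).
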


\begin{proof}
The universal envelope functor $\U$ from the category of locally finite graded $\AA$-Lie algebras to the category of $\AA$-associative algebras is a left adjoint, so preserves colimits. Furthermore, it preserves products (see \cite[§2, Proposition 2]{bourbaki2007groupes}). Thus $\U$ preserve graph products (see \cite[§2]{panovpseries}), and so
$$\U(\Gamma \Ll(\AA,G))\simeq \Gamma \U(\Ll(\AA,G),$$
where $\Ll(\AA,G):=\{\Ll(\AA,{}_1G),\dots, \Ll(\AA,{}_kG)\}$. Since $G$ satisfies $(\ast_\AA)$, we observe (see for instance \cite[Corollary $1.4$]{HAMZA2023172}) that $\U(\Ll(\AA,{}_iG))\simeq \E(\AA, {}_iG)$. Thus
$$\U(\Gamma \Ll(\AA,G))\simeq \Gamma \E(\AA, G).$$
\end{proof}

We now state the starting point of the proof of Theorem~\ref{main result}.

\begin{theo}\label{mainresultfree}
Let~$G$ be a family of finitely generated pro-$p$ groups satisfying~$(\ast_\AA)$. Let $\Gamma^f$ be the free graph on $k$ vertices. 

We have:
$\Ll(\AA, \Gamma^f G)\simeq \Gamma^f \Ll(\AA,G)$ and $\E(\AA, \Gamma^f G)\simeq \Gamma^f \E(\AA,G)$. Furthermore:
$$\U(\Ll(\Z_p, \Gamma^f G))\simeq \E(\Z_p, \Gamma^f G).$$
\end{theo}

\begin{proof}
The case~$\AA\coloneq \F_p$ was proved by~\cite[Theorem~$1.2$]{Lichtman}. Indeed only the abstract case was considered, but we can conclude for the pro-$p$ case using
Lemma~\ref{PBW application bis} and \cite[Page~$312$]{DDMS} ($c_n(\F_p,Q)=\rk_{\F_p} I^0_n(G)/I^0_{n+1}(G)$ where~$I_n^0(G)$ is the $n$-th power of the augmentation ideal of the group algebra of~$G$ over~$\F_p$).

The case~$\AA\coloneq \Z_p$ follows from~\cite[Corollary]{Smel} (originally, only the abstract group case was considered, but an adaptation of this proof to the pro-$p$ case is straightforward).
Indeed, we have $\Ll(\Z_p, \Gamma^f G)\simeq \Gamma^f\Ll(\Z_p,G)$ which are both torsion-free. Furthermore, since every pro-$p$ group~${}_iG$ is finitely generated, we observe that~$\bigcap_{n\in \NN}G_n(\Z_p)\subset \bigcap_{n\in \NN}G_n(\F_p)=\{1\}$. Finally, from \cite[Corollary~$1.4$]{HAMZA2023172}, we conclude that $\U(\Ll(\Z_p, \Gamma^f G))\simeq \E(\Z_p, \Gamma^f G)$.
\end{proof}

The rest of this section aims to illustrate techniques used in the proof of Theorem~\ref{endanswermin}. Let $G\coloneq \{ {}_1G,\dots, {}_kG\}$ be a family of finitely generated pro-$p$ groups satisfying~$(\ast_{\Z_p})$. We introduce~$\E(\Z_p, G)\coloneq \{\E(\Z_p, {}_1G), \dots, \E(\Z_p, {}_kG)\}$. We show that the algebra~$\Gamma\E(\Z_p,G)$ is torsion-free.
 
\begin{prop}\label{ast and graph}
    If the family $G$ satisfies the condition $(\ast_{\Z_p})$  then $\Gamma \E(\Z_p,G)$ and~$\Gamma \Ll(\Z_p, G)$ are free over~$\Z_p$. 
\end{prop}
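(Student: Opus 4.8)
The plan is to reduce the statement to the free-graph case (Theorem~\ref{mainresultfree}) by building each graph product up from the free one one edge at a time, and at each step controlling torsion via the strictly-filtered-maps machinery used in the proof of Theorem~\ref{endanswermin}. First I would reduce the two claims to one: by Lemma~\ref{PBW application bis} we have $\U(\Gamma \Ll(\Z_p,G))\simeq \Gamma\E(\Z_p,G)$, so by the PBW theorem (in the form of \cite[Corollary~$1.4$]{HAMZA2023172} together with Lemma~\ref{PBW torsion freeness}) $\Gamma\E(\Z_p,G)$ is $\Z_p$-free if and only if $\Gamma\Ll(\Z_p,G)$ is $\Z_p$-free. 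So it suffices to show $\Gamma\E(\Z_p,G)$ is torsion-free over~$\Z_p$.

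Next I would set up the reduction mod~$p$. Write $\mathbb{F}_p$-versions of everything; since $G$ satisfies $(\ast_{\Z_p})$, Lemma~\ref{surj} applied to each ${}_iG$ gives $\E(\Z_p,{}_iG)/p\E(\Z_p,{}_iG)\simeq \E(\F_p,{}_iG)$, and since forming a graph product of graded algebras is a colimit construction it commutes with the right-exact functor $-\otimes_{\Z_p}\F_p$; hence $\Gamma\E(\Z_p,G)\otimes_{\Z_p}\F_p\simeq \Gamma\E(\F_p,G)$. A graded $\Z_p$-module that is finitely generated in each degree is $\Z_p$-free precisely when it is torsion-free, and one standard way to detect this is to compare $\F_p$-ranks: $\Gamma\E(\Z_p,G)$ is torsion-free in degree~$n$ iff $\dim_{\F_p}\bigl(\Gamma\E(\Z_p,G)\otimes\F_p\bigr)_n=\rk_{\Z_p}\bigl(\Gamma\E(\Z_p,G)\bigr)_n$, where the right side can be computed after inverting~$p$, i.e.\ from $\Gamma\E(\Q_p,G)$. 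So the real task is a rank count: show $\dim_{\F_p}(\Gamma\E(\F_p,G))_n=\dim_{\Q_p}(\Gamma\E(\Q_p,G))_n$ for all~$n$, knowing the analogous equality for each factor ${}_iG$ (this is the content of $(\ast_{\Z_p})$ via Lemma~\ref{PBW torsion freeness}, which forces $\dim_{\F_p}\E(\F_p,{}_iG)_n=\rk_{\Z_p}\E(\Z_p,{}_iG)_n$).

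The key step, and the one I expect to be the main obstacle, is a Hilbert-series formula for the graph product of graded algebras. One expects a clean product-over-cliques identity: writing $g_{{}_iG}(t)=\sum_n \rk\,\E_n({}_iG)t^n$ for the Hilbert series of a factor, there should be a universal rational expression $g_{\Gamma G}(t)=\Phi\bigl(\{g_{{}_iG}(t)\}_{i};\Gamma\bigr)$ (the "clique polynomial" combination, matching the $h^n(\Gamma G)$ formula in Proposition~\ref{computekosz}) that depends only on the Hilbert series of the factors and the combinatorics of~$\Gamma$, not on the ring of coefficients. Such a formula can be proved by induction on the number of edges of~$\Gamma$ using Theorem~\ref{leoni result} (the amalgamated-product/strictly-filtered-map statement from Leoni's thesis), exactly as in the one-edge-at-a-time argument: adding an edge $\{u,v\}$ realizes $\Gamma G$ as an amalgam over a graph product of the smaller family, and the strict-filtered exact sequences (as in Lemma~\ref{strictfiltr} and Theorem~\ref{endanswermin}) let one track dimensions additively through $\grad$. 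Once the Hilbert series of $\Gamma\E(\F_p,G)$ and of $\Gamma\E(\Q_p,G)$ are both given by the same $\Phi$ evaluated at Hilbert series that agree degreewise (by the factor-level $(\ast_{\Z_p})$ hypothesis), the two series coincide, the rank count goes through, and torsion-freeness of $\Gamma\E(\Z_p,G)$ follows; transporting back across PBW gives freeness of $\Gamma\Ll(\Z_p,G)$ as well.

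Alternatively, and perhaps more cleanly, one can avoid explicit Hilbert series by arguing directly with the strict exact sequences: realize $\Gamma G$ as an iterated amalgamated product (over free graph products of subfamilies), apply Theorem~\ref{leoni result} to each amalgamation step to get that the relevant maps of completed group algebras are strictly filtered, pass to $\grad$ via \cite[Lemma~$1.3.6$]{leoni2024zassenhaus}, and run the mod-$p$ comparison of Theorem~\ref{endanswermin} at every step to propagate the isomorphism $\E(\Z_p,-)/p\simeq\E(\F_p,-)$ and torsion-freeness from the base (the free graph product, handled by Theorem~\ref{mainresultfree}) up to $\Gamma G$. Either way, the crux is establishing that amalgamation along the edges preserves the strictly-filtered/torsion-free structure, which is where Leoni's Theorem~\ref{leoni result} does the heavy lifting.
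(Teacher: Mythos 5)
Your overall strategy is sound but it is genuinely different from the paper's. The paper proves Proposition~\ref{ast and graph} by a direct, coordinate-level argument: it writes the defining ideal of $\Gamma\E(\Z_p,G)$ inside $\E(\Z_p)$ as a sum $\Gamma^f\I(\Z_p,R)+\I(\Z_p,\Gamma)$ of the ideal coming from the free graph product (torsion-free quotient by Theorem~\ref{mainresultfree}) and the RAAG ideal (torsion-free quotient by the known structure of $\U(\Ll(\Z_p,G_\Gamma))$), uses the surjections $\mu_p$ of Theorem~\ref{endanswermin} and Remark~\ref{kernel rema} to lift compatible $\F_p$-bases of the degree-$n$ pieces of these two ideals and of their intersection to $\Z_p$-bases, and then checks by hand that $pu\in\Gamma\I_n(\Z_p,R)$ forces $u\in\Gamma\I_n(\Z_p,R)$. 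In particular it uses neither amalgamated products nor Leoni's theorem; those only enter later, in Section~2. Your primary route instead detects torsion by comparing $\dim_{\F_p}$ and $\dim_{\Q_p}$ degreewise, which is closer in spirit to how the paper later proves Theorem~\ref{main theo Zp} (via Lemma~\ref{torsion and Q}). That route can be made to work: base change of the colimit gives $\Gamma\E(\Z_p,G)\otimes\F_p\simeq\Gamma\E(\F_p,G)$ and $\Gamma\E(\Z_p,G)\otimes\Q_p\simeq\Gamma\E(\Q_p,G)$, the factor Hilbert series over $\F_p$ and $\Q_p$ agree by $(\ast_{\Z_p})$ together with Lemma~\ref{surj} and Theorem~\ref{answerminac2}, and a field-independent clique formula for the Hilbert series of the graph product then finishes the count. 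What each approach buys: yours is conceptually cleaner and reuses the rank-comparison philosophy, at the price of needing the Hilbert-series formula and the field-level identification $\Gamma\E(\fq,G)\simeq\E(\fq,\Gamma G)$ (Theorem~\ref{iso finite-case} plus Quillen); the paper's is more elementary and self-contained within Section~1, which is presumably why the proposition can be stated and proved before the amalgamation machinery is introduced.

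Two concrete corrections. First, the amalgam decomposition you invoke is by \emph{removing a vertex}, not by \emph{adding an edge}: if $a,b$ are non-adjacent one has $\Gamma G\simeq({}^a\Gamma\,{}^aG)\coprod_{{}^c\Gamma\,{}^cG}({}^b\Gamma\,{}^bG)$, and the induction runs on the number of vertices (this is exactly the induction in Theorem~\ref{iso finite-case} and Proposition~\ref{computation gocha series}, where the Hilbert-series recursion $1/g_{\Gamma G}=1/g_{{}^a\Gamma{}^aG}+1/g_{{}^b\Gamma{}^bG}-1/g_{{}^c\Gamma{}^cG}$ comes from \cite[Lemme~$5.1.10$]{Lemaire}); "adding an edge" does not produce an amalgam. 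Second, your alternative route is circular as stated: the mod-$p$ comparison of Theorem~\ref{endanswermin} uses torsion-freeness of $\E(\Z_p,Q)$ as an input (it is needed to get $\I(\Z_p)\cap p\E(\Z_p)=p\I(\Z_p)$), so you cannot "run it at every amalgamation step to propagate torsion-freeness" --- torsion-freeness at each step is precisely what is to be proved. Stick with the Hilbert-series version, with the vertex induction, and the argument goes through.
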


\begin{proof}
From Lemma~\ref{PBW torsion freeness}, it is sufficient to show that~$\Gamma \E(\Z_p,G)$ is torsion-free. Recall that $\E(\AA,{}_iG)\coloneq {}_i\E(\AA)/\I(\AA,{}_iR)$, and we define $\Gamma^f\I(\AA,R)$ the two sided ideal in $\E(\AA)$ generated by $\I(\AA,{}_iR)$ for every $1\leq i\leq k$, and $\I(\AA, \Gamma)$ the ideal in $\E$ generated by $[{}_iX_u,{}_jX_v]$ with $\{i,j\}\in \bE$. Observe that $\Gamma \I(\AA,R)\coloneq \Gamma^f\I(\AA,R)+\I(\AA,\Gamma)$ is the kernel of the map $\E(\AA) \to \Gamma \E(\AA,G)$. Take $u$ in $\E_n(\Z_p)$. We show that if $p u$ is in~$(\Gamma \I)_n(\Z_p,R)$, then~$u$ is also in $(\Gamma \I)_n(\Z_p,R)$.
    
    From Theorem~\ref{mainresultfree}, it follows that that the algebra $\E(\Z_p, \Gamma^f G)\simeq \coprod_{i=1}^k\E(\Z_p, {}_iG)$ is torsion-free. Denote by~$G_\Gamma$ a Right Angled Artin group presented by relations $[{}_ix_u,{}_jx_v]$ with $\{i,j\}\in \bE$. Then, the algebra $\E(\Z_p, \Gamma)\coloneq \E(\Z_p)/\I(\Z_p, \Gamma) \simeq \U(\Ll(\Z_p, G_\Gamma))$ is torsion-free (see for instance~\cite{wade2016lower} or~\cite{bartholdi2020right} for further references). Thus, from Theorem~\ref{endanswermin}, we deduce surjections
    $$\mu_{p,\Gamma^f G}\colon \Gamma^f \I(\Z_p,R)\to \Gamma^f \I(\F_p,R) , \quad \text{and } \mu_{p,\Gamma}\colon \I(\Z_p, \Gamma)\to \I(\F_p, \Gamma).$$ 
   We define $\eta(\F_p)$ (resp. $\delta(\F_p)$) a $\F_p$ basis of $\Gamma_n^f\I(\F_p, R)$ (resp. $\I_n(\F_p, \Gamma)$). Using $\mu_{p,\Gamma^f G}$, $\mu_{p,\Gamma}$ and Remark~\ref{kernel rema}, we lift these basis to $\eta(\Z_p)$ (resp. $\delta(\Z_p)$) which is a basis of~$(\Gamma^f\I)_n(\Z_p, R)$ (resp.~$\I_n(\Z_p, \Gamma)$). Recall that $\E(\Z_p, \Gamma^f G)$ and $\E(\Z_p, G_\Gamma)$ are torsion-free. Thus if $p u$ is in $(\Gamma^f\I)_n(\Z_p,R)$ (resp. $\I_n(\Z_p,\Gamma)$) then $u$ is in $(\Gamma^f\I)_n(\Z_p,R)$ (resp.~$\I_n(\Z_p,\Gamma)$).
    
    We observe that $\J(\Z_p)\coloneq \Gamma^f\I(\Z_p,R) \cap \I(\Z_p,\Gamma)$ is an ideal in $\E(\Z_p)$ and $\E(\Z_p)/\J(\Z_p)$ is torsion-free over $\Z_p$. Thus we rewrite $\eta(\Z_p):=\{\eta'(\Z_p),\beta(\Z_p)\}$ (resp. $\delta(\Z_p):=\{\delta'(\Z_p), \beta(\Z_p)\}$) a $\Z_p$-basis of $(\Gamma^f\I)_n(\Z_p, \Rr)$ (resp. $\I_n(\Z_p, \Gamma)$) where 
 $\beta(\Z_p)$ is a $\Z_p$-basis of $(\Gamma^f\I)_n(\Z_p, \Rr)\cap \I_n(\Z_p, \Gamma)$.

In particular, the family $\eta(\Z_p)\cup \delta(\Z_p)\coloneq \{\eta'(\Z_p), \delta'(\Z_p), \beta(\Z_p)\}$ is $\Z_p$-free, and generates~$(\Gamma\I)_n(\Z_p,R)$. Furthermore, if $pu$ is in $(\Gamma\I)_n(\Z_p,R)$, then we write:
     $$pu\coloneq \sum {\alpha_i \eta_i'(\Z_p)+\alpha_j \delta_j'(\Z_p)+\alpha_l \beta_l(\Z_p).}$$
     Which provides, mod $p$ the relation:
     $$0=\sum{ \overline{\alpha_i}\eta_i'(\F_p)+ \overline{\alpha_j}\delta_j'(\F_p)+\overline{\alpha_l}\beta_l(\F_p)},$$
where $\overline{\alpha}$ is the image of $\alpha$ modulo $p\Z_p$.     Therefore, $\alpha_i$, $\alpha_j$ and $\alpha_l$ are all in $p\Z_p$, thus we write:
     $$p u=p  \sum{\left( \alpha_i'\eta_i'(\Z_p)+ \alpha_j'\delta_j'(\Z_p)+\alpha_l'\beta_l(\Z_p) \right)}.$$
     So $u=\sum_{i,j,l}{ \left(\alpha_i'\eta_i'(\Z_p)+ \alpha_j'\delta_j'(\Z_p)+\alpha_l'\beta_l(\Z_p)\right)}$ is an element in $\Gamma\I(\Z_p,R)$.
     \end{proof}

\section{Proof of Theorem \ref{main result}}

In order to prove Theorem~\ref{main result}, we need some results on amalgamated products and related filtrations. Leoni~\cite{leoni2024zassenhaus} studied the Zassenhaus filtration of amalgamated products under a natural condition. As we see in this section, his result can be extended for the lower central series. 

Let us denote by~$\fq$ the fraction field of~$\AA$.
We define~$E(\Q_p, Q)$ the completed group algebra of~$Q$ over~$\Q_p$. For every integer~$n$, we denote by~$E_n(\Q_p, Q)$ the~$n$-th power of the augmentation ideal of~$E(\Q_p, Q)$. The algebra~$E(\Q_p, Q)$ is filtered by the family~$\{E_n(\Q_p, Q)\}_{n\in \NN}$.
We consider~$\E_n(\Q_p, Q)\coloneq \grad(E(\Q_p, Q))\coloneq \bigoplus_{n\in \NN}E_n(\Q_p, Q)/E_{n+1}(\Q_p, Q)$. Let us denote by~$\fq\lbrack Q \rbrack$ the group algebra of~$Q$ over~$\fq$, filtered by the~$n$-th power of the augmentation ideal. Since~$Q$ is finitely generated, we observe that~$\E(\fq, Q)\simeq \grad(\fq\lbrack Q \rbrack)$.
Let us also define~$\Ll(\Q_p, Q)\coloneq \Ll(\Z_p, Q)\otimes_{\Z_p}\Q_p$. 
As showed by Quillen~\cite[Theorem]{QUILLEN1968411}, the algebra~$\E(\fq, Q)$ is the universal envelope of~$\Ll(\fq, Q)$.

\begin{theo}[Quillen]
If~$Q$ is a finitely generated pro-$p$ group, we have a graded isomorphism:
$$\U(\Ll(\fq, Q))\simeq \E(\fq, Q).$$
\end{theo}

Let~$Q_1$, $Q_2$ and~$H$ be finitely generated pro-$p$ groups. We assume that we have morphisms~$\lambda_1\colon H \to Q_1$ and~$\lambda_2\colon H \to Q_2$. From~$\lambda_1$ and~$\lambda_2$, we infer morphisms~$\grad(\fq, \lambda_1)\colon \Ll(\fq, H)\to \Ll(\fq, Q_1)$ and~$\grad(\fq, \lambda_2)\colon \Ll(\fq, H)\to \Ll(\fq, Q_2)$. Therefore, we define the amalgamated product:
$$Q\coloneq Q_1\coprod_{H} Q_2.$$
We also have an amalgamated product of~$\Ll(\fq, Q_1)$ and~$\Ll(\fq, Q_2)$ over~$\Ll(\fq, H)$, that we denote by~$\Ll(\fq, Q_1)\coprod_{\Ll(\fq, H)}\Ll(\fq, Q_2)$.
We refer to~\cite{RibesZal} and~\cite[Chapter~$4$]{leoni2024zassenhaus} for more details on amalgamated products.

We have a natural morphism~$\Ll(\fq, Q_1)\coprod_{\Ll(\fq, H)}\Ll(\fq, Q_2)\to \Ll(\fq, Q)$. Leoni~\cite[Theorem~$7$, Chapter~$4$]{leoni2024zassenhaus} showed that the previous morphism is an isomorphism when~$\fq\coloneq \F_p$ and~$\grad(\F_p, \lambda_i)$ is injective for~$i\in \{1,2\}$. This motivates the following definition:  

\begin{defi}
Let~$H$ and~$Q$ be finitely generated pro-$p$ groups. Assume that we have an injective morphism~$\lambda\colon H \to Q$. We say that~$H$ is strictly~$\fq$-embedded in~$Q$ if the morphism~$\grad(\fq, \lambda)\colon \Ll(\fq, H) \to \Ll(\fq, Q)$ is injective.
\end{defi}


Leoni's proof of~\cite[Theorem~$7$, Chapter~$4$]{leoni2024zassenhaus} yields the following result:

\begin{theo}[Leoni]\label{leoni result}
Let~$Q_1$, $Q_2$ and~$H$ be finitely generated pro-$p$ groups such that~$H$ is strictly~$\fq$-embedded in~$Q_1$ and in~$Q_2$. Then the graded morphism
$$\Ll(\fq, Q_1)\coprod_{\Ll(\fq, H)} \Ll(\fq, Q_2)\to \Ll(\fq, Q_1\coprod_{H}Q_2)$$
is an isomorphism.
\end{theo}

\subsection{Field situation}
Let~$G$ be a family of pro-$p$ group satisfying~$(\ast_\AA)$ and~$\Gamma$ be an undirected graph. As before, the family~$G$ defines a family of graded $\fq$-Lie algebra:
$$\Ll(\fq, G)\coloneq \{\Ll(\fq, {}_1G),\dots , \Ll(\fq, {}_kG)\}.$$
We naturally define~$\Gamma \Ll(\fq, G)$.
The goal of this subsection is to show the following result:
\begin{theo}\label{iso finite-case}
We have an isomorphism
$$\Ll(\fq, \Gamma G)\simeq \Gamma \Ll(\fq,G).$$
\end{theo}

The proof will be done by induction on the number of vertices of~$\Gamma$. Let~${}'\Gamma\coloneq ({}'\bX, {}'\bE)$ be a subgraph of~$\Gamma$ and~${}'G$ be the subfamily of~$G$ indexed by~${}'\bX$. We have a canonical map~$\iota_{'\Gamma/\Gamma}\colon {}'\Gamma {}'G \to \Gamma G$ which maps~${}_ix_j$ to~${}_ix_j$ for every~$i\in {}'\bX$ and~$1\leq j \leq {}_id$.

\begin{lemm}\label{injection of graphs}
The map~$\iota_{{}'\Gamma/\Gamma}\colon {}'\Gamma {}'G \to \Gamma G$ is injective. Furthermore, the associated map~$\grad(\AA, \iota_{{}'\Gamma/\Gamma})$ is injective. Similarly, we have an injection~$'\Gamma \E(\F_p, {}'G)\to \Gamma \E(\F_p, G)$.
\end{lemm}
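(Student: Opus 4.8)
The statement asserts that the canonical map ${}'\Gamma{}'G\to\Gamma G$ coming from including the subgraph ${}'\Gamma$ into $\Gamma$ is injective, that the induced map on associated graded Lie algebras $\grad(\AA,\iota_{{}'\Gamma/\Gamma})$ is injective, and the analogous injectivity of $'\Gamma\E(\F_p,{}'G)\to\Gamma\E(\F_p,G)$. The natural strategy is to reduce to the well-understood case of adding one vertex at a time, using the fact that a graph product over $\Gamma$ is an iterated amalgamated product: if $v$ is a vertex of $\Gamma$, then $\Gamma G$ decomposes as an amalgamated product of ${}_vG$-involving and ${}_vG$-free pieces along the graph product over the link $\mathrm{lk}(v)$. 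More concretely, I would first handle the one-vertex-difference case $'\Gamma = \Gamma\setminus\{v\}$, and then chain these inclusions to obtain the general ${}'\Gamma\subseteq\Gamma$.

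\textbf{Step 1: retraction.} The cleanest way to get injectivity of ${}'\Gamma{}'G\to\Gamma G$ at the group level is to exhibit a retraction. The projection $\iota_{\Gamma/{}'\Gamma}\colon \Gamma G\to {}'\Gamma{}'G$ that kills every generator ${}_ix_j$ with $i\notin{}'\bX$ is a well-defined homomorphism (it respects all the commutator relations, since we only collapse some vertex groups to $1$), and the composite $\iota_{\Gamma/{}'\Gamma}\circ\iota_{{}'\Gamma/\Gamma}$ is the identity on ${}'\Gamma{}'G$ since it fixes all generators ${}_ix_j$ with $i\in{}'\bX$. Hence $\iota_{{}'\Gamma/\Gamma}$ is a split injection.

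\textbf{Step 2: passing to graded Lie algebras.} For the second assertion I would argue functorially: applying $\grad(\AA,-)$ to the retraction pair from Step 1 yields $\grad(\AA,\iota_{\Gamma/{}'\Gamma})\circ\grad(\AA,\iota_{{}'\Gamma/\Gamma})=\mathrm{id}$ on $\Ll(\AA,{}'\Gamma{}'G)$, \emph{provided} the relevant map $\grad$-functoriality identifies $\Ll(\AA,{}'\Gamma{}'G)$ with $\Gamma'\Ll(\AA,{}'G)$ and $\Ll(\AA,\Gamma G)$ with $\Gamma\Ll(\AA,G)$ — but at this point in the paper Theorem~\ref{main result} and Theorem~\ref{iso finite-case} may not yet be available in full, so instead I would phrase the argument purely at the level of the filtered group algebras $E(\AA,-)$: the retraction of groups induces a retraction of completed group algebras compatible with the augmentation-ideal filtrations (it is strictly filtered, sending degree $n$ to degree $\le n$ and the section lands in the same degree), and $\grad$ of a split filtered injection is a split graded injection by the strictness argument à la \cite[Lemma 1.3.6]{leoni2024zassenhaus}. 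This gives injectivity of $\grad(\AA,\iota_{{}'\Gamma/\Gamma})$ on $\Ll(\AA,-)$ and simultaneously the injectivity of $'\Gamma\E(\F_p,{}'G)\to\Gamma\E(\F_p,G)$, since $\E(\F_p,-)$ is precisely this associated graded.

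\textbf{Main obstacle.} The genuinely delicate point is \emph{strictness}: a split injection of filtered objects need not have injective associated graded unless the splitting interacts correctly with the filtrations (i.e.\ unless the section is itself filtered and the map is strict). So the crux is to verify that the projection $\Gamma G\to{}'\Gamma{}'G$, and the induced algebra map, are strictly filtered with a strictly filtered section — this is where Leoni's lemmas on strict filtered maps (\cite[§1.1--§1.3]{leoni2024zassenhaus}) do the real work. A subsidiary subtlety is the claim for $\E(\F_p,-)$: one must know that $\E(\F_p,\Gamma G)$ genuinely is the associated graded of the $\F_p$-group algebra (so that the split-injection argument transfers), which follows from the Magnus setup recalled in the Prerequisites section. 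Once strictness is in hand, everything else is formal functoriality, and the iterated-amalgamation picture (or, more simply, the direct retraction in Step 1) makes the reduction to one vertex unnecessary — the retraction exists for any subgraph at once.
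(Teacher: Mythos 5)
Your proposal is correct and matches the paper's proof: the paper likewise observes that $\iota_{\Gamma/{}'\Gamma}\circ\iota_{{}'\Gamma/\Gamma}$ fixes every generator ${}_ix_j$ (resp.\ ${}_iX_j$) with $i\in{}'\bX$, hence is the identity, and concludes all three injectivity claims by functoriality. The strictness worry in your final paragraph is superfluous: the retraction $\iota_{\Gamma/{}'\Gamma}$ is itself a group homomorphism (hence automatically filtered for the lower central and Zassenhaus filtrations), so $\grad(\AA,\iota_{\Gamma/{}'\Gamma})\circ\grad(\AA,\iota_{{}'\Gamma/\Gamma})=\mathrm{id}$ purely by functoriality of $\grad$, which already makes $\grad(\AA,\iota_{{}'\Gamma/\Gamma})$ a split injection without any appeal to Leoni's strictness lemmas.
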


\begin{proof}
For every~$i\in {}'\bX$ and~$1\leq j \leq {}_id$, we observe that:
$$\iota_{\Gamma /{}'\Gamma} \circ \iota_{{}'\Gamma/\Gamma}({}_ix_j)={}_ix_j, \quad \text{and} \quad \grad(\AA, \iota_{\Gamma /{}'\Gamma}) \circ \grad(\AA, \iota_{{}'\Gamma/\Gamma})({}_iX_j)={}_iX_j.$$
Thus, we infer the following commutative diagram:

\begin{center}
\begin{tikzcd}
\Gamma G \arrow[rr, "\iota_{\Gamma/{}'\Gamma}"]                          &  & {}'\Gamma ({}'G) &  & {\Ll(\AA, \Gamma G)} \arrow[rr, "{\grad(\AA, \iota_{\Gamma/{}'\Gamma})}"]                         &  & {\Ll(\AA, {}'\Gamma {}'G)} \\
                                                                         &  &                  &  &                                                                                                  &  &                            \\
{}'\Gamma ({}'G) \arrow[uu, "\iota_{'\Gamma/\Gamma}"] \arrow[rruu, "id"] &  &                  &  & {\Ll(\AA, {}'\Gamma {}'G)} \arrow[uu, "{\grad(\AA, \iota_{'\Gamma/\Gamma})}"] \arrow[rruu, "id"] &  &                           
\end{tikzcd}
\end{center}

\justifying
Therefore, we conclude that~$\iota_{'\Gamma/\Gamma}$ and~$\iota_{'\Gamma/\Gamma, n ,\AA}$ are both injective.
Considering similar diagrams, we also show that we have an injection~${}'\Gamma \E({}'G)\to \Gamma \E(G)$.
\end{proof}

\begin{coro}\label{strict embed}
For every subgraph~${}'\Gamma$ of~$\Gamma$, the pro-$p$ group~${}'\Gamma {}'G$ is strictly~$\fq$-embedded in~$\Gamma G$.
\end{coro}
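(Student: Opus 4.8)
The plan is to deduce Corollary~\ref{strict embed} directly from Lemma~\ref{injection of graphs} together with Theorem~\ref{leoni result} (or rather the definition preceding it), since the statement is essentially a reformulation. First I would recall the definition: a finitely generated pro-$p$ group $H$ is strictly $\fq$-embedded in $Q$ when there is an injective morphism $\lambda\colon H \to Q$ whose associated graded map $\grad(\fq, \lambda)\colon \Ll(\fq, H)\to \Ll(\fq, Q)$ is injective. So I need to exhibit such a $\lambda$ for $H\coloneq {}'\Gamma {}'G$ and $Q\coloneq \Gamma G$, and the natural candidate is $\lambda\coloneq \iota_{{}'\Gamma/\Gamma}$.

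The first step is the injectivity of $\iota_{{}'\Gamma/\Gamma}$ itself, which is exactly the first assertion of Lemma~\ref{injection of graphs}. The second step is the injectivity of the graded map $\grad(\fq, \iota_{{}'\Gamma/\Gamma})\colon \Ll(\fq, {}'\Gamma {}'G)\to \Ll(\fq, \Gamma G)$. Here a small subtlety arises: Lemma~\ref{injection of graphs} is stated for $\grad(\AA, \iota_{{}'\Gamma/\Gamma})$ with $\AA\in\{\F_p,\Z_p\}$, whereas the Corollary concerns the fraction field $\fq$. When $\fq = \F_p$ there is nothing to do. When $\fq = \Q_p$, I would note that $\Ll(\Q_p, Q)\coloneq \Ll(\Z_p, Q)\otimes_{\Z_p}\Q_p$ by definition, and since $G$ satisfies $(\ast_\AA)$ the relevant $\Z_p$-modules are free (hence flat), so tensoring the injection $\grad(\Z_p, \iota_{{}'\Gamma/\Gamma})$ with $\Q_p$ over $\Z_p$ preserves injectivity; this gives injectivity of $\grad(\Q_p, \iota_{{}'\Gamma/\Gamma})$.

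The main (and really only) obstacle is this compatibility between the coefficient rings $\AA$ and the fraction field $\fq$ — one must be slightly careful that the $(\ast_\AA)$ freeness hypothesis is what makes the base change exact, rather than simply quoting Lemma~\ref{injection of graphs} verbatim. Once that is settled, the conclusion is immediate: the pair $(\iota_{{}'\Gamma/\Gamma}, \grad(\fq, \iota_{{}'\Gamma/\Gamma}))$ witnesses that ${}'\Gamma {}'G$ is strictly $\fq$-embedded in $\Gamma G$, for every subgraph ${}'\Gamma$ of $\Gamma$. I would also remark that this Corollary is precisely what is needed to feed the amalgamated-product decompositions of $\Gamma G$ into Theorem~\ref{leoni result} in the inductive proof of Theorem~\ref{iso finite-case}, so the statement is best read as the bridge between Lemma~\ref{injection of graphs} and Leoni's theorem.
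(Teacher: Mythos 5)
Your proposal is correct and follows essentially the same route as the paper: the paper's proof also invokes Lemma~\ref{injection of graphs} for injectivity of $\grad(\AA,\iota_{{}'\Gamma/\Gamma})$ and then concludes by tensoring with $\fq$ over $\AA$. One small remark: the exactness of the base change to $\Q_p$ follows already from the flatness of $\Q_p$ as a $\Z_p$-module (it is a localization), so the freeness of the Lie algebras from $(\ast_\AA)$ is not actually needed for that step.
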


\begin{proof}
From Lemma~\ref{injection of graphs}, we have an injection:
$$\grad(\AA, \iota_{{}'\Gamma/\Gamma})\colon \Ll(\AA, {}'\Gamma {}'G)\to \Ll(\AA, \Gamma).$$
We conclude tensorizing by~$\fq$ over~$\AA$. 
\end{proof}

\begin{proof}[Proof of Theorem~\ref{iso finite-case}]
We argue by induction on~$k$, the number of vertices of~$\Gamma$. If~$k=1$. This is fine. Let us now assume that~$k>1$. We have the following alternative.

$(i)$ If the graph~$\Gamma$ is complete, then we deduce that~$\Gamma G\simeq \prod_{i=1}^k {}_iG$. Thus~$\Ll(\AA, \Gamma G)\simeq \bigoplus_{i=1}^k \Ll(\AA, {}_iG)\simeq \Gamma \Ll(\AA, G)$, and:
$$\Ll(\fq, \Gamma G)\simeq \Ll(\AA, \Gamma G)\bigotimes_{\AA}\fq\simeq \left( \bigoplus_{i=1}^k \Ll(\AA, {}_iG)\right)\bigotimes_{\AA}\fq \simeq \bigoplus_{i=1}^k \Ll(\fq, {}_iG)\simeq \Gamma \Ll(\fq, G).$$

$(ii)$ We assume that~$\Gamma$ is not complete. Then there exists two vertices~$a$ and~$b$ which are not connected by an edge. We define~${}^a\Gamma$, ${}^b\Gamma$ and ${}^c\Gamma$ the subgraphs of~$\Gamma$ with vertices~$\bX\setminus \{a\}$, $\bX\setminus \{b\}$ and~$\bX\setminus \{a,b\}$. Consider~${}^aG\coloneq G\setminus \{{}_{a}G\}$,  ${}^bG \coloneq G\setminus \{{}_{b}G\}$ and~${}^cG \coloneq G\setminus \{ {}_{a}G, {}_{b}G\}$. Note, from Lemma~\ref{injection of graphs} and Proposition~\cite[Proposition~$9.2.1$]{RibesZal}, that we have the following isomorphism:
$$\Gamma G \simeq ({}^a\Gamma{}^aG) \coprod_{({}^c\Gamma {}^cG)} ({}^b\Gamma {}^bG).$$
A similar isomorphism was observed by Green~\cite[Proof of the Lemma~$3.20$]{ruthgraph}. We also recall, from the induction hypothesis, that:
$$\Ll(\fq,{}^i\Gamma{}^iG)\simeq {}^i\Gamma \Ll(\fq,{}^iG),\quad \text{ where } i\in \{a,b,c\}.$$
Furthermore, from Corollary~\ref{strict embed}, the pro-$p$ group~${}^c\Gamma {}^c G$ is strictly~$\fq$-embedded in~${}^a\Gamma {}^a G$ and in~${}^b\Gamma {}^b G$. Thus, from~Theorem~\ref{leoni result}, we conclude
$$\Ll(\fq,\Gamma G)\simeq \Ll(\fq,{}^a\Gamma{}^aG) \coprod_{\Ll(\fq, {}^c\Gamma {}^cG)} \Ll(\fq,{}^b\Gamma {}^bG)\simeq \Gamma \Ll(\fq, G).$$
\end{proof}

\subsection{The case~$\AA=\Z_p$}
To conclude the proof of Theorem~\ref{main result}, we show the following result:

\begin{theo}\label{main theo Zp}
Assume that~$G$ satisfies the condition~$(\ast_{\Z_p})$, then~$\Ll(\Z_p, \Gamma G)$ is torsion-free, and we have:
$$\Ll(\Z_p, \Gamma G)\simeq \Gamma \Ll(\Z_p, G).$$
\end{theo}

We introduce the following notations.

$\bullet$ If~$\Ll$ is a graded Lie algebra over~$\Z_p$, we define~$a_n(\Ll)=\rk_{\Z_p}\Ll_n$.

$\bullet$ If~$\Ll$ is a graded Lie algebra over~$\Q_p$, we define~$a_n(\Ll)=\dim_{\Q_p}\Ll_n$.

\begin{lemm}\label{torsion and Q}
Let~$\Ll$ be a graded-Lie algebra over~$\Z_p$. The following assertions are equivalent:

$\bullet$ For every integer~$n$, we have~$a_n(\Ll\otimes_{\Z_p}\Q_p)=a_n(\Ll)$. 

$\bullet$ The~$\Z_p$-module $\Ll$ is torsion-free.
\end{lemm}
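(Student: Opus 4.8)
The plan is to reduce the equivalence to an elementary statement about finitely generated modules over the principal ideal domain $\Z_p$, treated one homogeneous component at a time. Since $\Ll=\bigoplus_n\Ll_n$ is graded and each $\Ll_n$ is a finitely generated $\Z_p$-module (by local finiteness), an element of $\Ll$ is annihilated by a power of $p$ precisely when each of its homogeneous components is; hence $\Ll$ is torsion-free if and only if every $\Ll_n$ is. Base change along $\Z_p\to\Q_p$ commutes with direct sums, so $(\Ll\otimes_{\Z_p}\Q_p)_n=\Ll_n\otimes_{\Z_p}\Q_p$, and the numerical hypothesis splits into the conditions $\dim_{\Q_p}(\Ll_n\otimes_{\Z_p}\Q_p)=\rk_{\Z_p}\Ll_n$, one for each $n$. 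It therefore suffices to prove: for a finitely generated $\Z_p$-module $M$, one has $\dim_{\Q_p}(M\otimes_{\Z_p}\Q_p)=\rk_{\Z_p}M$ if and only if $M$ is torsion-free, equivalently free, since $\Z_p$ is a PID.

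If $\Ll$ is torsion-free, then each $\Ll_n$ is finitely generated and torsion-free over $\Z_p$, hence free of rank $\rk_{\Z_p}\Ll_n$, and tensoring with $\Q_p$ gives $\dim_{\Q_p}(\Ll_n\otimes_{\Z_p}\Q_p)=\rk_{\Z_p}\Ll_n$ for all $n$. Conversely, I would fix $n$, put $m\coloneq\rk_{\Z_p}\Ll_n=\dim_{\F_p}(\Ll_n/p\Ll_n)$, and invoke Nakayama's lemma to obtain a surjection $\Z_p^{\,m}\twoheadrightarrow\Ll_n$ with kernel $K$; as $K\subseteq\Z_p^{\,m}$, it is torsion-free. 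Since $\Q_p=\Z_p[1/p]$ is flat over $\Z_p$, the sequence $0\to K\otimes_{\Z_p}\Q_p\to\Q_p^{\,m}\to\Ll_n\otimes_{\Z_p}\Q_p\to 0$ is exact, so $\dim_{\Q_p}(\Ll_n\otimes_{\Z_p}\Q_p)=m-\dim_{\Q_p}(K\otimes_{\Z_p}\Q_p)$. If this dimension equals $m$, then $K\otimes_{\Z_p}\Q_p=0$; but a finitely generated torsion-free $\Z_p$-module whose $\Q_p$-base change vanishes must itself be zero, so $\Ll_n\simeq\Z_p^{\,m}$ is free. Running this over all $n$ shows $\Ll$ is torsion-free.

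There is no genuine obstacle here once the problem is localized to a single degree; the only point requiring care is the meaning of $\rk_{\Z_p}$ for a module that may carry torsion, which I take (consistently with the conventions in the rest of the paper) to be $\dim_{\F_p}(M/pM)$, the minimal number of generators, together with the observation that local finiteness forces each $\Ll_n$ to be a genuinely finitely generated $\Z_p$-module, so that Nakayama's lemma and the structure theory of modules over a PID both apply.
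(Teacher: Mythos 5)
Your proof is correct and follows essentially the same route as the paper's: both reduce the statement to the structure of finitely generated modules over the PID $\Z_p$, one homogeneous component at a time, the paper simply writing $\Ll=\Ll_{free}\oplus\Ll_{tor}$ and observing that tensoring with $\Q_p$ kills the torsion part. Your Nakayama-plus-flatness argument for the converse is just a more explicit version of that computation, and your reading of $\rk_{\Z_p}$ as the minimal number of generators ($\dim_{\F_p}(M/pM)$) is precisely the convention under which the statement is non-trivial, a point the paper leaves implicit.
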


\begin{proof}
Let us write~$\Ll=\Ll_{free}\bigoplus \Ll_{tor}$, where~$\Ll_{free}$ is the free part of~$\Ll$ and~$\Ll_{tors}$ is the torsion part. The module~$\Ll$ is torsion-free if and only if~$\Ll_{tors}$ is trivial. We conclude the proof using the following isomorphism:
$$\Ll \otimes_{\Z_p} \Q_p\simeq \Ll_{free}\otimes_{\Z_p}\Q_p.$$
\end{proof}

Let~$\Ll(\AA, G)$ be the family~$\{\Ll(\AA, {}_1G, \dots, \Ll(\AA, {}_kG)\}$. We recall that~$\Gamma \Ll(\AA, G)$ is the graph product of the family~$\Ll(\AA, G)$ by~$\Gamma$. This is a quotient of~$\Ll(\AA)$, the free graded~$\grad(\AA)$-Lie algebra on~${}_iX_j$ for~$1\leq i \leq k$ and~$1\leq j \leq {}_id$.
\begin{lemm}\label{Gamma epi}
We have a natural surjective morphism of $\AA$-Lie algebras $\alpha \colon \Gamma \Ll(\AA,G)\twoheadrightarrow \Ll(\AA,\Gamma G)$, where $\Ll(\AA,G):=\{\Ll(\AA,{}_1G),\dots , \Ll(\AA,{}_nG)\}$.

Therefore, we have a surjection of graded-Lie algebras:
$$\alpha\otimes_{\AA}\fq \colon \Gamma \Ll(\fq,G)\twoheadrightarrow \Ll(\fq,\Gamma G),$$ where $\Ll(\fq,G):=\{\Ll(\fq,{}_1G),\dots , \Ll(\fq,{}_kG)\}$.
\end{lemm}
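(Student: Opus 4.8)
The claim is that there is a natural surjection $\alpha\colon \Gamma\Ll(\AA,G)\twoheadrightarrow \Ll(\AA,\Gamma G)$, and, tensoring with $\fq$ over $\AA$, a surjection $\alpha\otimes_\AA\fq\colon \Gamma\Ll(\fq,G)\twoheadrightarrow\Ll(\fq,\Gamma G)$. The first step is to produce the map $\alpha$ itself by the universal property of the coproduct and the quotient. For each $i$, the inclusion ${}_iG\hookrightarrow \Gamma G$ (the canonical structure map of the graph product) induces, functorially on graded Lie algebras, a morphism $\Ll(\AA,{}_iG)\to\Ll(\AA,\Gamma G)$: indeed a group homomorphism sends the filtration terms ${}_iG_n(\AA)$ into $(\Gamma G)_n(\AA)$, hence passes to the associated graded. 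By the universal property of the coproduct of graded $\AA$-Lie algebras, these assemble into $\coprod_{i=1}^k \Ll(\AA,{}_iG)\to\Ll(\AA,\Gamma G)$. It then remains to check that this map kills the Lie ideal $\langle [\Ll(\AA,{}_uG),\Ll(\AA,{}_vG)]\mid\{u,v\}\in\bE\rangle$, so that it descends to $\Gamma\Ll(\AA,G)$. This is immediate: if $\{u,v\}\in\bE$ then $[{}_uG,{}_vG]=1$ in $\Gamma G$, so the images of $\Ll(\AA,{}_uG)$ and $\Ll(\AA,{}_vG)$ in $\Ll(\AA,\Gamma G)$ commute (a commutator of elements in commuting subgroups lies in the next filtration term, hence vanishes in the associated graded bracket). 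This gives $\alpha$.

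\medskip

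The second step is surjectivity of $\alpha$. Since $\Gamma G$ is generated (as a pro-$p$ group) by the images of the ${}_iG$, its associated graded Lie algebra $\Ll(\AA,\Gamma G)$ is generated in the graded sense by $\bigcup_i\operatorname{Im}\big(\Ll(\AA,{}_iG)\to\Ll(\AA,\Gamma G)\big)$ — more precisely, the degree-$1$ part is spanned by the images of the generators ${}_ix_j$, and $\Ll(\AA,\Gamma G)$, being a quotient of the free graded Lie algebra $\Ll(\AA)$ on the ${}_iX_j$, is generated by its degree-one part; but all the degree-one generators already lie in the images of the $\Ll(\AA,{}_iG)$, which are contained in the image of $\alpha$. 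Hence $\alpha$ is onto. (One can phrase this cleanly by noting that $\Ll(\AA)\twoheadrightarrow\Ll(\AA,\Gamma G)$ factors through $\Gamma\Ll(\AA,G)$ via $\alpha$.)

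\medskip

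The third step is the base-change statement. Apply the right-exact functor $-\otimes_\AA\fq$. On the one hand $\Ll(\AA,\Gamma G)\otimes_\AA\fq\cong\Ll(\fq,\Gamma G)$: when $\AA=\F_p$ this is trivial since $\fq=\F_p$, and when $\AA=\Z_p$ this is precisely the definition $\Ll(\Q_p,\Gamma G)\coloneq\Ll(\Z_p,\Gamma G)\otimes_{\Z_p}\Q_p$ recorded earlier in the excerpt. On the other hand $\big(\Gamma\Ll(\AA,G)\big)\otimes_\AA\fq\cong\Gamma\Ll(\fq,G)$: tensoring commutes with coproducts and with quotients by ideals (right-exactness), and $\Ll(\AA,{}_iG)\otimes_\AA\fq\cong\Ll(\fq,{}_iG)$ for the same reason as above; care is only needed to see that the relation ideal of $\Gamma\Ll(\AA,G)$ base-changes to that of $\Gamma\Ll(\fq,G)$, which again follows from right-exactness applied to the defining short exact sequence of the graph-product ideal. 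Since $-\otimes_\AA\fq$ preserves surjections, $\alpha\otimes_\AA\fq$ is a surjection $\Gamma\Ll(\fq,G)\twoheadrightarrow\Ll(\fq,\Gamma G)$, as claimed.

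\medskip

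The only genuinely delicate point is the bookkeeping in the third step: checking that forming the graph-product ideal commutes with $-\otimes_\AA\fq$, i.e.\ that $\big(\coprod_i\Ll(\AA,{}_iG)/\langle\cdots\rangle\big)\otimes_\AA\fq$ is canonically $\coprod_i\Ll(\fq,{}_iG)/\langle\cdots\rangle$. This is a routine consequence of right-exactness of base change together with the fact that coproducts of graded Lie algebras and formation of Lie ideals are both compatible with flat base change, but it deserves a sentence or two; everything else is formal manipulation of universal properties. Note this lemma only asserts surjectivity — injectivity of $\alpha\otimes_\AA\fq$ (hence Theorem~\ref{iso finite-case}) is the substantive content proved separately via Leoni's amalgamation theorem.
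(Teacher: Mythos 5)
Your proof is correct and, in substance, the same as the paper's: the paper realizes both $\Gamma\Ll(\AA,G)$ and $\Ll(\AA,\Gamma G)$ as quotients of the free graded Lie algebra $\Ll(\AA)$ on the ${}_iX_j$ and checks that the relation ideal of the former (the lifted ${}_i\Rr$ together with the brackets $[{}_uX_a,{}_vX_b]$ for $\{u,v\}\in\bE$) is contained in that of the latter, which is exactly your observation that $\alpha$ is well defined and that $\Ll(\AA)\twoheadrightarrow\Ll(\AA,\Gamma G)$ factors through $\Gamma\Ll(\AA,G)$. Your phrasing via the universal property of the coproduct, and your extra care about $(\Gamma\Ll(\AA,G))\otimes_\AA\fq\simeq\Gamma\Ll(\fq,G)$ (which the paper leaves implicit here but uses the same colimit argument for in Lemma~\ref{graph product Z}), are only cosmetic differences.
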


\begin{proof}
Consider the presentation
$$1\to \Gamma R \to F \to \Gamma G\to 1,$$
where $F$ is the free pro-$p$ group on~${}_iX_j$ for~$1\leq i \leq k$ and~$1\leq j \leq {}_id$, and $\Gamma R$ is the closed normal subgroup of $F$ generated by ${}_iR$ with~$1\leq i \leq k$ and~$[{}_uX_a, {}_vX_b]$ with~$\{u,v\}\in \bE$ and~$1\leq a \leq {}_ud$ and~$1\leq b \leq {}_vd$. 
We define $\Rr(\Gamma)$ the kernel of the following epimorphism:
$$\widetilde{\alpha}\colon \Ll(\AA)\twoheadrightarrow \Ll(\AA, \Gamma G).$$
Observe that~$\Rr(\Gamma)$ is isomorphic to~$\bigoplus_n \Gamma R \cap F_n(\AA)/ \Gamma R \cap F_{n+1}(\AA)$.
Let us write~$\Ll(\AA,{}_iG)\coloneq {}_i\Ll/{}_i\Rr$, where ${}_i\Ll$ is a free $\AA$-Lie algebra on ${}_iX_1,\dots , {}_iX_{{}_id}$, and ${}_i\Rr\simeq \bigoplus_n \left( {}_iR \cap {}_iF_n(\AA)\right)/\left({}_iR \cap {}_iF_{n+1}(\AA)\right)$ (the isomorphism is given by Magnus). Since for every positive integer $n$, ${}_iF_n(\AA)\subset F_n(\AA)$, then ${}_i\Rr$ embeds into a subset of $\Ll(\AA)$. In particular, since ${}_iR\subset \Gamma R$, we infer that~${}_i\Rr\subset \Rr(\Gamma)$.

Let $\Gamma \Rr$ be the $\AA$-Lie ideal in $\Ll(\AA)$ generated by ${}_i\Rr$ with $1\leq i \leq k$ and the family~$\gamma \coloneq \{ [{}_uX_a,{}_vX_b]\}$ with~$\{u,v\}\in \bE$ and~$1\leq a\leq {}_ud$ and $1\leq b \leq {}_vd$. This is the kernel of the canonical morphism~$\Ll(\AA) \twoheadrightarrow \Gamma \Ll(\AA, G)$. To conclude, we easily observe that $\gamma\subset \Rr(\Gamma)$. Thus $\Gamma \Rr\subset \Rr(\Gamma)$, and so $\widetilde{\alpha}$ induces a surjection
$$\alpha\colon \Gamma \Ll(\AA,G)\to \Ll(\AA,\Gamma G).$$
\end{proof}



\begin{proof}[Proof of Theorem~\ref{main theo Zp}]
Since~$G$ satisfies the condition~$(\ast_{\Z_p})$, then, from Proposition~\ref{ast and graph}, the Lie algebra~$\Gamma \Ll(\Z_p, G)$ is torsion-free. From Lemmata~\ref{torsion and Q} and~\ref{Gamma epi}, we obtain the following inequalities:
\begin{multline*}
a_n(\Gamma \Ll(\Q_p, G))=a_n(\Gamma \Ll(\Z_p, G))\geq a_n(\Ll(\Z_p, \Gamma G))\geq a_n(\Ll(\Q_p, \Gamma G))=a_n(\Gamma \Ll(\Q_p, G)),
\end{multline*}
the last equality is given by~Theorem~\ref{iso finite-case}.

This implies that~$a_n(\Ll(\Z_p, \Gamma G))=a_n(\Ll(\Q_p, \Gamma G))$. Thus~$\Ll(\Z_p, \Gamma G)$ is torsion-free, and the morphism~$\alpha\otimes_{\Z_p}\Q_p\colon \Gamma \Ll(\Q_p, G)\to \Ll(\Q_p, \Gamma G)$, given in Lemma~\ref{Gamma epi}, is an isomorphism. Consequently, the surjective morphism~$\alpha\colon \Gamma \Ll(\Z_p,G)\twoheadrightarrow \Ll(\Z_p, \Gamma G)$ is also an isomorphism.
\end{proof}

\begin{coro}\label{iso enveloppe}
Assume that the family~$G$ satisfies~$(\ast_\AA)$.Then~$\E(\AA, \Gamma G)$ is torsion-free. Furthermore, if we define~$\E(\AA, G)\coloneq \{\E(\AA, {}_1G), \dots, \E(\AA, {}_k G)\}$, we infer
$$\E(\AA, \Gamma G)\simeq \Gamma \E(\AA, G).$$
\end{coro}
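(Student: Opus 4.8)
The plan is to deduce Corollary~\ref{iso enveloppe} from Theorem~\ref{main result} (equivalently, Theorem~\ref{main theo Zp} together with the $\fq = \F_p$ case of Theorem~\ref{iso finite-case}) by applying the universal enveloping algebra functor. First I would observe that Lemma~\ref{PBW application bis} already establishes the key computation: since $G$ satisfies $(\ast_\AA)$, the universal envelope of $\Gamma \Ll(\AA, G)$ is isomorphic to $\Gamma \E(\AA, G)$. So the proof reduces to identifying $\U(\Ll(\AA, \Gamma G))$ with $\E(\AA, \Gamma G)$ and invoking the isomorphism $\Ll(\AA, \Gamma G) \simeq \Gamma \Ll(\AA, G)$ from Theorem~\ref{main result}.

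The main point is therefore the statement $\U(\Ll(\AA, \Gamma G)) \simeq \E(\AA, \Gamma G)$. By Theorem~\ref{main result}$(i)$, the Lie algebra $\Ll(\AA, \Gamma G)$ is torsion-free over $\AA$; when $\AA = \Z_p$ this is the content of Theorem~\ref{main theo Zp}, and when $\AA = \F_p$ it is automatic. Then the comparison theorem $\U(\Ll(\AA, Q)) \simeq \E(\AA, Q)$ for torsion-free $\Ll(\AA, Q)$, recorded in the prerequisites (following~\cite[Theorem~$1.3$]{hartlfox} and~\cite[Corollary~$1.4$]{HAMZA2023172}, with Lemma~\ref{PBW torsion freeness} in the $\Z_p$ case), applies with $Q \coloneq \Gamma G$, which is a finitely generated pro-$p$ group. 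This gives $\E(\AA, \Gamma G) \simeq \U(\Ll(\AA, \Gamma G))$, and the PBW theorem guarantees this is torsion-free over $\AA$.

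Finally I would chain the isomorphisms: by functoriality of $\U$ applied to Theorem~\ref{main result}$(ii)$, $\U(\Ll(\AA, \Gamma G)) \simeq \U(\Gamma \Ll(\AA, G))$, and by Lemma~\ref{PBW application bis} the right side is $\Gamma \E(\AA, G)$. Combining with the previous paragraph yields $\E(\AA, \Gamma G) \simeq \Gamma \E(\AA, G)$, and torsion-freeness has already been noted. The only mild subtlety, which is not really an obstacle since it is handled by the cited results, is ensuring the enveloping-algebra comparison is valid in the profinite/filtered setting rather than just the discrete graded one; the remark in Lemma~\ref{PBW torsion freeness} that Hartl--Fox's proof adapts to the profinite case, together with the identification of $\E(\AA, Q)$ with $\grad$ of the completed group algebra, covers this.
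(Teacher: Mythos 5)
Your proposal is correct and follows essentially the same route as the paper: torsion-freeness of $\Ll(\AA,\Gamma G)$ from Theorem~\ref{main theo Zp}, the identification $\U(\Ll(\AA,\Gamma G))\simeq \E(\AA,\Gamma G)$ via Lemma~\ref{PBW torsion freeness}, and $\U(\Gamma\Ll(\AA,G))\simeq \Gamma\E(\AA,G)$ via Lemma~\ref{PBW application bis}, chained through Theorem~\ref{main result}$(ii)$. Your explicit remark that torsion-freeness of $\E(\AA,\Gamma G)$ comes from PBW is exactly what the paper records in its prerequisites.
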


\begin{proof}
From Theorem~\ref{main theo Zp}, the $\AA$-module~$\Ll(\AA, \Gamma G)$ is torsion-free. Thus, we infer the following argument. On the one hand, from Lemma~\ref{PBW torsion freeness}, we have~$\U(\Ll(\AA, \Gamma G))\simeq \E(\AA, \Gamma G)$. On the other hand, from Lemma~\ref{PBW application bis}, we have~$\U(\Gamma \Ll(\AA, G))\simeq \Gamma \E(\AA, G)$. Therefore, we have:
$$\E(\AA, \Gamma G)\simeq \Gamma \E(\AA, G).$$
\end{proof}

\section{Computation of gocha series and Koszulity}
In this section we only consider finitely presented pro-$p$ groups. Let $G$ be a family of pro-$p$ groups. Except Corollary~\ref{app Koszul}, we assume that $\AA=\F_p$ and that for every $1\leq i \leq k$, the graded algebra $\E({}_iG)$ is Koszul. We are mostly inspired by \cite[Subsection~$4.1$]{bartholdi2020right}.
\subsection{Graph product of Koszul algebras}
We show the following result:
\begin{theo}\label{Koszul graph}
   Assume that for every $1\leq i \leq k$, the graded algebra $\E({}_iG)$ is Koszul, then the graded algebra $\E(\Gamma G)$ is also Koszul.
\end{theo}

\begin{proof}
We proceed by induction on~$k$, the number of vertices of~$\Gamma$.

If~$k=1$, there is nothing to show.

If~$k>1$, we distinguish the two following cases.

$(i)$ If~$\Gamma$ is a complete graph, we deduce that~$\E(\Gamma G)\simeq \prod_{i=1}^k \E({}_iG)$. Since for every~$1\leq i \leq k$ the algebra~$\E({}_iG)$ is Koszul, we infer by~\cite[Chapitre~$3$, Corollary~$1.2$]{polishchuk2005quadratic} that~$\E(\Gamma G)$ is Koszul.

$(ii)$ Let us assume that~$\Gamma$ is not a complete graph.
Then following the notations and arguments from the proof of Theorem~\ref{iso finite-case}~$(ii)$, 
we have the following isomorphisms:
$$\Gamma G \simeq ({}^a\Gamma{}^aG) \coprod_{({}^c\Gamma {}^cG)} ({}^b\Gamma {}^bG), \quad \text{and} \quad \Gamma \E(G) \simeq {}^a\Gamma\E({}^aG) \coprod_{{}^c\Gamma \E({}^cG)} {}^b\Gamma \E({}^bG).$$
By induction hypothesis and Theorem~\ref{iso finite-case}, the algebras~${}^a\Gamma\E({}^aG)$,  ${}^c\Gamma \E({}^cG)$ and ${}^b\Gamma \E({}^bG)$ are Koszul. Furthemore, from Lemma~\ref{injection of graphs}, the algebra~${}^c\Gamma \E({}^cG)$ is a subalgebra of~${}^a\Gamma \E({}^aG)$ and~${}^b\Gamma \E({}^bG)$. Using~\cite[Lemma~$3.13$]{blumer2024quadratically} from Blumer, we conclude that the algebra~$\Gamma \E(G)$ is Koszul. Thus, using Theorem~\ref{iso finite-case}, we conclude that~$\E(\Gamma G)$ is also Koszul.
\end{proof}

\begin{rema}\label{resol kosz}
Alternatively, if we assume that~$\E({}_iG)$ is Koszul for~$1\leq i \leq k$, we can construct a linear resolution of free graded $\Gamma \E(G)$-free modules.

For~$1\leq i \leq k$, we consider a linear resolution of free graded $\E({}_iG)$-modules:
$$\cdots \to^{{}_id_3} ({}_i\PP_2)\F_p \otimes_{\F_p}\E({}_iG) \to^{{}_id_2} ({}_i\PP_1)\F_p \otimes_{\F_p}\E({}_iG) \to^{{}_id_1} \E(_iG) \to^{{}_i\epsilon} \F_p \to 0,$$
where ${}_i\epsilon$ is the augmentation map and, for every integer $j\geq 0$, the set ${}_i\PP_j$ is a basis of the~$\E({}_iG)$-module~$({}_i\PP_j)\F_p \otimes_{\F_p}\E({}_iG)$.

Let us recall that an~$m$-clique~$(i_1,\dots, i_m)$ in~$\Gamma_m$ is ordered by~$i_1<i_2<\dots <i_m$. We define a complex of free $\Gamma\E(G)$-modules, using the following basis:
$$\PP_n\coloneq \bigcup_{m\in \NN} \left\{ \bigcup_{ (i_1,\dots,i_m)\in \Gamma_m}\bigcup_{n_1+\dots+n_m=n}  ( {}_{i_1}v, \dots, {}_{i_m}v)|\quad {}_{i_j}v\in {}_{i_j}\PP_{n_j}\right\}, $$
where the $n_i$'s can also take value zero, and the derivatives:
\begin{multline*}
    d_n \colon \PP_n\F_p \otimes_{\F_p} \Gamma \E(G)\to \PP_{n-1}\F_p \otimes_{\F_p}\Gamma\E(G); 
    \\d_n(({}_{i_1}v,\dots ,{}_{i_m}v) w)\coloneq \sum_{j=1}^m(-1)^{N_j}\left( {}_{i_1}v,{}_{i_2}v,\dots ,{}_{i_j}d_{n_j}({}_{i_j}v),\dots ,{}_{i_m}v\right)w, 
    \\ \text{with } N_1\coloneq 0, \text{ and } N_j\coloneq n_1+\dots +n_{j-1}.
\end{multline*}
We make here a little abuse of notations:~$({}_{i_1}v,{}_{i_2}v,\dots ,{}_{i_j}d_{n_j}({}_{i_j}v),\dots ,{}_{i_m}v)$ does not denote an element in~$\PP_{n-1}$, but an element in~$\PP_{n-1}\F_p \otimes \Gamma\E(G)$ (precisely in~$\PP_{n-1}\F_p \otimes\E({}_{i_j}G)$). We also note that we have splittings (graded $\F_p$-linear maps)~${}_is_j\colon ({}_i\PP_j)\F_p \otimes_{\F_p} \E({}_iG) \to ({}_i\PP_{j+1})\F_p \otimes_{\F_p}\E({}_iG)$ and~${}_is_{-1}\colon \F_p\hookrightarrow \E({}_iG)$, such that:
$$({}_id_{j+1}) \circ ({}_is_j)+({}_is_{j-1})\circ ({}_id_j)={\rm id_{({}_i\PP_{j})\F_p \otimes \E({}_iG)}}-{}_i\epsilon_j,$$
where ${}_i\epsilon_0\coloneq {}_i\epsilon$ and ${}_i\epsilon_j\coloneq 0$ for $j\geq 1$.

To show that the complex $(\PP_n\F_p\otimes_{\F_p}\Gamma \E(G), d_n)_{n\in \NN}$ is acyclic, we introduce~$s_{-1}\colon \F_p \hookrightarrow \Gamma \E(G)$ and a family of sections~$\{s_j\colon \PP_j\F_p \otimes \Gamma \E(G)\to \PP_{j+1}\F_p\otimes \Gamma \E(G)\}_{j\geq 0}$. For this purpose, we use the Normal Form Theorem from Green~\cite[Theorem~$3.9$]{ruthgraph}. This result was originally provided for groups, but it can be applied for graded algebras. A~$\F_p$-basis of~$\PP_n\F_p \otimes_{\F_p} \Gamma \E(G)$ is given by elements $v_c w$, where: 

$\bullet$ $v_c$ is in $\PP_n$ so $v_c\coloneq ( {}_{i_1}v,\dots ,{}_{i_m}v)$ with $c\coloneq (i_1,\dots,i_m ) \in \Gamma_m$, $n_1+\dots+n_m=n$,

$\bullet$ $w\coloneq ({}_{i_1}v_{j_1})\dots ({}_{i_k}v_{j_k})$,  is a word with letters~${}_iv_j\in \E({}_iG)$ such that for every $l$: 
\\$(i)$ $i_l\neq i_{l+1}$, and $(ii)$ if $i_l>i_{l+1}$, then $\{i_{l}, i_{l+1}\}\notin \bE$.

We first assume that the first letter of $w$ is not in every $_{i_0}V$ such that $( i_0,i_1,\dots, i_m )$ is a $(m+1)$-clique of~$\Gamma$, and~$i_0< i_1 <\dots < i_m$. Therefore, we define:
$$(a)\quad s_n(v_cw)\coloneq \left(  {}_{i_1}s_{n_1}({}_{i_1}v), {}_{i_2}v,\dots ,{}_{i_j}v,\dots ,{}_{i_m}v \right)w, \quad \text{ with } i_1<i_2<\dots <i_m.$$
If we are not in the previous case, then there exists $i_0$ such that $( i_0,\dots,i_m )$ is in $\Gamma_{m+1}$, $i_0<i_1< \dots < i_m$, and the first letter~$w_0$ of $w\coloneq w_0w'$ is in~${}_{i_0}V$. Then we introduce elements ${}_{i_0}v_j$ in ${}_{i_0}\PP_1$ and $w_j$ in $\E({}_{i_0}G)$ such that~${}_{i_0}s_0(w_0)\coloneq \sum_j ({}_{i_0}v_j) w_j$.
Thus we define
$$(b)\quad s_n(v_cw):=\sum_j \left( {}_{i_0}v_j, {}_{i_1}v,\dots, {}_{i_m}v\right) w_jw'.$$
We leave it to the reader to verfify that the preceding maps are indeed splittings.
\end{rema}

\subsection{Cohomology of graph products of Koszul algebras}
Assume that for every~$1\leq i \leq k$, the algebra $\E({}_iG)$ is Koszul. We denote the cohomology of~${}_iG$ by:
$$H^\bullet({}_iG)\simeq \A^\bullet({}_iG)\coloneq {}_i\E/\I^!({}_iG),$$
where $\I^!({}_iG)$ is a two sided ideal of ${}_i\E$ generated by a (known) family that we call~$S^!({}_iG)$ in~${}_i\E_2\subset \E_2$. For background on Koszulity, consult~\cite[Chapters $3$ and $4$]{Loday}.
Let us compute the cohomology of $\Gamma(G)$. We recall that~$\E$ is the set of noncommutative polynomials on~$\{{}_iX_u\}$ with~$1\leq i \leq k$ and~$1\leq u \leq {}_id$ over~$\F_p$, graded by~$\deg({}_iX_u)=1$.
\begin{coro}\label{coho graph}
    We have the following isomorphism of graded algebras:
$$H^\bullet(\Gamma G)\simeq \A^\bullet(\Gamma G)\coloneq\E/\I^!(\Gamma G),$$
where $\I^!(\Gamma G)$ is the two-sided ideal generated by the family:
\begin{multline*}
S^!(\Gamma G):=\bigcup_{i=1}^k S^!({}_iG) \cup S_\Gamma^{(1)}\cup S_\Gamma^{(2)}, \quad \text{where }
\\ S_\Gamma^{(1)}\coloneq \{({}_uX_a)({}_vX_b)+({}_vX_b)({}_uX_a) \text{ for } 1\leq u < v \leq k \text{ and } 1\leq a\leq {}_ud, 1\leq b\leq {}_vd \}
\\S_{\Gamma}^{(2)}\coloneq \{({}_uX_a)({}_vX_b), \text{ for } u\neq v, \{ u,v\} \notin \mathbf{E} \text{ and } 1\leq a\leq {}_ud, 1\leq b\leq {}_vd \}\subset \E_2.
\end{multline*}
\end{coro}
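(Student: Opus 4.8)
The plan is to deduce Corollary~\ref{coho graph} from Theorem~\ref{Koszul graph} together with Koszul duality, as formulated in~\cite[Proposition~$1$]{Hamza25}. Since Theorem~\ref{Koszul graph} establishes that $\E(\Gamma G)$ is Koszul, its cohomology algebra $H^\bullet(\Gamma G)$ is canonically isomorphic to the quadratic dual $\E(\Gamma G)^!$. So the task reduces to a purely algebraic computation: identify the quadratic dual of a graph product of quadratic (indeed Koszul) algebras. The key point is that $\E(\Gamma G)$ is itself quadratic, being presented by the degree-$1$ generators $\{{}_iX_u\}$ and the degree-$2$ relations consisting of (a) the images of the quadratic relations ${}_iS \subset {}_i\E_2$ presenting each $\E({}_iG)$, together with (b) the commutation relations $({}_uX_a)({}_vX_b)-({}_vX_b)({}_uX_a)$ for $\{u,v\}\in\bE$, and (c) the annihilation relations $({}_uX_a)({}_vX_b)=({}_vX_b)({}_uX_a)=0$ for $u\neq v$, $\{u,v\}\notin\bE$. (Here (b) and (c) come from the defining relations $[{}_uX_a,{}_vX_b]$ of the graph product for edges, and the fact that distinct generators in a graph product with no edge between them generate a free product — which, at the level of the graded algebra $\E$, gives the vanishing of mixed products of degree $\geq 2$... wait, more carefully, it gives that $\E(\Gamma G)$ decomposes appropriately, and the mixed degree-$2$ monomials must be analyzed.) Once this quadratic presentation is in hand, the dual is computed relation-space by relation-space inside the degree-$2$ part $\E_2 \cong \bigoplus_{i,j}{}_i\E_1\otimes{}_j\E_1$.

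\textbf{Steps, in order.} First I would record the quadratic presentation of $\E(\Gamma G)$: from Corollary~\ref{iso enveloppe} (or directly from Theorem~\ref{main result}) we have $\E(\Gamma G)\simeq \Gamma\E(G)$, and $\Gamma\E(G)$ is by construction the quotient of $\coprod_i \E({}_iG)$ — itself quadratic with relation space $\bigoplus_i {}_iS$ inside $\bigoplus_i {}_i\E_2 \subset \E_2$ — by the ideal generated by the commutators $[{}_uX_a,{}_vX_b]$ with $\{u,v\}\in\bE$. Thus $\E(\Gamma G)$ is quadratic with relation space
\[
S(\Gamma G) = \Big(\bigoplus_{i=1}^k {}_iS\Big) \;\oplus\; \langle {}_uX_a\,{}_vX_b - {}_vX_b\,{}_uX_a : \{u,v\}\in\bE\rangle \;\oplus\; \langle {}_uX_a\,{}_vX_b,\ {}_vX_b\,{}_uX_a : \{u,v\}\notin\bE,\ u\neq v\rangle,
\]
the last summand appearing because in the free product (no edge) the graded algebra has $\E_2$ equal only to the sum of the $\E_2$ of the factors, so all genuinely-mixed degree-$2$ products vanish. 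Second, I would compute the annihilator $S(\Gamma G)^\perp$ inside $(\E_2)^* \cong \bigoplus_{i,j} ({}_i\E_1)^*\otimes({}_j\E_1)^*$, using that the three summands above live in orthogonal pieces of the grading by the pair of color-indices: on the diagonal pieces $({}_i\E_1)^{\otimes 2}$ the annihilator of ${}_iS$ is by definition the relation space $S^!({}_iG)$ of the dual $\E({}_iG)^! = H^\bullet({}_iG)$; on an edge pair $(u,v)$ the annihilator of the one-dimensional-per-$(a,b)$ antisymmetric relation is the symmetric part, giving $({}_uX_a)({}_vX_b)+({}_vX_b)({}_uX_a)$, i.e.\ the set $S_\Gamma^{(1)}$; on a non-edge pair the full space $({}_uX_a)({}_vX_b)$ and $({}_vX_b)({}_uX_a)$ is already killed, so its annihilator contributes these monomials themselves, which (after consolidating with the antisymmetric generator already listed) is exactly $S_\Gamma^{(2)}$. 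Third, I would invoke Koszul duality for Koszul algebras — $H^\bullet(\Gamma G) \cong \E(\Gamma G)^!$ via~\cite[Proposition~$1$]{Hamza25} — to conclude $H^\bullet(\Gamma G)\simeq \E/\I^!(\Gamma G)$ with $\I^!(\Gamma G)$ generated by $S^!(\Gamma G)=\bigcup_i S^!({}_iG)\cup S_\Gamma^{(1)}\cup S_\Gamma^{(2)}$.

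\textbf{Main obstacle.} The delicate step is the second one, and in particular getting the non-edge case right: one must be careful that in the free product $\E({}_uG)\coprod\E({}_vG)$ (which is what the graph product yields on a non-adjacent pair), the degree-two component contains no cross terms, so that the relation space of $\E(\Gamma G)$ in the $(u,v)$-block of $\E_2$ is the \emph{full} two-dimensional space spanned by $({}_uX_a)({}_vX_b)$ and $({}_vX_b)({}_uX_a)$ for each $(a,b)$; then its annihilator in the dual is \emph{zero} in that block, which means the dual algebra has \emph{no} relations forcing these monomials to vanish — equivalently, in $H^\bullet(\Gamma G)$ the products $({}_uX_a)({}_vX_b)$ survive. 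This is consistent with the final formula $h^n(\Gamma G)=\sum_m\sum_{\Gamma_m}\sum h^{n_1}({}_{i_1}G)\cdots$ once one matches the listed $S_\Gamma^{(2)}$ as generators \emph{of the ideal $\I^!$}, i.e.\ relations of $H^\bullet$; reconciling the sign/orientation bookkeeping in $S_\Gamma^{(1)}$ versus $S_\Gamma^{(2)}$ with the Koszul-dual sign conventions (anticommutator vs.\ vanishing product) is the one place where care is genuinely required, but it is a finite linear-algebra check in $\E_2$. Everything else follows formally from Theorem~\ref{Koszul graph}, Corollary~\ref{iso enveloppe}, and the quadratic-duality dictionary.
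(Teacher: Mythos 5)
Your overall strategy --- exhibit a quadratic presentation of $\E(\Gamma G)$, compute its quadratic dual block by block inside $\E_2$, and invoke Theorem~\ref{Koszul graph} together with \cite[Proposition~$1$]{Hamza25} --- is the same as the paper's, except that the paper sidesteps the block-by-block annihilator computation: it only checks the inclusion $S^!(\Gamma G)\subseteq \I^!(\Gamma G)$ and then counts dimensions, using $\dim_{\F_p}\E_2=d^2=|S(\Gamma G)|+|S^!(\Gamma G)|$ to conclude that $S^!(\Gamma G)$ generates all of $\I^!(\Gamma G)$.

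However, your treatment of the non-edge blocks contains a genuine error. For $u\neq v$ with $\{u,v\}\notin\bE$, the graph product imposes \emph{no} relation between $\E({}_uG)$ and $\E({}_vG)$: the coproduct of $T(V_u)/I_u$ and $T(V_v)/I_v$ is $T(V_u\oplus V_v)/(I_u+I_v)$, whose degree-two component contains the full cross terms $V_u\otimes V_v$ and $V_v\otimes V_u$. So the mixed products $({}_uX_a)({}_vX_b)$ do \emph{not} vanish in $\E(\Gamma G)$, and the relation space $S(\Gamma G)$ has \emph{zero} component in the non-edge blocks --- this is exactly what the paper's count $|S(\Gamma G)|={}_1r+\dots+{}_kr+\sum_{\{u,v\}\in\bE}{}_ud\,{}_vd$ records (no contribution from non-edges). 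Consequently the annihilator in such a block is the \emph{entire} block, which is precisely why $S_\Gamma^{(2)}$ appears among the defining relations of the dual: in $H^\bullet(\Gamma G)$ the products $({}_uX_a)({}_vX_b)$ for non-adjacent $u,v$ are \emph{zero}, consistent with the fact that only cliques contribute to $h^n(\Gamma G)$. Your write-up asserts the opposite (relation space equal to the full block, annihilator zero, products surviving in cohomology) and is moreover internally inconsistent, since in the same step you claim the annihilator ``contributes these monomials themselves.'' With your relation space as written, the dual you compute would fail to contain $S_\Gamma^{(2)}$ and would contradict the clique formula; once the non-edge block is corrected as above, the rest of your argument goes through.
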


\begin{proof}
The algebra~$\E(\Gamma G)$ is Koszul. We compute~$\A^\bullet(\Gamma G)$ the Koszul dual of $\E(\Gamma G)$. Further details on the Koszul duality can be found in~\cite[Chapter $3$, $\S 3.2.2$]{Loday}. For this purpose, we compute~$\I^!(\Gamma G)$, the kernel of the natural surjection:~$\E\to \A^\bullet(\Gamma G)$. We observe that $S^!(\Gamma G)\subset \I^!(\Gamma G)$.
Let us denote by~$S({}_iG)$ (resp.\ $S(\Gamma G)$) a minimal family in~${}_i\E_2$ (resp.\ $\E_2$) generating~$\I({}_iG)$ (resp.\ $\I(\Gamma G)$).
We have
\begin{multline*}
|S(\Gamma G)|={}_1r+\dots +{}_kr+\sum_{1\leq u<v \leq k;\{u,v\} \in \mathbf{E}}{}_ud{}_vd, \quad \text{and} 
\\ |S^!(\Gamma G)|=\left( ({}_1d)^2-{}_1r\right)+\dots +\left( ({}_kd)^2-{}_kr\right)+ \sum_{1\leq v <u \leq k; \{u,v\} \in \mathbf{E}}{}_ud{}_vd+\sum_{1\leq u \neq v \leq k; \{u,v\}\notin \mathbf{E}}{}_ud{}_vd.
\end{multline*}

We finally note that
$$\dim_{\F_p} \E_2=d^2=({}_1d+\dots +{}_kd)^2=|S(\Gamma G)| + |S^!(\Gamma G)|.$$
Thus $S^!(\Gamma G)$ generates the ideal $\I^!(\Gamma G)$. Since~$\E(\Gamma G)$ is Koszul, we conclude using \cite[Proposition~$1$]{Hamza25} (see also~\cite{leoni2024zassenhaus}) that:~$H^\bullet(\Gamma G)\simeq \A^\bullet(\Gamma G).$
\end{proof}

Let us now precisely compute the gocha and Poincaré series of $\Gamma G$.

\begin{prop}\label{computation gocha series}
Assume that for every $1\leq i \leq k$, the algebra $\E({}_iG)$ is Koszul, then we have for every positive integer~$n$:
$$h^n(\Gamma G)=\sum_{m\geq 1}\sum_{ (i_1,\dots, i_m) \in \Gamma_m}\sum_{n_1+\dots+n_m=n} h^{n_1}({}_{i_1}G)\dots h^{n_m}({}_{i_m}G).$$
Thus, we infer
$$gocha(\F_p, \Gamma G,t)=\frac{1}{H^\bullet(\Gamma G,-t)} \quad \text{where} \quad H^\bullet(\Gamma G, t)\coloneq 1+\sum_{n\in \NN} h^n(\Gamma G)t^n.$$
\end{prop}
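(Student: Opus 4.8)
The plan is to deduce the Poincaré series identity from the cohomology presentation in Corollary~\ref{coho graph} together with Koszulity, and then extract the dimension formula by a combinatorial count of monomials in the Koszul dual algebra. First I would recall that, since~$\E(\Gamma G)$ is Koszul (Theorem~\ref{Koszul graph}), the Hilbert series of~$\E(\Gamma G)$ and of its quadratic dual~$H^\bullet(\Gamma G)\simeq \A^\bullet(\Gamma G)$ are related by the classical Koszul identity
$$gocha(\F_p,\Gamma G,t)\cdot H^\bullet(\Gamma G,-t)=1,$$
which is exactly the second displayed equation. So the whole content reduces to computing~$h^n(\Gamma G)=\dim_{\F_p}\A^n(\Gamma G)$.

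To compute~$\dim_{\F_p}\A^n(\Gamma G)$, I would use the explicit presentation $\A^\bullet(\Gamma G)=\E/\I^!(\Gamma G)$ from Corollary~\ref{coho graph}, where~$\I^!(\Gamma G)$ is generated by the relations $S^!({}_iG)$ inside each block, by the anticommutation relations $S_\Gamma^{(1)}$ between variables of adjacent vertices, and by the vanishing relations $S_\Gamma^{(2)}$ killing all products of variables from non-adjacent distinct vertices. The key observation is that these relations make~$\A^\bullet(\Gamma G)$ behave as a ``graph product'' of the algebras~$\A^\bullet({}_iG)$ in the anticommutative (super) sense: a monomial basis of~$\A^\bullet(\Gamma G)$ is obtained by taking, for each clique $(i_1,\dots,i_m)\in\Gamma_m$ (ordered $i_1<\dots<i_m$), the tensor product of monomial bases of the~$\A^{\bullet}({}_{i_j}G)$'s — any monomial involving two variables from the same vertex is reduced inside that block, any monomial involving two non-adjacent distinct vertices vanishes, and the anticommutation relations allow one to sort the remaining factors into the fixed vertex order. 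This is precisely a Normal Form / Poincaré–Birkhoff–Witt type statement for graph products of quadratic algebras; I would invoke Green's Normal Form Theorem \cite[Theorem~$3.9$]{ruthgraph} (as already used in Remark~\ref{resol kosz}, applied to graded algebras) to justify that these sorted tensor monomials form a basis. Counting them by first choosing the clique, then distributing the total degree~$n$ as $n_1+\dots+n_m$, and then choosing a basis monomial of degree~$n_j$ in~$\A^{n_j}({}_{i_j}G)$ (of which there are~$h^{n_j}({}_{i_j}G)$, with the convention $h^0({}_iG)=1$), yields
$$h^n(\Gamma G)=\sum_{m\geq 1}\sum_{(i_1,\dots,i_m)\in\Gamma_m}\sum_{n_1+\dots+n_m=n}h^{n_1}({}_{i_1}G)\cdots h^{n_m}({}_{i_m}G),$$
where terms with some $n_j=0$ contribute (they correspond to cliques whose ``full'' support is a smaller clique). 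Equivalently, one can package this as the identity $H^\bullet(\Gamma G,t)=\sum_{(i_1,\dots,i_m)\in\Gamma_m}\prod_{j=1}^m\big(H^\bullet({}_{i_j}G,t)-1\big)$ over all cliques including the empty one, which is the generating-function form of the same count.

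An alternative, cleaner route I would also mention: proceed by induction on~$k$, mirroring the proof of Theorem~\ref{iso finite-case}. If~$\Gamma$ is complete, then $\E(\Gamma G)\simeq\prod_i\E({}_iG)$ so $H^\bullet(\Gamma G,t)=\prod_i H^\bullet({}_iG,t)$, and the clique formula reduces to expanding this product. If~$\Gamma$ is not complete, pick non-adjacent vertices~$a,b$ and use $\Gamma G\simeq({}^a\Gamma{}^aG)\amalg_{{}^c\Gamma{}^cG}({}^b\Gamma{}^bG)$ together with the corresponding decomposition of~$\E(\Gamma G)$ as an amalgamated product of Koszul algebras over a common Koszul subalgebra (Lemma~\ref{injection of graphs} and Theorem~\ref{Koszul graph}); the Hilbert series of such an amalgamated product satisfies $1/gocha(\Gamma G,t)=1/gocha({}^a\Gamma{}^aG,t)+1/gocha({}^b\Gamma{}^bG,t)-1/gocha({}^c\Gamma{}^cG,t)$, hence $H^\bullet(\Gamma G,-t)=H^\bullet({}^a\Gamma{}^aG,-t)+H^\bullet({}^b\Gamma{}^bG,-t)-H^\bullet({}^c\Gamma{}^cG,-t)$ after inverting, and the inductive hypothesis applied to the three subgraphs gives the clique sum for~$\Gamma$, since the cliques of~$\Gamma$ are exactly those of~${}^a\Gamma$ plus those of~${}^b\Gamma$ not contained in~${}^a\Gamma$, with those common to both being precisely the cliques of~${}^c\Gamma$. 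The main obstacle in either approach is the bookkeeping: in the direct approach, carefully justifying that the sorted tensor monomials genuinely form a basis (no hidden relations, no missing monomials) via the Normal Form Theorem; in the inductive approach, verifying the additive Hilbert-series relation for amalgamated products of Koszul algebras and checking that the clique decomposition of~$\Gamma$ into~${}^a\Gamma,{}^b\Gamma,{}^c\Gamma$ matches the inclusion–exclusion on generating functions. I would favor presenting the induction, as it reuses machinery already set up in Section~2 and keeps the combinatorics transparent.
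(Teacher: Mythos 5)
Your favored route --- induction on the number of vertices, treating the complete-graph case by a product argument and the non-complete case via the amalgamated-product decomposition $\Gamma G\simeq({}^a\Gamma{}^aG)\coprod_{{}^c\Gamma{}^cG}({}^b\Gamma{}^bG)$, the additive inclusion--exclusion identity for the Poincar\'e series (the paper invokes Lemaire's Lemme~$5.1.10$ for precisely the relation you state, in its Koszul-dual form $H^\bullet(\Gamma G,t)=H^\bullet({}^a\Gamma{}^aG,t)+H^\bullet({}^b\Gamma{}^bG,t)-H^\bullet({}^c\Gamma{}^cG,t)$), and the matching inclusion--exclusion on cliques --- is essentially the paper's own proof. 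The only caveat concerns your alternative monomial-counting sketch: the inner sum must run over $n_j\geq 1$ (allowing $n_j=0$ while also summing over all cliques would double-count monomials supported on proper subcliques), but since you present and favor the induction, the proposal is correct.
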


\begin{proof}
We define the series:
$$P_{\Gamma G}(t)\coloneq 1+\sum_{m\geq 1}\sum_{ (i_1,\dots, i_m) \in \Gamma_m} (H^{\bullet}({}_{i_1}G,t)-1)\dots (H^{\bullet}({}_{i_m}G,t)-1)\coloneq 1+\sum_{n\geq 1}p_{\Gamma G,n}t^n.$$
Note that for every integer~$n\geq 1$, we have:
$$p_{\Gamma G,n}\coloneq \sum_{m\geq 1}\sum_{ (i_1,\dots, i_m) \in \Gamma_m} \sum_{n_1+\dots+n_m=n} h^{n_1}({}_{i_1}G)\dots h^{n_m}({}_{i_m}G).$$
We also introduce~$S_{\Gamma G}(t)\coloneq \frac{1}{P_{\Gamma G}(-t)}$. By induction on the number of vertices~$k$ of~$\Gamma$, we show that:
$$P_{\Gamma G}(t)=H^\bullet(\Gamma G, t), \quad \text{and}\quad gocha(\Gamma G,t)=S_{\Gamma G}(t).$$

If~$k=1$, the equalities are true since~$\E(G)$ is Koszul and from~\cite[Proposition~$1$]{Hamza25}.

Now, we assume that~$k\geq 1$. We again distinguish two cases.

$\bullet$ If~$\Gamma$ is a complete graph, then
$$\Gamma G\simeq \prod_{i=1}^k{}_iG \quad \text{and}\quad \E(\Gamma G)\simeq \prod_{i=1}^k\E({}_iG).$$ 
For instance, using~\cite[Proposition~$1$]{Hamza25}, Theorem~\ref{Koszul graph} and~\cite[Chapter~$3$, Corollary~$1.2$]{polishchuk2005quadratic}, we deduce:
$$P_{\Gamma G}(t)=\prod_{i=1}^kH^\bullet({}_iG,t)=H^\bullet(\Gamma G, t), \quad \text{and}\quad gocha(\Gamma G,t)=\prod_{i=1}^k gocha({}_iG,t)=S_{\Gamma G}(t).$$

$\bullet$ Let us assume that~$\Gamma$ is not a complete graph.
Then following the notations and arguments from the proof of Theorem~\ref{Koszul graph}~$(ii)$ we have the following isomorphisms:
$$\Gamma G \simeq ({}^a\Gamma{}^aG) \coprod_{({}^c\Gamma {}^cG)} ({}^b\Gamma {}^bG), \quad \text{and} \quad \Gamma \E(G) \simeq {}^a\Gamma\E({}^aG) \coprod_{{}^c\Gamma \E({}^cG)} {}^b\Gamma \E({}^bG).$$
From~\cite[Lemme~$5.1.10$]{Lemaire} and Theorem~\ref{Koszul graph}, we have:
\begin{multline*}
H^\bullet(\Gamma G,t)=H^\bullet({}^a\Gamma {}^aG,t)+H^\bullet({}^b\Gamma {}^bG,t)-H^\bullet({}^c\Gamma {}^cG,t)=P_{{}^a\Gamma {}^a G}(t)+P_{{}^b\Gamma {}^b G}(t)-P_{{}^c\Gamma {}^c G}(t) 
\\ =1+\sum_{m\geq 1}\sum_{ (i_1,\dots, i_m) \in {}^a\Gamma_m} (H^{\bullet}({}_{i_1}G,t)-1)\dots (H^{\bullet}({}_{i_m}G,t)-1)
\\+\sum_{m\geq 1}\sum_{ (i_1,\dots, i_m) \in {}^b\Gamma_m} (H^{\bullet}({}_{i_1}G,t)-1)\dots (H^{\bullet}({}_{i_m}G,t)-1)
\\-\sum_{m\geq 1}\sum_{ (i_1,\dots, i_m) \in {}^c\Gamma_m} (H^{\bullet}({}_{i_1}G,t)-1)\dots (H^{\bullet}({}_{i_m}G,t)-1)
\\=1+\sum_{m\geq 1}\sum_{ (i_1,\dots, i_m) \in \Gamma_m} (H^{\bullet}({}_{i_1}G,t)-1)\dots (H^{\bullet}({}_{i_m}G,t)-1)=P_{\Gamma G}(t) .
\end{multline*}
From Theorem \ref{Koszul graph}, the algebra $\E(\Gamma G)$ is Koszul. Thus we obtain that~$gocha(\Gamma G,t)=S_{\Gamma G}(t)$, which allows us to conclude.
\end{proof}

\begin{rema}
Alternatively, we can prove Proposition~\ref{computation gocha series} directly from the resolution constructed in Remark~\ref{resol kosz}.
\end{rema}

\begin{coro}
Let $G$ be a family of pro-$p$ groups such that, for every integer~$1\leq i \leq k$, the algebra~$\E({}_iG)$ is Koszul. Then, the cohomological dimension of $\Gamma G$ is
$$ cd( \Gamma G) = \underset{m\in \NN, (i_1,\dots ,i_m) \in \Gamma_m}{\max} \prod_{j=1}^{m} cd({}_{i_j}G).$$
\end{coro}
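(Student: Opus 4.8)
The cohomological dimension $cd(\Gamma G)$ is the largest integer $n$ with $H^n(\Gamma G)\neq 0$: for a pro-$p$ group, the vanishing $H^{>d}(\Gamma G,\F_p)=0$ together with $H^d(\Gamma G,\F_p)\neq 0$ characterises $cd(\Gamma G)=d$. Since each $\E({}_iG)$ is Koszul with $H^\bullet({}_iG)$ supported in finitely many degrees, Proposition~\ref{computation gocha series} shows that $H^\bullet(\Gamma G,t)$ is a polynomial, so $cd(\Gamma G)=\deg_t H^\bullet(\Gamma G,t)<\infty$. The plan is to compute this degree by induction on the number $k$ of vertices of $\Gamma$, following the same dichotomy as in the proof of Proposition~\ref{computation gocha series}.

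If $\Gamma$ is complete, then $\Gamma G\simeq \prod_{i=1}^k {}_iG$ and $H^\bullet(\Gamma G,t)=\prod_{i=1}^k H^\bullet({}_iG,t)$ by Theorem~\ref{Koszul graph}; here one reads off $cd(\Gamma G)$ as the cohomological dimension of the direct product, expressed in terms of the dimensions $cd({}_iG)$. If $\Gamma$ is not complete, I would use the amalgamated decomposition $\Gamma G\simeq ({}^a\Gamma{}^aG)\coprod_{({}^c\Gamma{}^cG)}({}^b\Gamma{}^bG)$ together with the relation $H^\bullet(\Gamma G,t)=H^\bullet({}^a\Gamma{}^aG,t)+H^\bullet({}^b\Gamma{}^bG,t)-H^\bullet({}^c\Gamma{}^cG,t)$ from \cite[Lemme~$5.1.10$]{Lemaire}. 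All Betti numbers are nonnegative, so the leading coefficients cannot cancel and $\deg_t H^\bullet(\Gamma G,t)=\max\{\deg_t H^\bullet({}^a\Gamma{}^aG,t),\deg_t H^\bullet({}^b\Gamma{}^bG,t)\}$. Since $a$ and $b$ are non-adjacent, every clique of $\Gamma$ lies in ${}^a\Gamma$ or in ${}^b\Gamma$, so the inductive hypothesis on the two proper subgraphs assembles into a single maximum over all cliques $(i_1,\dots,i_m)\in\Gamma_m$.

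Putting the two cases together yields
$$cd(\Gamma G)=\max_{m\in\NN,\,(i_1,\dots,i_m)\in\Gamma_m}\prod_{j=1}^m cd({}_{i_j}G).$$
The main obstacle is the complete-graph base case: one must determine exactly how the cohomological dimension of the clique $(i_1,\dots,i_m)$---that is, of the direct product ${}_{i_1}G\times\dots\times{}_{i_m}G$---is governed by the individual dimensions $cd({}_{i_j}G)$, so that it matches the factor $\prod_{j=1}^m cd({}_{i_j}G)$ in the displayed formula. Once this clique contribution is pinned down, the inductive step is pure bookkeeping, as the nonnegativity of the Betti numbers guarantees that the top-degree term produced by the optimal clique is never cancelled in the amalgamation relation.
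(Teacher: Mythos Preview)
Your approach is considerably more elaborate than the paper's. The paper's entire proof reads: ``This is just a consequence of Proposition~\ref{computation gocha series}.'' Indeed, the explicit formula
\[
h^n(\Gamma G)=\sum_{m\geq 1}\sum_{(i_1,\dots,i_m)\in\Gamma_m}\sum_{n_1+\dots+n_m=n} h^{n_1}({}_{i_1}G)\cdots h^{n_m}({}_{i_m}G)
\]
has only nonnegative summands, so $h^n(\Gamma G)>0$ if and only if some clique $(i_1,\dots,i_m)$ admits a decomposition $n=n_1+\cdots+n_m$ with every $h^{n_j}({}_{i_j}G)>0$. The largest such $n$ is obtained by taking $n_j=cd({}_{i_j}G)$, giving $cd(\Gamma G)=\max_{(i_1,\dots,i_m)\in\Gamma_m}\sum_j cd({}_{i_j}G)$. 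There is no need to rerun the amalgamation induction.

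You are right to be stuck on matching the factor $\prod_{j=1}^m cd({}_{i_j}G)$: the displayed formula contains a typo, and the product should be a sum. The cohomological dimension of a direct product of pro-$p$ groups of finite cohomological dimension is the \emph{sum} of the dimensions (K\"unneth), which is exactly what both your complete-graph case and the one-line argument above produce. So the obstacle you flag is not a gap in your reasoning but an error in the statement.

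One further remark on your inductive step: the sentence ``All Betti numbers are nonnegative, so the leading coefficients cannot cancel'' is not quite sufficient on its own, because of the minus sign in front of $H^\bullet({}^c\Gamma{}^cG,t)$. What makes it work is the stronger inequality $h^n({}^c\Gamma{}^cG)\leq h^n({}^a\Gamma{}^aG)$ for every $n$ (and likewise with $b$), which follows from the explicit formula since the cliques of ${}^c\Gamma$ are a subset of those of ${}^a\Gamma$. But once you invoke the explicit formula at that point, you may as well use it globally, as the paper does.
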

\begin{proof}
This is just a consequence of Proposition \ref{computation gocha series}.
\end{proof}

\begin{coro}\label{app Koszul}
Assume that for every $1\leq i\leq k$, the algebra $\E(\F_p,{}_iG)$ is Koszul, and the algebra~$\E(\Z_p, {}_iG)$ is torsion-free over~$\Z_p$. Then we infer:
$$gocha(\Z_p,\Gamma G,t)=gocha(\F_p, \Gamma G,t)=\frac{1}{H^\bullet(\Gamma G, -t)},$$
where
$$H^\bullet(\Gamma G, t)\coloneq 1+\sum_n h^n(\Gamma G)t^n, \quad \text{and } h^n(\Gamma G)=\sum_{m\geq 1}\sum_{ (i_1,\dots, i_m) \in \Gamma_m}\sum_{n_1+\dots+n_m=n} h^{n_1}({}_{i_1}G)\dots h^{n_m}({}_{i_m}G).$$
\end{coro}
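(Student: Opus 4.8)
The plan is to combine the identity for the $\F_p$-gocha series, which is already in hand, with Theorem~\ref{answerminac2} applied to the group $\Gamma G$ itself. Since each ${}_iG$ is a finitely presented pro-$p$ group, the graph product $\Gamma G$ is finitely presented, hence finitely generated. By hypothesis $\E(\F_p, {}_iG)$ is Koszul for every $1\leq i \leq k$, so Proposition~\ref{computation gocha series} applies to $\Gamma G$ and yields
$$gocha(\F_p, \Gamma G, t) = \frac{1}{H^\bullet(\Gamma G, -t)}, \quad \text{with } h^n(\Gamma G) = \sum_{m\geq 1}\sum_{(i_1,\dots,i_m)\in \Gamma_m}\sum_{n_1+\dots+n_m=n} h^{n_1}({}_{i_1}G)\cdots h^{n_m}({}_{i_m}G).$$
This is precisely the second equality asserted in the statement together with the formula for $h^n(\Gamma G)$, so it remains only to prove $gocha(\Z_p, \Gamma G, t) = gocha(\F_p, \Gamma G, t)$, i.e.\ $c_n(\Z_p, \Gamma G) = c_n(\F_p, \Gamma G)$ for every positive integer $n$.

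To deduce this from Theorem~\ref{answerminac2}, I first need $\Ll(\Z_p, \Gamma G)$ to be torsion-free. By hypothesis each $\E(\Z_p, {}_iG)$ is torsion-free over $\Z_p$; as these algebras are locally finite graded $\Z_p$-modules, each graded component is a finitely generated torsion-free module over the principal ideal domain $\Z_p$, hence free, so $\E(\Z_p, {}_iG)$ is free over $\Z_p$. By Lemma~\ref{PBW torsion freeness} this is equivalent to $\Ll(\Z_p, {}_iG)$ being free over $\Z_p$, so the family $G$ satisfies the condition $(\ast_{\Z_p})$. Theorem~\ref{main theo Zp} (equivalently Theorem~\ref{main result}$(i)$ for $\AA=\Z_p$) then shows that $\Ll(\Z_p, \Gamma G)$ is torsion-free.

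Now I apply Theorem~\ref{answerminac2} with $Q \coloneq \Gamma G$: since $\Gamma G$ is a finitely generated pro-$p$ group and $\Ll(\Z_p, \Gamma G)$ is torsion-free, the theorem gives $c_n(\F_p, \Gamma G) = c_n(\Z_p, \Gamma G)$ for every positive integer $n$. By the very definition of the gocha series this forces $gocha(\Z_p, \Gamma G, t) = gocha(\F_p, \Gamma G, t)$, and combining with the identity extracted from Proposition~\ref{computation gocha series} completes the proof. There is no serious obstacle beyond bookkeeping: the only points that require care are the elementary reduction ``torsion-free over $\Z_p$ $\Rightarrow$ free over $\Z_p$'' for the graded pieces, which is what allows Lemma~\ref{PBW torsion freeness} to be invoked and shows $G$ satisfies $(\ast_{\Z_p})$, and the verification that $\Gamma G$ indeed lies within the scope of Theorem~\ref{answerminac2}.
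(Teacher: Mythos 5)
Your proof is correct and follows essentially the same route as the paper: Proposition~\ref{computation gocha series} gives the $\F_p$-gocha identity and the formula for $h^n(\Gamma G)$, and Theorem~\ref{answerminac2} applied to $Q=\Gamma G$ gives $c_n(\F_p,\Gamma G)=c_n(\Z_p,\Gamma G)$. The only difference is that you spell out the verification (via Lemma~\ref{PBW torsion freeness}, the condition $(\ast_{\Z_p})$, and Theorem~\ref{main theo Zp}) that $\Ll(\Z_p,\Gamma G)$ is torsion-free, a step the paper leaves implicit; this is a welcome clarification rather than a deviation.
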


\begin{proof}

From Proposition~\ref{computation gocha series}, we have 
    $$gocha(\F_p, \Gamma G,t)=\frac{1}{H^\bullet(\Gamma G, -t)}.$$
    We conclude with Theorems~\ref{answerminac} and \ref{answerminac2}.
\end{proof}



\section{Abstract group}
Observe that the category of finitely generated abstract groups is endowed with a coproduct. Thus, if $G:=\{{}_1G,\dots, {}_kG\}$ is a family of finitely generated abstract groups, we similarly define its graph product $\Gamma G$, which is also finitely generated. Let us fix a prime~$p$ and consider a finitely generated abstract group $Q$. Then we define its pro-$p$ completion~$\widehat{Q}^p$, and~$c_Q$ the natural morphism~$c_Q\colon Q \to \widehat{Q}^p$. Note that~$\widehat{Q}^p$ is a finitely generated pro-$p$ group. From a family~$G$, we infer a family~$\widehat{G}^p:=\{\widehat{{}_1G}^p,\dots , \widehat{{}_kG}^p\}$ of finitely generated pro-$p$ groups. Rather than~$c_{{}_iG}$, we will use the notation~${}_ic\colon {}_iG\to \widehat{{}_iG}^p$. We define $H^\bullet(Q)$ the cohomology ring of the abstract group~$Q$ over~$\F_p$, where~$\F_p$ is endowed with trivial~$Q$-action. We also denote by~$h^n(Q)$, when it is defined, the integer~$\dim_{\F_p} H^n(Q)$, for every positive integer~$n$. Let us recall that we always consider the continuous cohomology for pro-$p$ groups. So~$H^n(\widehat{Q}^p)$ is the continuous~$n$-th cohomology group of the pro-$p$ group~$\widehat{Q}^p$ over~$\F_p$. The group~$Q$ is said~$p$-cohomologically complete if for every positive integer~$n$, we have~$H^n(Q)\simeq H^n(\widehat{Q}^p)$. 


Set~$\mathbb{B}$ either the ring~$\F_p$ or~$\Z$. Let~$A$ and $B$ be subgroups of~$Q$. We denote by $\lbrack A, B\rbrack$ and $AB$ the (abstract) normal subgroups of~$Q$ generated by commutators~$\lbrack a,b\rbrack$ and products~$ab$, with $a$ in $A$ and $b$ in $B$. We also denote by~$A^p$ the normal subgroup of~$Q$ generated by~$a^p$ for~$a$ in~$A$.
We define $Q_\bullet(\Z)$ and $Q_\bullet(\F_p)$ the lower central and the $p$-Zassenhaus series of $Q$ (as an abstract group). Precisely $Q_1(\Z)=Q_1(\F_p):=Q$ and for every positive integer~$n$:
\begin{equation*}
Q_n(\Z):=[Q,Q_{n-1}(\Z)], \quad Q_n(\F_p):=Q_{\lceil \frac{n}{p} \rceil}(\F_p)^p\prod_{i+j=n}[Q_i(\F_p),Q_j(\F_p)].
\end{equation*}

Let~$\Ll(\mathbb{B})$ be the free graded~$\mathbb{B}$-Lie algebra on~$\{X_1,\dots ,X_d\}$. Let~$F$ be an abstract finitely generated free group on~$d$ generators. Similarly to the pro-$p$ case, we have a Magnus isomorphism, which induces an isomorphism:
$$\Ll(\mathbb{B})\simeq \bigoplus_n F_n(\mathbb{B})/F_{n+1}(\mathbb{B}).$$ 
This allows us to introduce~$\Ll(\mathbb{B},Q)$, which is the quotient of~$\Ll(\mathbb{B})$ isomorphic to $\bigoplus_{n\in \NN} Q_n(\mathbb{B})/Q_{n+1}(\mathbb{B})$, through the previous isomorphism. Let us also note that the map~$c_Q\colon Q\to \widehat{Q}^p$ induces two morphisms:
 $$c_Q(\F_p)\colon \Ll(\F_p,Q)\to \Ll(\F_p, \widehat{Q}^p), \quad \text{and} \quad c_Q(\Z_p)\colon \Ll(\Z, Q)\otimes_{\Z}\Z_p \to \Ll(\Z_p, \widehat{Q}^p),$$
satisfying~$c_Q(\AA)(X_i)=X_i$. Observe, when~$Q$ is finitely generated, that~$c_Q(\F_p)$ is an isomorphism.

 This section has two goals. We introduce a first subsection, where we recall tools needed to study pro-$p$ completions of graph products. The second subsection computes the cohomology of~$\Gamma G$, when~$G$ is a family of~$p$-cohomologically complete groups. The last subsection computes~$\Ll(\mathbb{B},\Gamma G)$, under some technical conditions on the family~$G$ related to the pro-$p$ case (precisely~$(\ast_\Z)$).

\subsection{Facts on graph products and their pro-$p$ completions}\label{facts}

We have the following easy results that we will often use later without citing them.

\begin{lemm}\label{completion of graph}
We have an isomorphism 
$\widehat{\Gamma G}^p \simeq \Gamma \widehat{G}^p.$
\end{lemm}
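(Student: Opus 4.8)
The plan is to show that pro-$p$ completion commutes with the graph product construction, by exhibiting the universal property. First I would recall that the functor $Q \mapsto \widehat{Q}^p$ is left adjoint to the inclusion of the category of pro-$p$ groups into the category of abstract groups, hence it preserves all colimits; in particular it commutes with coproducts, so $\widehat{\coprod_{i=1}^k {}_iG}^p \simeq \coprod_{i=1}^k \widehat{{}_iG}^p$ (the coproduct on the right being the pro-$p$ coproduct). This handles the free part of the construction.

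Next I would deal with the relations. Recall that $\Gamma G = \coprod_{i=1}^k {}_iG / N$, where $N$ is the \emph{abstract} normal subgroup generated by the commutators $[\iota_u({}_uG), \iota_v({}_vG)]$ for $\{u,v\} \in \bE$, and similarly $\Gamma \widehat{G}^p = \coprod_{i=1}^k \widehat{{}_iG}^p / \overline{N}$, where $\overline{N}$ is the \emph{closed} normal subgroup generated by the images of the same commutators under the canonical maps into the pro-$p$ coproduct. Since pro-$p$ completion is a right-exact-type functor on this quotient (more precisely, $\widehat{A/N}^p \simeq \widehat{A}^p / \overline{\langle \text{image of } N\rangle}$, the closure of the normal subgroup topologically generated by the image of $N$), applying $\widehat{\phantom{Q}}^p$ to the presentation of $\Gamma G$ yields
$$\widehat{\Gamma G}^p \simeq \widehat{\coprod_{i=1}^k {}_iG}^p \big/ \overline{\langle \text{image of } N\rangle} \simeq \Big(\coprod_{i=1}^k \widehat{{}_iG}^p\Big) \big/ \overline{\langle [\iota_u(\widehat{{}_uG}^p), \iota_v(\widehat{{}_vG}^p)] \mid \{u,v\}\in \bE\rangle} = \Gamma \widehat{G}^p,$$
where one uses that the image of $N$ under the completion map topologically generates exactly the closed normal subgroup generated by the commutators of the (dense images of the) factors, since commutators are continuous and the ${}_iG$ are dense in $\widehat{{}_iG}^p$.

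The main obstacle, and the only point requiring genuine care, is the compatibility of pro-$p$ completion with the quotient: one must check that $\overline{\langle \text{image of } N \rangle}$ in $\coprod \widehat{{}_iG}^p$ really coincides with the closed normal subgroup $\overline{N}$ appearing in the definition of $\Gamma \widehat{G}^p$. This follows because the canonical map $\coprod_{i=1}^k {}_iG \to \coprod_{i=1}^k \widehat{{}_iG}^p$ has dense image, commutators in the pro-$p$ group are limits of commutators of elements of the dense subgroup, and the closed normal subgroup generated by a dense subset of a generating set equals the closed normal subgroup generated by that set. Everything else is a formal diagram chase using the universal property: a continuous homomorphism from $\Gamma \widehat{G}^p$ to a pro-$p$ group $P$ is the same as a compatible family of continuous homomorphisms $\widehat{{}_iG}^p \to P$ killing the relevant commutators, which (by the universal property of completion applied factorwise) is the same as a compatible family of abstract homomorphisms ${}_iG \to P$ killing those commutators, which is the same as an abstract homomorphism $\Gamma G \to P$, which is the same as a continuous homomorphism $\widehat{\Gamma G}^p \to P$. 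Since the two pro-$p$ groups corepresent the same functor, they are isomorphic, and one checks the isomorphism sends each ${}_ix_j$ to ${}_ix_j$.
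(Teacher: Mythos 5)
Your proof is correct and takes essentially the same approach as the paper, which disposes of the lemma in one line by observing that $\widehat{\Gamma G}^p$ and $\Gamma \widehat{G}^p$ admit the same presentation as pro-$p$ groups (equivalently, corepresent the same functor on pro-$p$ groups). Your version simply spells out the details behind that observation: left-adjointness of completion for the coproduct, and the density/universal-property argument identifying the two closed normal subgroups of relations.
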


\begin{proof}
We note that~$\widehat{\Gamma G}^p$ and~$\Gamma \widehat{G}^p$ have the same presentation as pro-$p$ groups. Thus they are isomorphic.
\end{proof}

\begin{lemm}\label{p-embed}
Let~${}'\Gamma\coloneq ({}'\bX, {}'\bE)$ be a subgraph of~$\Gamma\coloneq (\bX, \bE)$, and consider~${}'G$ the subfamily of~$G$ indexed by~${}'\bX$. Then we have the following monomorphisms:
$$\iota_{\Gamma/'\Gamma}^{abs} \colon {}'\Gamma ({}'G) \hookrightarrow \Gamma G, \quad \text{and } \iota_{\Gamma/'\Gamma}^p \colon \widehat{{}'\Gamma ({}'G)}^p \hookrightarrow \widehat{\Gamma G}^p.$$
\end{lemm}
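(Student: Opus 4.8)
The plan is to prove the two monomorphisms separately, with the abstract case being essentially standard and the pro-$p$ case requiring a functoriality argument together with a retraction. First I would establish the abstract statement: $\iota_{\Gamma/{}'\Gamma}^{abs}\colon {}'\Gamma({}'G)\hookrightarrow \Gamma G$. The key observation is that there is a canonical retraction. Define $\rho\colon \Gamma G \to {}'\Gamma({}'G)$ by sending, for $i\in {}'\bX$, each generator of ${}_iG$ to itself, and for $i\in \bX\setminus {}'\bX$, every generator of ${}_iG$ to the identity. One checks this respects the defining relations: the relations internal to each ${}_iG$ with $i\in {}'\bX$ are preserved, the relations internal to ${}_iG$ with $i\notin {}'\bX$ map to the trivial relation, and each commutator relation $[\iota_u({}_ua),\iota_v({}_va)]$ for $\{u,v\}\in\bE$ maps either to the corresponding relation in ${}'\Gamma({}'G)$ (if $u,v\in{}'\bX$, noting $\{u,v\}\in{}'\bE$ since ${}'\Gamma$ is an induced subgraph in this setup) or to a trivial commutator (if one of $u,v$ lies outside ${}'\bX$). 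Hence $\rho$ is a well-defined group homomorphism, and by construction $\rho\circ \iota_{\Gamma/{}'\Gamma}^{abs}=\mathrm{id}_{{}'\Gamma({}'G)}$, so $\iota_{\Gamma/{}'\Gamma}^{abs}$ is injective.

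Next I would deduce the pro-$p$ statement. By Lemma~\ref{completion of graph} we have $\widehat{{}'\Gamma({}'G)}^p\simeq {}'\Gamma(\widehat{{}'G}^p)$ and $\widehat{\Gamma G}^p\simeq \Gamma\widehat{G}^p$, so it suffices to show the natural map ${}'\Gamma(\widehat{{}'G}^p)\to \Gamma\widehat{G}^p$ is a monomorphism of pro-$p$ groups. I would run exactly the same retraction argument in the pro-$p$ category: define $\widehat\rho\colon \Gamma\widehat{G}^p\to {}'\Gamma(\widehat{{}'G}^p)$ on generators as above; since both sides are pro-$p$ groups presented by the same kind of generators and relations, and the assignment kills the relations, $\widehat\rho$ is a continuous homomorphism with $\widehat\rho\circ \iota_{\Gamma/{}'\Gamma}^p=\mathrm{id}$, giving injectivity of $\iota_{\Gamma/{}'\Gamma}^p$. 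Alternatively, one can observe that applying the pro-$p$ completion functor to the abstract retraction pair gives a retraction pair on completions; injectivity of $\iota_{\Gamma/{}'\Gamma}^p$ then follows because it admits a left inverse.

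The main subtlety — and where I would be careful — is the hypothesis that ${}'\Gamma$ is an induced (full) subgraph of $\Gamma$, i.e.\ that $\{u,v\}\in{}'\bE$ whenever $u,v\in{}'\bX$ and $\{u,v\}\in\bE$. This is what guarantees that a commutator relation of $\Gamma G$ between two vertices of ${}'\bX$ is actually a relation in ${}'\Gamma({}'G)$ rather than an extra relation that would obstruct well-definedness of $\rho$. Given the table of notations in the excerpt, where ${}'\Gamma$ is consistently taken to be "a subgraph of $\Gamma$" in the induced-subgraph sense used for subfamilies, this holds, but I would state it explicitly. A secondary point is that one should check $\widehat\rho$ is continuous; this is automatic since it is defined as the unique pro-$p$ extension of a homomorphism on the (dense, finitely generated) abstract graph product, or directly because a homomorphism between pro-$p$ groups specified compatibly on a presentation is automatically continuous. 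Everything else is routine diagram-chasing, so no serious obstacle remains.
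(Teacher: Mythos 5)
Your proof is correct and follows essentially the same route as the paper: the paper's proof of this lemma reduces to the argument of Lemma~\ref{injection of graphs}, namely that the canonical retraction $\iota_{\Gamma/{}'\Gamma}\colon \Gamma G\to {}'\Gamma({}'G)$ (your $\rho$, killing the vertex groups outside ${}'\bX$) composes with the inclusion to the identity, in both the abstract and the pro-$p$ categories. Your explicit check that $\rho$ respects the relations, and your remark that one must take ${}'\Gamma$ to be an \emph{induced} subgraph (without which the statement is false, e.g.\ a discrete subgraph inside an edge), make explicit points the paper leaves implicit.
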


\begin{proof}
We define~$\iota_{\Gamma/'\Gamma}^{abs}\colon {}'\Gamma ({}'G) \to \Gamma G$ and~$\iota_{\Gamma/'\Gamma}^p \colon \widehat{{}'\Gamma ({}'G)}^p \hookrightarrow \widehat{\Gamma G}^p$ the canonical maps sending~${}_ix_j$ to~${}_ix_j$ for~$i\in {}'\bX$ and~$1\leq j \leq {}_id$. 

Similarly to the proof of Lemma~\ref{injection of graphs}, we see that~$\iota_{\Gamma/'\Gamma}^{abs}$ and~$\iota_{\Gamma/'\Gamma}^p$ are monomorphisms. 
\end{proof}

\subsection{Cohomology on graph products}

The proof of the following result is based on amalgamated products:
\begin{coro}\label{cohomological completness}
We assume that for every~$1\leq i \leq k$, the finitely generated abstract group~${}_iG$ is $p$-cohomologically complete. Then, the group~$\Gamma G$ is~$p$-cohomologically complete.
\end{coro}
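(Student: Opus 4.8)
The plan is to induct on $k$, the number of vertices of $\Gamma$, exactly mirroring the structure of the proofs of Theorem~\ref{iso finite-case} and Theorem~\ref{Koszul graph}. The base case $k=1$ is immediate: $\Gamma G = {}_1G$ is $p$-cohomologically complete by hypothesis. For $k>1$ we distinguish the same two cases as before.

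First, if $\Gamma$ is a complete graph, then $\Gamma G \simeq \prod_{i=1}^k {}_iG$ and $\widehat{\Gamma G}^p \simeq \Gamma \widehat{G}^p \simeq \prod_{i=1}^k \widehat{{}_iG}^p$ by Lemma~\ref{completion of graph}. The cohomology of a finite direct product is computed by the Künneth formula over the field $\F_p$, both in the abstract and in the (continuous) pro-$p$ setting, so $H^\bullet(\Gamma G) \simeq \bigotimes_{i=1}^k H^\bullet({}_iG)$ and likewise $H^\bullet(\widehat{\Gamma G}^p) \simeq \bigotimes_{i=1}^k H^\bullet(\widehat{{}_iG}^p)$; the $p$-cohomological completeness of each ${}_iG$ then gives that of $\Gamma G$ degree by degree.

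Second, if $\Gamma$ is not complete, pick non-adjacent vertices $a,b$ and form the subgraphs ${}^a\Gamma,{}^b\Gamma,{}^c\Gamma$ and subfamilies ${}^aG,{}^bG,{}^cG$ as in the proof of Theorem~\ref{iso finite-case}$(ii)$. By Lemma~\ref{p-embed} and~\cite[Proposition~$9.2.1$]{RibesZal} (and its abstract analogue, cf.~\cite[Proof of the Lemma~$3.20$]{ruthgraph}) we have decompositions as amalgamated products
$$\Gamma G \simeq ({}^a\Gamma\,{}^aG) \coprod_{({}^c\Gamma\,{}^cG)} ({}^b\Gamma\,{}^bG), \qquad \widehat{\Gamma G}^p \simeq (\widehat{{}^a\Gamma\,{}^aG}^p) \coprod_{(\widehat{{}^c\Gamma\,{}^cG}^p)} (\widehat{{}^b\Gamma\,{}^bG}^p),$$
where the pro-$p$ decomposition uses Lemma~\ref{completion of graph} together with the fact that the subgroup maps remain injective after pro-$p$ completion. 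Both in the abstract category and in the pro-$p$ category, an amalgamated product $A\coprod_C B$ along monomorphisms gives rise to a Mayer--Vietoris long exact sequence in $\F_p$-cohomology
$$\cdots \to H^n(A\textstyle\coprod_C B) \to H^n(A)\oplus H^n(B) \to H^n(C) \to H^{n+1}(A\textstyle\coprod_C B)\to \cdots,$$
provided the map $H^n(A)\oplus H^n(B)\to H^n(C)$ is surjective in each degree (equivalently, the sequence splits into short exact sequences); this is where Lorensen's work~\cite{LORENSEN20106} enters, since he established precisely this compatibility of Mayer--Vietoris sequences under pro-$p$ completion for the amalgamated products arising from graph products. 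By the induction hypothesis combined with Theorem~\ref{iso finite-case} (which guarantees that the relevant cohomology rings — hence the surjectivity of restriction maps — behave as in the free graph case), the three groups ${}^a\Gamma\,{}^aG$, ${}^b\Gamma\,{}^bG$, ${}^c\Gamma\,{}^cG$ are $p$-cohomologically complete, and ${}^c\Gamma\,{}^cG$ embeds into the other two with the expected behaviour on cohomology. A diagram chase (five lemma) applied to the morphism of Mayer--Vietoris sequences induced by $c_{\Gamma G}\colon \Gamma G \to \widehat{\Gamma G}^p$ then forces $H^n(\Gamma G)\simeq H^n(\widehat{\Gamma G}^p)$ for all $n$.

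The main obstacle I anticipate is verifying that the Mayer--Vietoris sequences do split (so that one gets a genuine morphism of long exact sequences compatible with the comparison maps), both for the abstract amalgamated product and for its pro-$p$ analogue, and that these two sequences are compatible under the completion maps. This is exactly the subtlety handled by Lorensen's stability results~\cite{LORENSEN20106}: one must check that the restriction map $H^\bullet({}^a\Gamma\,{}^aG)\oplus H^\bullet({}^b\Gamma\,{}^bG)\to H^\bullet({}^c\Gamma\,{}^cG)$ is surjective, which follows from the explicit ring structure of graph-product cohomology (Corollary~\ref{coho graph}) — indeed each generator of $H^\bullet({}^c\Gamma\,{}^cG)$ visibly lifts to $H^\bullet({}^a\Gamma\,{}^aG)$. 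Once surjectivity is in hand, the five-lemma argument is routine, and the result follows.
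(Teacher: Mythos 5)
Your proposal is correct and follows essentially the same route as the paper: induction on the number of vertices, the Künneth formula (abstract and continuous) for the complete-graph case, and the amalgamated-product decomposition from Lemma~\ref{p-embed} together with Lorensen's stability result \cite[Corollary~$3.2$]{LORENSEN20106} for the non-complete case. One minor caution: your side remark deriving the surjectivity of the restriction maps from Corollary~\ref{coho graph} would import a Koszulity hypothesis not assumed here, but this is unnecessary since Lorensen's result (which both you and the paper invoke) already supplies the required Mayer--Vietoris compatibility in the general setting.
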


\begin{proof}
We compare the cohomology groups of~$\Gamma G$ and~$\widehat{\Gamma G}^p$. The argument is very similar to the proof of Theorem~\ref{iso finite-case}.

We proceed by induction on~$k$, the number of vertices of~$\Gamma$ (and also the number of groups in the family~$G$). If~$k=1$, then~$G\coloneq \{{}_1G\}$ and~$\Gamma G\coloneq {}_1G$ is~$p$-cohomologically complete. Assume that~$k>1$. We distinguish two cases. 

$\bullet$ If~$\Gamma$ is complete, then~$\Gamma G\simeq \prod_{i=1}^k {}_iG$. Thus from Künneth formula, and its pro-$p$ version given in~\cite[Chapter~II, §4, Exercise~$7$]{NSW}, we show that for every positive integer~$n$:
\begin{multline*}
H^n(\Gamma G)\simeq \bigoplus_{\{j_1,\dots, j_l\}\subset \{1,\dots, k\}} \bigoplus_{n_1+\dots+n_l=n}H^{n_1}({}_{j_1}G)\otimes \dots \otimes                                                     H^{n_{l}}({}_{j_l}G)
\\\simeq \bigoplus_{\{j_1,\dots, j_l\}\subset \{1,\dots, k\}} \bigoplus_{n_1+\dots+n_l =n}H^{n_1}({}_{j_1}\widehat{G}^p)\otimes \dots \otimes                                                     H^{n_l}({}_{j_l}\widehat{G}^p)\simeq H^n(\Gamma \widehat{G}^p).
\end{multline*}

$\bullet$ We assume that~$\Gamma$ is not complete. Then there exists two vertices~$a$ and~$b$ which are not connected by an edge. We define~${}^a\Gamma$, ${}^b\Gamma$ and ${}^c\Gamma$ the subgraphs of~$\Gamma$ with vertices~$\bX\setminus \{a\}$, $\bX\setminus \{b\}$ and~$\bX\setminus \{a,b\}$. We define~${}^aG\coloneq G\setminus \{{}_{a}G\}$,  ${}^bG \coloneq G\setminus \{{}_{b}G\}$ and~${}^cG \coloneq G\setminus \{ {}_{a}G, {}_{b}G\}$. From Lemma~\ref{p-embed} and \cite[Proposition~$9.2.1$]{RibesZal}, we observe (similarly to~\cite[Proof of Lemma~$3.20$]{ruthgraph}) that:
$$\Gamma G\simeq {}^a\Gamma({}^aG) \coprod_{{}^c\Gamma ({}^cG)} {}^b\Gamma ({}^bG), \quad \text{and} \quad \Gamma \widehat{G}^p\simeq ({}^a\Gamma) \widehat{{}^aG}^p \coprod_{({}^c\Gamma) \widehat{{}^cG}^p} ({}^b\Gamma) \widehat{{}^bG}^p.$$
By the induction hypothesis, the groups~${}^a\Gamma({}^aG)$, ${}^b\Gamma({}^bG)$ and ${}^c\Gamma({}^cG)$ are $p$-cohomologically complete. Using Lemma~\ref{p-embed} and~\cite[Corollary~$3.2$]{LORENSEN20106}, we conclude that~$\Gamma G$ is $p$-cohomologically complete.
\end{proof}

\begin{rema}\label{computation cohomology abstract groups}
The author would like to thank Li Cai for bringing the following general result to his attention. We assume that for every~$1\leq i \leq k$ and for every positive integer~$n$ we have $h^n({}_iG)<\infty$. Then, for every positive integer~$n$, we obtain, from \cite[Theorem~$2.35$]{bahri2010polyhedral}, the formula:
$$h^n(\Gamma G)=\sum_{m\geq 1}\sum_{ (i_1,\dots, i_m) \in \Gamma_m}\sum_{n_1+\dots+n_m=n} h^{n_1}({}_{i_1}G)\dots h^{n_m}({}_{i_m}G).$$
From this argument and Proposition~\ref{computation gocha series}, we can deduce an alternative proof of~Corollary~\ref{cohomological completness}, when~$\E(\F_p,\widehat{{}_iG}^p)$ is Koszul, for~$1\leq i \leq k$.
\end{rema}

\begin{exem}
Let us consider the case where~${}_1G={}_2G=\dots ={}_kG=\Z$. We observe that~$\Z$ is~$p$-cohomologically complete for every prime~$p$. Thus RAAGs are~$p$-cohomologically complete for every~$p$. This case was studied by Lorensen using HNN-extensions in~\cite{LORENSEN20106}.
\end{exem}

\subsection{Filtrations on graph products}
For every~$i$, we denote by~${}_ic$ the natural morphism ${}_ic\colon {}_iG\to \widehat{{}_iG}^p$. Observe that~${}_ic$ induces two morphisms:
$${}_ic(\F_p)\colon \Ll(\F_p, {}_iG)\to \Ll(\F_p, \widehat{{}_iG}^p), \quad \text{and} \quad {}_ic(\Z_p)\colon \Ll(\Z, {}_iG)\otimes_{\Z}\Z_p \to \Ll(\Z_p, \widehat{{}_iG}^p).$$
Since~${}_iG$ is finitely generated, then the map~${}_ic(\F_p)$ is an isomorphism. Let us recall that the family $G$ satisfies $(\ast_{\Z})$ if for every $1\leq i\leq k$, and every prime~$p$ we have:

\begin{equation}\tag{$\ast_\Z$}
\left\{ \begin{aligned}
(i) & \text{ the } \Z_p\text{-module } \Ll(\Z_p, \widehat{{}_iG}^p) \text{ is free,}
\\ (ii) & \text{ the map } {}_ic(\Z_p)\colon \Ll(\Z,{}_iG)\otimes_\Z \Z_p \to \Ll(\Z_p, \widehat{{}_iG}^p) \text{ is an isomorphism.}
\end{aligned} \right.
\end{equation} 

In addition, we recall that a family of finitely generated abstract groups~$G$ defines families of graded locally finite~$\AA$ and~$\mathbb{B}$-Lie algebras~$\Ll(\AA, \widehat{G}^p)$ and~$\Ll(\mathbb{B},G)$. These families allow us to define~$\Gamma \Ll(\AA, \widehat{G}^p)$ and~$\Gamma \Ll(\mathbb{B},G)$.
This subsection aims to show the following result:

\begin{theo}\label{abstract computations}
Let $p$ be a prime and $G$ be a family of finitely generated abstract groups~$G\coloneq \{{}_1G,\dots, {}_kG\}$. 
Then
$$\Ll(\F_p,\Gamma G)\simeq \Gamma \Ll(\F_p,G).$$

Furthermore, if the family $G$ satisfies $(\ast_\Z)$, then
$\Ll(\Z,\Gamma G)$ is torsion-free and 
$$\Ll(\Z, \Gamma G)\simeq \Gamma \Ll(\Z,G), \quad \text{ and } \quad  \Ll(\Z,\Gamma G)\otimes_\Z \Z_p \simeq \Gamma \Ll(\Z_p, \widehat{G}^p).$$
As a consequence, for every integer $n$ and every prime $p$, we have:
$$a_n(\Z,\Gamma G)=a_n(\Z_p,\Gamma \widehat{G}^p).$$
\end{theo}

Let us note, using the Magnus isomorphism (for the abstract case), that~$\Gamma \Ll(\mathbb{B}, G)$ and~$\Ll(\mathbb{B},\Gamma G)$ are both quotients of~$\Ll(\mathbb{B})$, the free-$\mathbb{B}$-graded Lie algebra on~$\{{}_iX_j\}_{1\leq i \leq k, 1\leq j \leq {}_id}$.

\begin{lemm}\label{Gamma epi2}
Assume that~$G$ is a family of finitely generated abstract groups, then we have a canonical surjection:
$$\alpha \colon \Gamma \Ll(\Z, G) \twoheadrightarrow \Ll(\Z, \Gamma G), \quad {}_iX_j \mapsto {}_iX_j,$$
for every~$1\leq i \leq k$ and every~$1\leq j \leq {}_id$. 
\end{lemm}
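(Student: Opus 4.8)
The plan is to transcribe the argument of Lemma~\ref{Gamma epi} into the abstract setting, using the $\Z$-version of the Magnus isomorphism recalled at the start of this section. First I would fix a presentation
\[
1 \to \Gamma R \to F \to \Gamma G \to 1,
\]
where $F$ is the abstract free group on the symbols $\{{}_iX_j : 1\le i\le k,\ 1\le j\le {}_id\}$ and $\Gamma R$ is the normal subgroup of $F$ generated by the relators $\{{}_il_j\}$ of the groups ${}_iG$ (read inside $F$ through the inclusion of the free subgroup ${}_iF$ on $\{{}_iX_j\}$) together with the commutators $[{}_uX_a,{}_vX_b]$ for $\{u,v\}\in\bE$. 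Through the isomorphism $\Ll(\Z)\simeq\bigoplus_n F_n(\Z)/F_{n+1}(\Z)$, the Lie algebra $\Ll(\Z,\Gamma G)$ is the quotient $\Ll(\Z)/\Rr(\Gamma)$, where
\[
\Rr(\Gamma)\coloneq\bigoplus_n \bigl(\Gamma R\cap F_n(\Z)\bigr)\big/\bigl(\Gamma R\cap F_{n+1}(\Z)\bigr)
\]
is a graded Lie ideal, and the quotient map $\widetilde\alpha\colon\Ll(\Z)\twoheadrightarrow\Ll(\Z,\Gamma G)$ is surjective since $\Gamma G$ is generated by the ${}_ix_j$.

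Next I would realise $\Gamma\Ll(\Z,G)$ as a quotient of the same $\Ll(\Z)$. Writing $\Ll(\Z,{}_iG)={}_i\Ll/{}_i\Rr$ with ${}_i\Ll$ the free Lie algebra on $\{{}_iX_j\}$ and ${}_i\Rr\simeq\bigoplus_n({}_iR\cap{}_iF_n(\Z))/({}_iR\cap{}_iF_{n+1}(\Z))$ (Magnus for ${}_iG$), the coproduct $\coprod_i {}_i\Ll$ is the free Lie algebra on the disjoint union of the generating sets, hence equals $\Ll(\Z)$, and each canonical map ${}_i\Ll\hookrightarrow\Ll(\Z)$ is injective; in particular each ${}_i\Rr$ embeds as a graded subspace of $\Ll(\Z)$. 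By definition of the graph product, $\Gamma\Ll(\Z,G)=\Ll(\Z)/\Gamma\Rr$, where $\Gamma\Rr$ is the graded Lie ideal of $\Ll(\Z)$ generated by $\bigcup_i {}_i\Rr$ together with $\gamma\coloneq\{[{}_uX_a,{}_vX_b] : \{u,v\}\in\bE\}$.

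It then remains to check the inclusion $\Gamma\Rr\subseteq\Rr(\Gamma)$, after which $\widetilde\alpha$ factors through $\Ll(\Z)/\Gamma\Rr=\Gamma\Ll(\Z,G)$, yielding the desired surjection $\alpha$ with $\alpha({}_iX_j)={}_iX_j$. For this: each relator ${}_il_j$ lies in $\Gamma R$, so ${}_iR\subseteq\Gamma R$, whence ${}_iR\cap F_n(\Z)\subseteq\Gamma R\cap F_n(\Z)$ (using ${}_iF_n(\Z)\subseteq F_n(\Z)$, as the inclusion ${}_iF\hookrightarrow F$ respects lower central series), and therefore ${}_i\Rr\subseteq\Rr(\Gamma)$; and each $[{}_uX_a,{}_vX_b]$ with $\{u,v\}\in\bE$ is the class in $F_2(\Z)/F_3(\Z)$ of the element $[{}_ux_a,{}_vx_b]\in\Gamma R\cap F_2(\Z)$, hence lies in $\Rr(\Gamma)$. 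Since $\Rr(\Gamma)$ is a Lie ideal, it contains the ideal $\Gamma\Rr$ generated by these elements. The only point deserving careful wording — and the one I would spell out — is the identification $\Gamma\Ll(\Z,G)\simeq\Ll(\Z)/\Gamma\Rr$, i.e.\ that the coproduct of the free Lie algebras ${}_i\Ll$ is $\Ll(\Z)$ and that the graph-product ideal corresponds exactly to $\Gamma\Rr$; there is no genuine obstacle here, the lemma being a formal consequence of the abstract Magnus isomorphism once these identifications are made, precisely as in the pro-$p$ case treated in Lemma~\ref{Gamma epi}.
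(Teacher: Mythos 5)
Your proposal is correct and follows essentially the same route as the paper: the paper's proof of this lemma simply states that it is "very similar to the proof of Lemma~\ref{Gamma epi}, using the Magnus isomorphism in the abstract case," and your argument is precisely that transcription, identifying both $\Gamma\Ll(\Z,G)$ and $\Ll(\Z,\Gamma G)$ as quotients of $\Ll(\Z)$ and checking the inclusion of ideals $\Gamma\Rr\subseteq\Rr(\Gamma)$. Your write-up is in fact more detailed than what the paper records.
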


\begin{proof}
The proof is very similar to the proof of Lemma~\ref{Gamma epi}, using the Magnus isomorphism in the abstract case.
\end{proof}

Our main strategy is to prove, under the condition~$(\ast_\Z)$, that~$\alpha$ is an isomorphism. Before proving Theorem~\ref{abstract computations}, we need the following result:

\begin{lemm}\label{graph product Z}
Assume that the family $G$ satisifes $(\ast_\Z)$. Then $\Gamma \Ll(\Z,G)$ is torsion-free over $\Z$, and for every prime~$p$, we have an isomorphism:
$$\omega_p\colon \Ll(\Z_p, \Gamma \widehat{G}^p) \simeq \left( \Gamma \Ll(\Z,G)\right)\otimes_\Z \Z_p, \quad {}_iX_j \mapsto {}_iX_j,$$
for every~$1\leq i \leq k$ and every~$1\leq j \leq {}_id$.
\end{lemm}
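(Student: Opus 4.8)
The plan is to deduce the lemma from the pro-$p$ statements already proved and from the compatibility of the graph product with base change along $\Z\to\Z_p$. First I would note that $(\ast_\Z)(i)$ says exactly that the family $\widehat{G}^p$ of finitely generated pro-$p$ groups satisfies the hypothesis $(\ast_{\Z_p})$. Hence Theorem~\ref{main theo Zp} applies to $\widehat{G}^p$: the module $\Ll(\Z_p,\Gamma\widehat{G}^p)$ is torsion-free (indeed free, by Proposition~\ref{ast and graph}) over $\Z_p$, and there is an isomorphism $\Ll(\Z_p,\Gamma\widehat{G}^p)\simeq\Gamma\Ll(\Z_p,\widehat{G}^p)$ which sends each distinguished generator ${}_iX_j$ to ${}_iX_j$. (Recall also $\widehat{\Gamma G}^p\simeq\Gamma\widehat{G}^p$ from Lemma~\ref{completion of graph}.)

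Next I would use $(\ast_\Z)(ii)$: the map ${}_ic(\Z_p)$ identifies $\Ll(\Z_p,\widehat{{}_iG}^p)$ with $\Ll(\Z,{}_iG)\otimes_\Z\Z_p$ compatibly with the generators ${}_iX_j$, so the family $\Ll(\Z_p,\widehat{G}^p)$ is the base change along $\Z\to\Z_p$ of the family $\Ll(\Z,G)$. It then remains to check that forming the graph product commutes with this base change, i.e.\ $\Gamma\Ll(\Z_p,\widehat{G}^p)\simeq\big(\Gamma\Ll(\Z,G)\big)\otimes_\Z\Z_p$ compatibly with the generators. This is formal: writing $\Gamma\Ll(\Z,G)=\Ll(\Z)/\Gamma\Rr$ as in the proof of Lemma~\ref{Gamma epi2} (with $\Gamma\Rr$ the $\Z$-Lie ideal of $\Ll(\Z)$ generated by the relation spaces ${}_i\Rr$ and the commutators $[{}_uX_a,{}_vX_b]$ for $\{u,v\}\in\bE$), one applies the right-exact functor $-\otimes_\Z\Z_p$ and uses the standard fact that a free Lie algebra base-changes to a free Lie algebra, $\Ll(\Z)\otimes_\Z\Z_p\simeq\Ll(\Z_p)$; by right-exactness of $-\otimes_\Z\Z_p$ on each presentation ${}_i\Rr\to{}_i\Ll\to\Ll(\Z,{}_iG)$ and $(\ast_\Z)(ii)$, the image of $\Gamma\Rr\otimes_\Z\Z_p$ in $\Ll(\Z_p)$ is precisely the $\Z_p$-Lie ideal defining the graph product of the base-changed family. (Equivalently, the graph product of Lie algebras is a colimit, and extension of scalars $\Z_p\otimes_\Z-$ is a left adjoint to restriction of scalars, hence preserves colimits.) Composing the three identifications yields the desired isomorphism $\omega_p\colon\Ll(\Z_p,\Gamma\widehat{G}^p)\simeq\big(\Gamma\Ll(\Z,G)\big)\otimes_\Z\Z_p$ with ${}_iX_j\mapsto{}_iX_j$.

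Finally, for torsion-freeness of $\Gamma\Ll(\Z,G)$ over $\Z$: by $\omega_p$ and the first step, $\big(\Gamma\Ll(\Z,G)\big)\otimes_\Z\Z_p$ is free over $\Z_p$ for every prime $p$. If $\Gamma\Ll(\Z,G)$ had a nonzero torsion element, it would contain one of order $p^a$ for some prime $p$ and some $a\ge1$; the cyclic subgroup it generates is isomorphic to $\Z/p^a\Z$, and since $\Z_p$ is flat over $\Z$ this subgroup injects into $\big(\Gamma\Ll(\Z,G)\big)\otimes_\Z\Z_p$, contradicting torsion-freeness of the latter. Hence $\Gamma\Ll(\Z,G)$ is torsion-free over $\Z$, which is the remaining assertion of the lemma.

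The only substantial input is the pro-$p$ result Theorem~\ref{main theo Zp}, which is already available; the rest — commuting the graph product with base change, and descending torsion-freeness from all the $\Z_p$ back to $\Z$ — is routine. The one point that needs care is the bookkeeping of the distinguished generators through every identification, so that $\omega_p$ is literally ${}_iX_j\mapsto{}_iX_j$; this matters because in the proof of Theorem~\ref{abstract computations} the isomorphism $\omega_p$ will be composed with the surjection $\alpha$ of Lemma~\ref{Gamma epi2}.
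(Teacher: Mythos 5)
Your proposal is correct and follows essentially the same route as the paper: apply the pro-$p$ result (Theorem~\ref{main theo Zp}) to $\widehat{G}^p$ via $(\ast_\Z)(i)$, identify $\Ll(\Z_p,\widehat{G}^p)$ with $\Ll(\Z,G)\otimes_\Z\Z_p$ via $(\ast_\Z)(ii)$, commute the graph product with $-\otimes_\Z\Z_p$ because the graph product is a colimit, and descend torsion-freeness from all the $\Z_p$ back to $\Z$. The paper is terser on the last two points (it simply cites the colimit structure and states the local-to-global torsion criterion), but the content is identical.
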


\begin{proof}
Since $\Gamma \Ll(\Z,G)$ is a colimit (see for instance \cite[Section $2$]{panovpseries}), we first have the isomorphism:
$$\left( \Gamma \Ll(\Z,G)\right)\otimes_\Z \Z_p \simeq \Gamma \left( \Ll(\Z,G)\otimes_\Z \Z_p\right),$$
where
$$\Ll(\Z,G)\otimes_\Z \Z_p\coloneq \{\Ll(\Z, {}_1G)\otimes_\Z \Z_p, \dots, \Ll(\Z, {}_kG)\otimes_\Z \Z_p\}\simeq \Ll(\Z_p, \widehat{G}^p),$$
and the last isomorphism is given by~$(\ast_\Z, ii)$.

Observe that $\Gamma \Ll(\Z,G)$ is torsion-free over~$\Z$, if and only if for every prime $p$, the~$\Z_p$-module~$\left( \Gamma \Ll(\Z,G)\right)\otimes_\Z \Z_p$ is torsion-free. We infer from the previous isomorphism and Theorem~\ref{main result} the following isomorphisms, defining~$\omega_p$:
 $$\Ll(\Z_p, \Gamma \widehat{G}^p)\simeq \Gamma \Ll(\Z_p, \widehat{G}^p)\simeq \Gamma \left( \Ll(\Z,G)\otimes_\Z \Z_p\right) \simeq  \left( \Gamma \Ll(\Z,G)\right)\otimes_\Z \Z_p.$$
Furthermore, from Theorem~\ref{main result}, the module $\Ll(\Z_p, \Gamma \widehat{G}^p)$ is torsion-free for every prime~$p$. Thus $\Ll(\Z,\Gamma G)$ is also torsion-free.
\end{proof}

We now prove Theorem~\ref{abstract computations}.

\begin{proof}[Proof Theorem \ref{abstract computations}]

We distinguish several cases:

$\bullet$ We start to show that $\Ll(\F_p, \Gamma G)\simeq \Gamma \Ll(\F_p,G)$ for a fixed prime $p$. We recall that, for every finitely generated abstract group $Q$, 
 we have $\Ll(\F_p,Q)\simeq \Ll(\F_p, \widehat{Q}^p)$.  
 Thus, from Theorem \ref{main result}, we have
$$\Ll(\F_p, \Gamma G)\simeq \Ll(\F_p, \widehat{\Gamma G}^p)\simeq \Ll(\F_p, \Gamma \widehat{G}^p)\simeq \Gamma \Ll(\F_p, \widehat{G}^p)\simeq \Gamma \Ll(\F_p, G).$$

$\bullet$ We now assume that~$G$ satisifes~$(\ast_\Z)$. We show that~$\alpha\colon \Gamma \Ll(\Z,G)\to \Ll(\Z, \Gamma G)$, defined by Lemma~\ref{Gamma epi2}, is an isomorphism.
For this purpose, we show that for every prime~$p$, the associated morphism:
$$\alpha_p\colon \left( \Gamma  \Ll(\Z,G)\right) \otimes_\Z \Z_p \to \Ll(\Z, \Gamma G)\otimes_\Z \Z_p$$
is an isomorphism. Since $\alpha$ is surjective, then $\alpha_p$ also and we notice that $\alpha_p$ maps~${}_iX_j$ to~${}_iX_j$ where $1\leq i\leq k$ and $1\leq j \leq {}_id$.
Recall that the morphism~$c_{\Gamma G}\colon \Gamma G \to \widehat{\Gamma G}^p$ defines a morphism:
$$c_{\Gamma G}(\Z_p) \colon \Ll(\Z,\Gamma G)\otimes_\Z \Z_p \to \Ll(\Z_p, \Gamma \widehat{G}^p);\quad {}_iX_j \mapsto {}_iX_j,$$
where $1\leq i \leq k$ and $1\leq j \leq {}_id$. 
From Lemma \ref{graph product Z}, we deduce the following chain of morphisms: 

\centering{
\begin{tikzcd}
                                                                                      &  & {\Gamma \Ll(\Z,G)\otimes_\Z \Z_p} \arrow[rr, "\alpha_p", two heads] &                                  & {\Ll(\Z,\Gamma G)\otimes_\Z \Z_p} \arrow[ld, "c_{\Gamma G}(\Z_p)"] \\
{\Ll(\Z_p,\Gamma \widehat{G}^p)} \arrow[rru, "\omega_p", two heads, hook] \arrow[rrr, "c_{\Gamma G}(\Z_p) \circ \alpha_p\circ \omega_p"] &  &                                                                     & {\Ll(\Z_p,\Gamma \widehat{G}^p)} &                                                            
\end{tikzcd}}

\justifying
For every $1\leq i\leq k$ and every $1\leq j \leq {}_id$, we have~$c_{\Gamma G}(\Z_p) \circ \alpha_p\circ \omega_p({}_iX_j)={}_iX_j$. Thus we deduce that $c_{\Gamma G}(\Z_p) \circ \alpha_p\circ \omega_p$ is the identity, and so an isomorphism. Since~$\omega_p$ is an isomorphism and~$\alpha_p$ is surjective, we deduce that~$\alpha_p$ is injective, so also an isomorphism. Thus~$c_{\Gamma G}(\Z_p)$ is an isomorphism. This allows us to conclude.
\end{proof}

\begin{rema}[Group algebras and filtrations]
Let~$p$ be a prime. Assume that~$Q$ is a finitely generated abstract group. Consider~$E(\mathbb{B},Q)$ the group algebra of~$Q$ over~$\mathbb{B}$. Let~$n$ be a positive integer, and denote by~$E_n(\mathbb{B},Q)$ the~$n$-th power of the augmentation ideal of~$E(\mathbb{B},Q)$. We define~$\E(\mathbb{B},Q)\coloneq \bigoplus_{n\in \NN} E_n(\mathbb{B},Q)/E_{n+1}(\mathbb{B},Q)$. We observe that~$\E(\F_p,Q)\simeq \E(\F_p, \widehat{Q}^p)$.

Let us also note that if $G$ is a family of abstract groups satisfying $(\ast_\Z)$, then~$\Ll(\Z,\Gamma G)$ is torsion-free over~$\Z$. Moreover its universal envelope is given by~$\E(\Z, \Gamma G)$. For further details, consult~\cite[Theorem~$1.3$]{hartlfox}. 
\end{rema}
\bibliography{bib}

\begin{thebibliography}{10}

\bibitem{bahri2010polyhedral}
A.~Bahri, M.~Bendersky, F.~Cohen, and S.~Gitler.
\newblock The polyhedral product functor: a method of decomposition for
  moment-angle complexes, arrangements and related spaces.
\newblock {\em Advances in Mathematics}, 225(3):1634--1668, 2010.

\bibitem{bartholdi2020right}
L.~Bartholdi, H.~H{\"a}rer, and T.~Schick.
\newblock Right \uppercase{A}ngled \uppercase{A}rtin \uppercase{G}roups and
  partial commutation, old and new.
\newblock {\em L’Enseignement Math{\'e}matique}, 66(1):33--61, 2020.

\bibitem{Baumslag}
G.~Baumslag.
\newblock On generalised free products.
\newblock {\em Math. Z.}, 78:423--438, 1962.

\bibitem{blumer2024quadratically}
S.~Blumer.
\newblock {\em On quadratically defined \uppercase{L}ie algebras and their
  subalgebras}.
\newblock PhD thesis, Universit{\`a} degli Studi di Milano-Bicocca, 2024.

\bibitem{blumer2023oriented}
S.~Blumer, C.~Quadrelli, and T.~Weigel.
\newblock Oriented right-angled \uppercase{A}rtin pro-$\ell$ groups and maximal
  pro-$\ell$ \uppercase{G}alois groups.
\newblock {\em International Mathematics Research Notices}, 2024(8):6790--6819,
  2024.

\bibitem{bourbaki2007groupes}
N.~Bourbaki.
\newblock {\em Groupes et alg{\`e}bres de \uppercase{L}ie: \uppercase{C}hapitre
  1}.
\newblock El{\'e}ments de math{\'e}matique. Springer Berlin Heidelberg, 2007.

\bibitem{panovbook2015}
V.~Buchstaber and T.E Panov.
\newblock {\em Toric topology}, volume 204.
\newblock American Mathematical Society, 2015.

\bibitem{charneysurvey}
R.~Charney.
\newblock An introduction to right-angled {Artin} groups.
\newblock {\em Geom. Dedicata}, 125:141--158, 2007.

\bibitem{DDMS}
J.D. Dixon, M.~Du~Sautoy, A.~Mann, and D.~Segal.
\newblock {\em Analytic pro-p groups}.
\newblock Number~61. Cambridge University Press, 2003.

\bibitem{Forre}
P.~Forr\'e.
\newblock Strongly free sequences and pro-{$p$}-groups of cohomological
  dimension 2.
\newblock {\em J. Reine Angew. Math.}, 658:173--192, 2011.

\bibitem{ruthgraph}
E.R. Green.
\newblock {\em Graph products of groups}.
\newblock PhD thesis, University of Leeds, 1990.

\bibitem{HAMZA2023172}
O.~Hamza.
\newblock Zassenhaus and lower central filtrations of pro-p groups considered
  as modules.
\newblock {\em Journal of Algebra}, 633:172--204, 2023.

\bibitem{Hamza25}
O.~Hamza.
\newblock On extensions of number fields with given quadratic algebras and
  cohomology.
\newblock {\em Manuscripta Math.}, 176(1):Paper No. 8, 2025.

\bibitem{hamza2025pythagorean}
O.~Hamza, C.~Maire, J.~Min{\'a}{\v c}, and N.D. Tân.
\newblock Maximal extensions of \uppercase{P}ythagorean fields and
  \uppercase{R}ight \uppercase{A}ngled \uppercase{A}rtin \uppercase{G}roups,
  2025.
\newblock arXiv 2510.11970.

\bibitem{hartlfox}
M.~Hartl.
\newblock On {Fox} and augmentation quotients of semidirect products.
\newblock {\em J. Algebra}, 324(12):3276--3307, 2010.

\bibitem{Labute1967}
J.~Labute.
\newblock Algèbres de {Lie} et pro-p-groupes définis par une seule relation.
\newblock {\em Inventiones mathematicae}, 4(2):142--158, 1967.

\bibitem{labutedescending}
J.~Labute.
\newblock On the descending central series of groups with a single defining
  relation.
\newblock {\em Journal of Algebra}, 14(1):16--23, 1970.

\bibitem{labute2006mild}
J.~Labute.
\newblock Mild pro-p-groups and {G}alois groups of $p$-extensions of
  $\mathbb{Q}$.
\newblock {\em Journal f{\"u}r die reine und angewandte Mathematik (Crelles
  Journal)}, 2006(596):155, 2006.

\bibitem{lazard1965groupes}
M.~Lazard.
\newblock Groupes analytiques $ p $-adiques.
\newblock {\em Publications Math{\'e}matiques de l'IH{\'E}S}, 26:5--219, 1965.

\bibitem{Lemaire}
J-M Lemaire.
\newblock {\em Algèbres connexes et espaces de lacets}, volume 422 of {\em
  Lecture Notes in Mathematics}.
\newblock Springer-Verlag, Berlin, 1974.

\bibitem{leoni2024zassenhaus}
G.~Leoni.
\newblock {\em The \uppercase{Z}assenhaus $p$-restricted \uppercase{L}ie
  algebra functor}.
\newblock PhD thesis, Universita degli Studi di Milano-Bicocca, 2024.

\bibitem{Lichtman}
A.I. Lichtman.
\newblock On {L}ie algebras of free products of groups.
\newblock {\em J. Pure Appl. Algebra}, 18(1):67--74, 1980.

\bibitem{Loday}
J.-L. Loday and B.~Vallette.
\newblock {\em Algebraic operads}, volume 346 of {\em Grundlehren der
  mathematischen Wissenschaften [Fundamental Principles of Mathematical
  Sciences]}.
\newblock Springer, Heidelberg, 2012.

\bibitem{LORENSEN20106}
K.~Lorensen.
\newblock Groups with the same cohomology as their pro-$p$ completions.
\newblock {\em Journal of Pure and Applied Algebra}, 214(1):6--14, 2010.

\bibitem{WeigelRiley}
E.~Marmo, D.~Riley, and T.~Weigel.
\newblock {P}resenting the {Z}assenhaus {L}ie algebra by the {M}agnus {L}ie
  algebra, 2025.
\newblock arXiv 2511.13379.

\bibitem{minac2021koszul}
J.~Min{\'a}{\v{c}}, F.~Pasini, C.~Quadrelli, and N.D. T{\^a}n.
\newblock Koszul algebras and quadratic duals in \uppercase{G}alois cohomology.
\newblock {\em Advances in Mathematics}, 380:107569, 2021.

\bibitem{MRT}
J.~Min{\'a}{\v c}, M.~Rogelstad, and N.D. T{\^a}n.
\newblock Dimensions of {Z}assenhaus filtration subquotients of some
  pro-{$p$}-groups.
\newblock {\em Israel J. Math.}, 212(2):825--855, 2016.

\bibitem{NSW}
J.~Neukirch, A.~Schmidt, and K.~Wingberg.
\newblock {\em Cohomology of number fields}, volume 323 of {\em Grundlehren der
  mathematischen Wissenschaften [Fundamental Principles of Mathematical
  Sciences]}.
\newblock Springer-Verlag, Berlin, second edition, 2008.

\bibitem{panovpseries}
T.E. Panov and T.A. Rahmatullaev.
\newblock Polyhedral products, graph products, and {{\(p\)}}-central series.
\newblock {\em Proc. Steklov Inst. Math.}, 326:269--285, 2024.

\bibitem{polishchuk2005quadratic}
A.~Polishchuk and L.~Positselski.
\newblock {\em Quadratic Algebras}.
\newblock University lecture series. American Mathematical Society, 2005.

\bibitem{QUILLEN1968411}
D.G. Quillen.
\newblock On the associated graded ring of a group ring.
\newblock {\em Journal of Algebra}, 10(4):411--418, 1968.

\bibitem{RibesZal}
L.~Ribes and P.~Zalesskii.
\newblock {\em Profinite groups}, volume~40 of {\em Ergebnisse der Mathematik
  und ihrer Grenzgebiete. 3. Folge. A Series of Modern Surveys in Mathematics
  [Results in Mathematics and Related Areas. 3rd Series. A Series of Modern
  Surveys in Mathematics]}.
\newblock Springer-Verlag, Berlin, 2000.

\bibitem{Smel}
A.L. Shmel'kin.
\newblock The lower central series of a free product of groups.
\newblock {\em Algebra i Logika}, 8:129--137, 1969.

\bibitem{snopce2022right}
I.~Snopce and P.~Zalesskii.
\newblock Right-angled \uppercase{A}rtin pro-$p$ groups.
\newblock {\em Bulletin of the London Mathematical Society}, 54(5):1904--1922,
  2022.

\bibitem{veryovkincoxeter}
Y.A. Veryovkin.
\newblock The associated {Lie} algebra of a right-angled {Coxeter} group.
\newblock {\em Proc. Steklov Inst. Math.}, 305:53--62, 2019.

\bibitem{wade2016lower}
R.D. Wade.
\newblock The lower central series of a right-angled \uppercase{A}rtin group.
\newblock {\em L’Enseignement Math{\'e}matique}, 61(3):343--371, 2016.

\bibitem{weigel652koszul}
T.~Weigel.
\newblock Koszul \uppercase{L}ie algebras, \uppercase{L}ie algebras and related
  topics (\uppercase{M}. \uppercase{A}vitabile, \uppercase{J}.
  \uppercase{F}eldvoss, and \uppercase{T}. \uppercase{W}eigel, eds.).
\newblock {\em Contemporary Mathematics}, 652:254--255.

\bibitem{weigel2015graded}
T.~Weigel.
\newblock Graded \uppercase{L}ie algebras of type \uppercase{F}\uppercase{P}.
\newblock {\em Israel Journal of Mathematics}, 205(1):185--209, 2015.

\end{thebibliography}
\bibliographystyle{plain}
\end{document}